\documentclass[11pt,oneside,reqno]{article}

\usepackage{amsmath,amsthm,amssymb,color,bm}

\usepackage{array}
\setlength\extrarowheight{3pt}

\usepackage[margin=2.5cm,a4paper]{geometry}
\usepackage{bbm}
\usepackage{paralist}
\usepackage{mathrsfs}
\usepackage[breaklinks=true]{hyperref}
\usepackage{color}

\usepackage[square]{natbib} 

\renewcommand{\cite}{\citep*}

\numberwithin{equation}{section}
\allowdisplaybreaks[4]

\theoremstyle{plain}
\newtheorem{theorem}{Theorem}[section]

\newtheorem{lemma}[theorem]{Lemma}

\theoremstyle{definition}

\newtheorem{remark}[theorem]{Remark}

\makeatletter

\edef\-#1{\noexpand\ifmmode {\noexpand\bar{#1}} \noexpand\else \-#1\noexpand\fi}

\renewcommand{\phi}{\varphi}
\newcommand{\eps}{\varepsilon}
\newcommand{\eq}{\eqref}

\newcommand{\dtv}{\mathop{d_{\mathrm{TV}}}}

\newcommand{\bigo}{\mathrm{O}}

\def\tsfrac#1#2{{\textstyle\frac{#1}{#2}}}

\newcommand{\I}{\mathrm{I}}

\newcommand{\MN}{\mathrm{MN}}

\newcommand{\IE}{\mathbbm{E}}
\newcommand{\IP}{\mathbbm{P}}

\newcommand{\Vol}{\mathop{\mathrm{Vol}}}
\newcommand{\e}{{\mathrm{e}}}

\newcommand{\law}{\mathscr{L}}
\newcommand{\eqlaw}{\stackrel{\mathscr{D}}{=}}

\newcommand{\IZ}{\mathbbm{Z}}
\newcommand{\IR}{\mathbbm{R}}

\newcommand{\Id}{\mathrm{Id}}

\def\be#1{\begin{equation*}#1\end{equation*}}
\def\ben#1{\begin{equation}#1\end{equation}}
\def\bes#1{\begin{equation*}\begin{split}#1\end{split}\end{equation*}}
\def\besn#1{\begin{equation}\begin{split}#1\end{split}\end{equation}}
\def\bg#1{\begin{gather*}#1\end{gather*}}

\def\ba#1{\begin{align*}#1\end{align*}}
\def\ban#1{\begin{align}#1\end{align}}

\def\klr#1{(#1)}
\def\bklr#1{\bigl(#1\bigr)}
\def\bbklr#1{\Bigl(#1\Bigr)}
\def\bbbklr#1{\biggl(#1\biggr)}

\def\bkle#1{\bigl[#1\bigr]}

\def\klg#1{\{#1\}}
\def\bklg#1{\bigl\{#1\bigr\}}

\def\norm#1{\Vert#1\Vert}

\def\bbnorm#1{\Bigl\Vert#1\Bigr\Vert}
\def\bbbnorm#1{\biggl\Vert#1\biggr\Vert}

\def\abs#1{\vert#1\vert}
\def\babs#1{\bigl\vert#1\bigr\vert}
\def\bbabs#1{\Bigl\vert#1\Bigr\vert}
\def\bbbabs#1{\biggl\vert#1\biggr\vert}
\def\bbbbabs#1{\Biggl\vert#1\Biggr\vert}

\def\bmid{\big\vert}

\renewcommand\section{\@startsection {section}{1}{\z@}%
{-3.5ex \@plus -1ex \@minus -.2ex}%
{1.3ex \@plus.2ex}%
{\center\small\sc\mathversion{bold}\MakeUppercase}}

\def\subsection#1{\@startsection {subsection}{2}{0pt}%
{-3.5ex \@plus -1ex \@minus -.2ex}%
{1ex \@plus.2ex}%
{\bf\mathversion{bold}}{#1}}

\def\subsubsection#1{\@startsection{subsubsection}{3}{0pt}%
{\medskipamount}%
{-10pt}%
{\normalsize\itshape}{\kern-2.2ex. #1.}}

\def\blfootnote{\xdef\@thefnmark{}\@footnotetext}

\makeatother

\def\s#1{^{(#1)}}

\newcommand\Dir{{\rm{Dir}}}
\newcommand\ta{\textnormal{\textbf{a}}}

\newcommand\tX{\textnormal{\textbf{X}}}
\newcommand\tZ{\textnormal{\textbf{Z}}}
\newcommand\tR{\textnormal{\textbf{R}}}
\newcommand\tx{\textnormal{\textbf{x}}}

\newcommand\te{\textnormal{\textbf{e}}}
\newcommand\tN{\textnormal{\textbf{N}}}
\newcommand\tB{\textnormal{\textbf{B}}}
\newcommand\ty{\textnormal{\textbf{y}}}
\newcommand\tq{\textnormal{\textbf{q}}}
\newcommand\tV{\textnormal{\textbf{V}}}

\newcommand{\teps}{\boldsymbol{\eps}}
\newcommand{\simp}{\Delta_{K}}
\def\cC{\mathcal{C}}

\def\Zx{\tZ_\tx}
\def\Zxt{\tZ_\tx(t)}

\def\e{\eps}
\def\tM{\textnormal{\textbf{M}}}
\def\tD{\textnormal{\textbf{D}}}
\def\tW{\textnormal{\textbf{W}}}
\def\tw{\textnormal{\textbf{w}}}
\def\tY{\textnormal{\textbf{Y}}}
\def\d{\delta}
\def\Bin{\text{Bin}}

\begin{document}

\title{\sc\bf\large\MakeUppercase{Dirichlet approximation of equilibrium distributions in Cannings models with mutation}}
\author{\sc Han~L.~Gan, Adrian R\"ollin, and Nathan Ross}
\date{\it Washington University in St.\ Louis, National University of Singapore, and University of Melbourne}
\maketitle

\begin{abstract} 
Consider a haploid population of fixed finite size with a finite number of allele types and having Cannings exchangeable genealogy with neutral mutation. The stationary distribution of the Markov chain of allele counts in each generation is an important quantity in population genetics but has no tractable description in general.
We provide upper bounds on the distributional distance between the Dirichlet distribution and this finite population stationary distribution for the Wright-Fisher genealogy with general mutation structure and the Cannings exchangeable  genealogy with parent independent mutation structure. In the first case, the bound is small if the population is large and the mutations do not depend too much on parent type; ``too much" is naturally quantified by our bound. In the second case, the bound is small if the population is large and the chance of three-mergers in the Cannings genealogy is small relative to the chance of two-mergers; this is the same condition to ensure convergence of the genealogy to Kingman's coalescent. These results follow from a new development of Stein's method for the Dirichlet distribution based on Barbour's generator approach and a probabilistic description of the semigroup of the Wright-Fisher diffusion due to Griffiths, and Li and Tavar\'e.
\end{abstract}


\section{Introduction}

We consider a neutral Cannings model with mutation in a haploid population of constant size~$N$ with~$K$ alleles. In each generation every individual has a random number of offspring such that the total number of offspring is~$N$. Different generations have i.i.d.\ offspring count vectors with distribution given by an exchangeable vector~$\tV:=(V_1, \ldots, V_N)$ not identically equal to~$(1, \ldots, 1)$;~$V_i$ is the number of offspring of individual~$1\leq i \leq N$. The random genealogy induced from this description is referred to as the \emph{Cannings model} \cite{Cannings1974}; particular instances are the \emph{Wright-Fisher model}, where~$\law(\tV)$ is multinomial with~$N$ trials and probabilities~$(1/N, \ldots, 1/N)$, and the \emph{Moran model}, where~$\law(\tV)$ is described by choosing a uniform pair of indices~$(I,J)$ and setting~$V_I=2$,~$V_J=0$ and~$V_i=1$ for~$i\not=I, J$. On top of the random genealogy given by~$\law(\tV)$, we put a mutation structure as follows. Given a child's parent type is~$i$, the child is of type~$j$ with probability~$p_{i j}$, where~$\sum_{j} p_{i j} = 1$. The type of each child in a given generation is chosen independently conditional on the genealogy of that generation and the parent's type. It is easy to see that this rule induces a time-homogeneous Markov chain~$(\tX(0), \tX(1), \ldots)$ with state space~$\{\tx\in\IZ_{\geq0}^{K-1}: \sum_{i=1}^{K-1} x_i\leq N\},$ where for~$i=1,\ldots, K-1$,~$X_i(n)$ is the number of individuals in the population having allele~$i$ at time~$n$; note that the count for allele~$K$ is given by~$N-\sum_{i=1}^{K-1} X_i(n)$.

Since~$\tX(n)$ is a Markov chain on a finite state space, it has a stationary distribution. But it is typically not possible to write down an expression for such a stationary distribution --- an important exception is the Wright-Fisher model with parent independent mutation (PIM), meaning~$p_{i j}$ does not depend on~$i$ for~$j\not=i$. In general, if the population size~$N\to\infty$, then under some weak conditions \cite{Mohle2000} (discussed in more detail below in Remark~\ref{rem1}) the Cannings genealogy viewed backwards in time converges to Kingman's coalescent \cite{Kingman1982, Kingman1982b, Kingman1982a} and the mutation structure on top of the coalescent has a nice Poisson process description. But even in this limit the stationary distribution (of now proportions of the~$K$ alleles)
is notoriously difficult to handle outside of the PIM case; see \cite{Griffiths1994} \cite{Bhaskar2012} for work on sampling under the stationary distributions and \cite{Ethier1992} for a probabilistic construction. Even if a formula in the limit were available, it is in any case important in population-mutation models to understand the difference between finite~$N$  likelihoods and those in the~$N\to\infty$ limit \cite{Bhaskar2014} \cite{Fu2006} \cite{Lessard2007, Lessard2010}  \cite{Mohle2004}.

Our approach to understanding these finite population stationary distributions is to determine when they are close to the Dirichlet distribution, which arises as the stationary limit in the PIM case (in this case the process converges in a suitable sense to the Wright-Fisher diffusion). 

In the next section we present our main results. First, we give two approximation theorems providing upper bounds on the distributional distance between the Dirichlet distribution and, for the first result, the finite population stationary distribution for the Wright-Fisher genealogy with general mutation structure and, for the second result, the Cannings exchangeable genealogy with parent independent mutation structure. Second, we discuss a new development of Stein's method for the Dirichlet distribution from which the first two results follow. 

\section{Main results}

Before stating our main results, we need some notation and definitions, as well as a short discussion regarding Lipschitz functions defined on open convex sets and their extension to the boundary.

Denote by~$\Dir(\ta)$ the Dirichlet distribution with parameters~$\ta=(a_1,\dots,a_K)$, where~$a_1>0, \ldots, a_K>0$, supported on the~$(K-1)$-dimensional open simplex, which we parameterize as
\be{
	\simp=\left\{\tx=(x_1,\ldots, x_{K-1}): x_1> 0, \ldots, x_{K-1} > 0,  \sum_{i=1}^{K-1} x_i < 1\right\}\subset \IR^{K-1}.
}
Denote by~$\-\Delta_K$ the closure of~$\Delta_K$.
On~$\simp$,~$\Dir(\ta)$ has density
\ben{\label{1}
	\psi_\ta(x_1,\dots,x_{K-1}) = \frac{\Gamma(s)}{\prod_{i=1}^K \Gamma(a_i)} \prod_{i=1}^K x_i^{a_i-1}, 
}
where~$s=\sum_{i=1}^K a_i$, and where we set~$x_K=1-\sum_{i=1}^{K-1} x_i$, as we shall often do in this paper whenever considering vectors taking values in~$\Delta_{K}$. 

\def\BC{\mathrm{BC}}
\def\CL{C_{\mathrm{L}}}
\def\Cb{C_{\mathrm{b}}}
Let~$U$ be an open subset of~$\IR^n$. For~$m\geq 1$, we denote by~$\BC^{m,1}(U)$ the set of bounded functions~$g:U\to\IR$ that have~$m$ bounded and continuous partial derivatives and whose~$m$-th partial derivatives are Lipschitz continuous. In line with this notation, we denote by~$\BC^{0,1}(U)$ the set of bounded functions that are Lipschitz continuous. 
We denote by~$\norm{g}_\infty$ the supremum norm of~$g$, and, if the~$k$-th partial derivatives of~$g$ exist, we let
\be{
	\abs{g}_k=\sup_{1\leq i_1,\dots,i_k\leq n} \bbnorm{\frac{\partial^k g}{\partial x_{i_1}\cdots\partial x_{i_k}}}_\infty
}
and 
\be{ 
\abs{g}_{k,1} = 
\sup_{1\leq i_1,\dots,i_k\leq n}\sup_{\tx,\ty\in U}
\bbbabs{
	\frac{\partial^k\bklr{g(\tx)-g(\ty)}}{\partial x_{i_1}\cdots\partial x_{i_k}}}\frac{1}{\norm{\tx-\ty}_1}.
}
Note that we use the~$L_1$-norm in our definition of the Lipschitz constant instead of the usual~$L_2$-norm. This is purely a matter of convenience, since the~$L_1$-norm shows up naturally in our proofs.

If~$g\in \BC^{m,1}(U)$ and~$U$ is convex, then all partial derivatives up to order~$m-1$ are Lipschitz continuous, too, and for any~$0\leq k \leq m-1$,
\ben{\label{2}
	\abs{g}_{k,1}	= \abs{g}_{k+1}.
}
As a result, if~$U$ is an open convex set, then any function~$g\in \BC^{m,1}(U)$ and all its partial derivatives up to order~$m$ can be extended continuously to a function~$\-g$ defined on the closure~$\-U$ in a unique way, and we have~$\norm{\-g}_\infty=\norm{g}_\infty$,~$\abs{\-g}_k=\abs{g}_k$ for~$1\leq k\leq m$ and~$\abs{\-g}_{m,1}=\abs{g}_{m,1}$. We can therefore identify the set of functions~$\BC^{m,1}(U)$ with set of  extended functions~$\BC^{m,1}(\-U)$.

\subsection{Wright-Fisher model with general mutation structure}

Our first result is a bound on the approximation of the stationary distribution
of the Wright-Fisher model with general mutation structure by
a Dirichlet distribution.

\begin{theorem}\label{THM1}
Let the~$(K-1)$-dimensional vector\/~$\tX$ be distributed as a stationary distribution of the Wright-Fisher model for a population of~$N$ haploid individuals with~$K$ types and mutation
structure~$p_{i j}$,~$1\leq i,j\leq K$; set~$\tW=\tX/N$. Let\/~$\ta$ be a~$K$-vector of positive numbers, set~$s=\sum_i a_i$, and let\/~$\tZ\sim \Dir(\ta)$. Then, for any~$h\in \BC^{2,1}(\-\Delta_K)$,
\be{
\left|\IE h(\tW)-\IE h(\tZ) \right| \leq \frac{\abs{h}_1}{s} A_1+\frac{\abs{h}_2}{2(s+1)} A_2 +  \frac{\abs{h}_{2,1}}{18(s+2)} A_3,
}
where
\bg{
A_1 =  2N(K+1)\tau,\quad
A_2= N K^2 \mu^2 + 2K\mu, \quad
A_3= 8 N K^3 \mu^3 +\frac{16\sqrt{2}K^3}{N^{1/2}},
}
with
\ben{
	\tau = \sum_{i=1}^K\sum_{\substack{j=1\\j\neq i}}^K\,\bbabs{\,p_{ij}-\frac{a_j}{2N}},\qquad \mu = \sum_{i=1}^K\sum_{\substack{j=1\\j\neq i}}^K p_{ij}.
}
Moreover, there is a constant~$C=C(\ta)$ such that
\be{
	\sup_{A\in \cC_{K-1}}\babs{\IP[\tW\in A] - \IP[\tZ\in A]}\leq C\bklr{A_1+A_2+A_3}^{\theta/(3+\theta)},
}
where~$\cC_{K-1}$ is the family of convex sets on\/~$\IR^{K-1}$ and where 
$\theta=\theta(\ta)>0$ is given at~\eq{10}.
\end{theorem}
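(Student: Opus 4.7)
The plan is to prove Theorem~\ref{THM1} by Stein's method in Barbour's generator form. The target $\Dir(\ta)$ is the reversible stationary distribution of the $K$-type Wright--Fisher diffusion on $\-\simp$, whose generator acts on smooth $f$ by
\be{
\mathcal{A} f(\tx) = \frac{1}{2}\sum_{i=1}^{K-1}(a_i - s x_i)\partial_i f(\tx) + \frac{1}{2}\sum_{i,j=1}^{K-1}\klr{x_i\delta_{ij} - x_i x_j}\partial_{ij}f(\tx).
}
The first task is to solve, for each $h\in\BC^{2,1}(\-\simp)$, the Stein equation $\mathcal{A}f_h = h-\IE h(\tZ)$ together with smoothing estimates $\abs{f_h}_k\leq\abs{h}_k/\alpha_k$ for $k=1,2$ and $\abs{f_h}_{2,1}\leq\abs{h}_{2,1}/\alpha_3$ with $\alpha_k\asymp s+k-1$ (the precise constants $s$, $2(s+1)$, $18(s+2)$ in the theorem statement then absorb further numerical factors from the Taylor expansion below). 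I would obtain these by writing $f_h = -\int_0^\infty(T_t h - \IE h(\tZ))\,dt$ for the Wright--Fisher semigroup $T_t$ and exploiting the probabilistic representation of Griffiths and of Li--Tavar\'e, in which $T_t h(\tx)$ is a weighted mixture over the number of surviving ancestors in the Kingman coalescent started from $\infty$ lineages with $\ta$-governed mutation; these weights decay geometrically at rate $\binom{s+k-1}{2}$ when bounding the $k$-th derivative, producing the required $s+k-1$ denominators.

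Given the Stein equation, stationarity of $\tW$ yields
\be{
\IE h(\tW) - \IE h(\tZ) = \IE\bkle{\mathcal{A}f_h(\tW) - (Pf_h(\tW) - f_h(\tW))},
}
where $P$ is the one-step transition operator of the chain. I would Taylor expand $Pf_h(\tw)-f_h(\tw)$ to second order in $\tw$, bounding the cubic remainder by $\abs{f_h}_{2,1}\cdot\IE\bkle{\norm{\tW(1)-\tw}_1^3\mid\tw}$; standard multinomial third-moment estimates give the latter as $O(N^{-3/2})$ per index, producing the $K^3/N^{1/2}$ piece of $A_3$ after summation. Given $\tW=\tw$, $N\tW(1)$ is multinomial with probabilities $q_i(\tw):=\sum_{j=1}^K w_jp_{ji}$, so the first two conditional moments of the increment are explicit: $\IE\kle{\Delta W_i\mid\tw}=q_i(\tw)-w_i$ and $N\Cov\kle{\Delta W_i,\Delta W_j\mid\tw}=q_i(\tw)\delta_{ij}-q_i(\tw)q_j(\tw)$. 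Matching term by term against the coefficients of $\mathcal{A}f_h(\tw)$: the first-order residual $(q_i(\tw)-w_i)-\frac{1}{2N}(a_i-sw_i)$, summed over $i$, is bounded in $\ell^1$ by $\tau$, producing $A_1$; the second-order residual between $\frac{1}{N}\bklr{q_i(\tw)\delta_{ij}-q_i(\tw)q_j(\tw)}$ and $\frac{1}{N}\bklr{w_i\delta_{ij}-w_iw_j}$ is entrywise $O(\mu)$, producing $A_2$; the cubic-in-mutation contribution produces the $NK^3\mu^3$ piece of $A_3$.

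For the convex-set bound I would use a standard smoothing argument on the simplex. Fix $\d>0$ and for $A\in\cC_{K-1}$ construct $h_\d\in\BC^{2,1}(\-\simp)$ satisfying $\II_{A^{-\d}}\leq h_\d\leq\II_{A^{+\d}}$ with $\abs{h_\d}_k\leq C_k\d^{-k}$ for $k=1,2$ and $\abs{h_\d}_{2,1}\leq C_3\d^{-3}$, by convolving $\II_A$ with a $\d$-scaled smooth bump (composed with a bi-Lipschitz chart near $\partial\simp$). Applying the smooth-function bound to $h_\d$ gives
\be{
\abs{\IP[\tW\in A]-\IP[\tZ\in A]} \leq C(A_1+A_2+A_3)\d^{-3} + \IP[\tZ\in A^{+\d}\setminus A^{-\d}],
}
and a Dirichlet anti-concentration estimate $\IP[\tZ\in A^{+\d}\setminus A^{-\d}]\leq C\d^\theta$ uniformly over convex $A$ closes the argument, where $\theta=\theta(\ta)>0$ is the H\"older exponent at~\eq{10} encoding the boundary integrability of the Dirichlet density ($\theta$ depends on $\ta$ because the density blows up on faces of $\simp$ when some $a_i<1$). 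Optimizing at $\d=(A_1+A_2+A_3)^{1/(3+\theta)}$ produces the stated exponent $\theta/(3+\theta)$.

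The main technical obstacle is the Stein step: establishing $\BC^{2,1}$-regularity for $f_h$ uniformly up to $\partial\simp$ requires careful analysis via the Griffiths/Li--Tavar\'e representation, because the diffusion coefficient of $\mathcal{A}$ degenerates on the boundary and naive semigroup gradient estimates on a bounded domain would not yield bounded third-order derivatives with the right $s$-dependence. Once these Stein bounds are in place, the generator matching (explicit multinomial algebra with one-step moments) and the convex-set smoothing are comparatively routine.
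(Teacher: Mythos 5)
Your proposal is essentially the same argument as the paper's. The paper organizes the generator comparison as a general exchangeable-pairs theorem (Theorem~\ref{THM3}), which it then applies by taking $\tW'$ to be one step of the chain from the stationary $\tW$ with $\Lambda=(2N)^{-1}\Id$; since $\law(\tW')=\law(\tW)$ is exactly $\IE[(Pf-f)(\tW)]=0$, this is the same computation you describe as a direct Taylor expansion of $Pf_h-f_h$ against $\mathcal{A}f_h$. All the remaining ingredients match the paper: the bounds on $f_h$ with denominators $k(s+k-1)$ come from the Griffiths/Li--Tavar\'e death-process representation of $T_t$ (Theorem~\ref{THM5}); the multinomial conditional moment matching against $q_i(\tw)=\sum_k w_k p_{ki}$ is carried out in Lemmas~\ref{lem11}--\ref{lem14}; and the convex-set bound uses Bentkus smoothing plus a two-scale Dirichlet anti-concentration estimate (Lemmas~\ref{lem6}--\ref{lem9}), with exactly the optimization $\d=(A_1+A_2+A_3)^{1/(3+\theta)}$ you propose. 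Two small inaccuracies worth flagging: the paper's smoothing construction (Lemma~\ref{lem7}) works directly in $\IR^{K-1}$ via an iterated averaging operator, so no boundary chart for $\partial\simp$ is needed; and the exponent $\theta$ is not a clean anti-concentration exponent for a single scale $\d$ --- the paper gets it from a two-scale argument separating a $\delta$-collar of $\partial\-\Delta_{K-1}$ (where the Dirichlet density may blow up) from the interior, then balancing $\delta^{\theta_\wedge}$ against $\eps(\delta-\eps)^{-\theta_\circ}$. Neither point changes the structure of the proof.
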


\begin{remark}
To interpret the bounds of the theorem, if
$p_{i j}=\frac{a_j}{2N}+\eps_{ij}$ for~$i\not=j$, and we assume~$\abs{\eps_{ij}}\leq \eps$, then
\ba{
\tau \leq K(K-1)\eps
\qquad
\mu \leq \frac{(K-1)s}{2N}+K(K-1)\eps,
}
so that for fixed~$K$ and~$\ta$ (though note that, for smooth functions, the reliance on these parameters is explicit),
\be{
A_1 = \bigo(N\eps), \qquad A_2 = \bigo(N^{-1}+N \eps^2), \qquad A_3 = \bigo(N^{-1/2}+N \eps^3).
}
In particular, in the PIM case, where~$\eps=0$, our bound on smooth functions is of order~$N^{-1/2}$, and for the convex set metric of order~$N^{-1/8}$ if~$\min\{a_1,\dots,a_K\}\geq 1$ and otherwise the order of the bound is some negative power of~$N$ having
a more complicated relationship to~$\ta$, but which is easily read from~\eq{10}.
In the special case where~$K=2$,~$\eps=0$ and~$h$ has six bounded derivatives, \cite{Ethier1977} derived a bound analogous to that of Theorem~\ref{THM1}, but of order
$N^{-1}$.
In the general case our bound quantifies the effect of non-PIM: if
$N\eps \to 0$ as~$N\to\infty$ then the stationary distribution converges to the Dirichlet distribution.
\end{remark}

\subsection{Cannings model with parent-independent mutation structure}

Our next result is for the general Cannings exchangeable non-degenerate genealogy. The bounds
are in terms of the moments of the offspring vector~$\tV$; hence, let
\ban{\label{3}
	\alpha:=\IE\klg{V_1 (V_1-1)},
	\quad
	\beta:=\IE\klg{ V_1 (V_1-1)(V_1-2)},
	\quad
 	\gamma:=\IE \klg{V_1 (V_1-1)V_2(V_2-1)}  
}
(and note that these quantities depend on~$N$).

\begin{theorem}\label{THM2}
Let the $(K-1)$-dimensional vector~$\tX$ be a stationary distribution of the Cannings model for a population of size~$N\geq 4$ with non-degenerate exchangeable genealogy~$\law(\tV)$. Assume we have
parent independent mutation structure; that is,~$p_{ij}=\pi_j$,~$1\leq i\not= j \leq K$,~ for some~$\pi_1,\dots,\pi_K>0$,
and $p_{ii}=1-\sum_{j\not=i} \pi_j$.
Let~$\alpha$,~$\beta$, and~$\gamma$ be as defined at~\eq{3},
and for~$i=1,\ldots, K$, set~$a_i=\frac{2(N-1)\pi_i}{\alpha}$ and~$s=\sum_i a_i$. Let
$\tW=\tX/N$, and let~$\tZ\sim \Dir(\ta)$. Then, for any~$h\in\BC^{2,1}(\-\Delta_K)$,
\be{
\left|\IE h(\tW)-\IE h(\tZ) \right| \leq \frac{\abs{h}_2}{2(s+1)} A_2 +  \frac{\abs{h}_{2,1}}{18(s+2)} A_3,
}
where, with~$\eta=N\alpha^{-1}\sum_{j=1}^K\pi_j=\frac{sN}{2(N-1)}$,
\ba{
A_2&= \bbklr{\frac{\alpha}{N}}^2\eta^2 K^2 + \frac{\alpha}{N}\bbklr{\eta^2 (K^2+1)  + 2\eta K^2} + \frac{3\eta K}{N} ,\\
A_3	&= 2K^3\left(1+\eta\sqrt{\frac{\alpha}{N}}+\sqrt{\frac{\eta}{N}} \right)\left( 
	\eta\bbbklr{\frac{\alpha}{N}}^{3/4} 
	+ \left( \frac{12\beta}{\alpha N}+ \frac{ 24 \gamma}{\alpha N}\right)^{1/4} 
	+\frac{1}{N^{1/2}}\left(3\eta^2\frac{\alpha}{N} + \frac{\eta}{N} \right)^{1/4} 
	 \right)^2.
}
Moreover, there is a constant~$C=C(\ta)$ such that
\ben{
	\sup_{A\in \cC_{K-1}}\babs{\IP[\tW\in A] - \IP[\tZ\in A]}\leq C\bklr{A_2+A_3}^{\theta/(3+\theta)},
}
where~$\cC_{K-1}$ is the family of convex sets on\/~$\IR^{K-1}$ and where 
$\theta=\theta(\ta)>0$ is given at~\eq{10}.
\end{theorem}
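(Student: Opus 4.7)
The proof follows the Stein's method framework developed earlier in the paper, with the~$K$-type Wright-Fisher diffusion as reference dynamics. Since~$\Dir(\ta)$ is stationary for this diffusion, its infinitesimal generator~$\mathcal{A}$ serves as a Stein operator: for each~$h\in \BC^{2,1}(\-\Delta_K)$ the Stein solution~$f_h=-\int_0^\infty(P_th-\IE h(\tZ))\,dt$ to~$\mathcal{A}f_h=h-\IE h(\tZ)$ exists and its derivatives are controlled by those of~$h$. Combining the probabilistic construction of the semigroup~$(P_t)$ due to Griffiths and Li--Tavar\'e with the derivative estimates developed in the Stein's-method section yields bounds roughly of the form~$\abs{f_h}_k \lesssim \abs{h}_k/(s+k-1)$ for~$k=1,2$ together with an analogous Lipschitz bound on the second derivatives; these are responsible for the factors~$(s+1)^{-1}$ and~$(s+2)^{-1}$ appearing in the statement.

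With~$C_N:=(N-1)/\alpha$ and~$\tW'$ the state in the generation after~$\tW$, stationarity of~$\tW$ gives~$\IE\bkle{f_h(\tW')-f_h(\tW)}=0$, so
\be{
\IE h(\tW)-\IE h(\tZ)=\IE\bbkle{\mathcal{A}f_h(\tW)-C_N\IE[f_h(\tW')-f_h(\tW)\mid\tW]}.
}
A second-order Taylor expansion of~$f_h(\tW')$ around~$\tW$ with Lipschitz remainder reduces the right-hand side to three ingredients: the conditional first and second moments of~$\Delta:=\tW'-\tW$, and the third absolute moment~$\IE[\norm{\Delta}_1^3\mid\tW]$. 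Using exchangeability of~$\tV$ and PIM, I would verify that~$\IE[\Delta_i\mid\tw]=\pi_i-w_i\sum_k\pi_k$ exactly, and that~$C_N$ times this equals the drift~$\tfrac{1}{2}(a_i-sw_i)$ of~$\mathcal{A}$ precisely under the choice~$a_i=2(N-1)\pi_i/\alpha$; this perfect drift match is the reason no first-moment term~$A_1$ appears (contrast Theorem~\ref{THM1}). The covariance~$\IE[\Delta_i\Delta_j\mid\tw]$ has principal term~$\tfrac{\alpha}{N-1}w_i(\delta_{ij}-w_j)$, again matching~$\mathcal{A}$ through~$C_N$; the corrections are computable from the first two moments of~$\tV$ and from PIM, and have sizes of order~$\alpha/N$,~$(\alpha/N)^2$ and~$1/N$, which combined with the weight~$\abs{f_h}_2/(s+1)$ and a~$K^2$ factor from summing over allele indices yield~$A_2$.

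The main obstacle is estimating the third-moment remainder~$\IE[\norm{\Delta}_1^3\mid\tw]$, which is the source of the assumption that three-mergers are rare relative to two-mergers. Using exchangeability of~$\tV$, I would decompose this moment into contributions from three offspring of a single parent (bounded by~$\beta$), from two pairs of siblings from two distinct parents (bounded by~$\gamma$), and from lower-order pieces controlled by~$\alpha$ and~$\eta$. H\"older-type inequalities combined with the Lipschitz bound on the second derivatives of~$f_h$ then produce~$A_3$; the factor~$(1+\eta\sqrt{\alpha/N}+\sqrt{\eta/N})^2$ reflects the interaction between sub-leading~$\alpha$-corrections and~$K^3$-type combinatorial bounds over allele indices. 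The delicate point throughout is keeping every constant explicit as a function of~$N,\alpha,\beta,\gamma,K,\ta$, so that the final bound depends only on the stated moments of~$\tV$ and on the mutation probabilities, not on the full law of~$\tV$.

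For the convex-set bound, I would apply the standard smoothing strategy: approximate~$\II_A$ for each convex~$A\subset\IR^{K-1}$ by a smooth~$h_{A,\eps}$ with~$\abs{h_{A,\eps}}_k=\bigo(\eps^{-k})$ for~$k\leq 3$, apply the smooth-function bound to~$h_{A,\eps}$, and control the perimeter term~$\babs{\IP[\tZ\in A]-\IE h_{A,\eps}(\tZ)}$ via anti-concentration of~$\Dir(\ta)$ on~$\eps$-neighborhoods of~$\partial A$, with the rate depending on~$\min_i a_i$ and captured by~$\theta(\ta)$ at~\eq{10}. Optimizing over~$\eps$ produces the exponent~$\theta/(3+\theta)$; this step parallels the corresponding argument in the proof of Theorem~\ref{THM1}.
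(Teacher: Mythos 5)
Your outline matches the paper's actual proof: the paper sets up the stationary pair, checks that the linearity condition of its exchangeable-pairs theorem holds exactly with $\tR=0$ under $a_i=2(N-1)\pi_i/\alpha$, and then bounds the second- and third-moment error terms~$A_2,A_3$ using the decomposition of~$X_i'$ through the genealogy counts~$M_i$, multinomial/binomial moment formulas, and a H\"older--Minkowski argument, with the convex-set bound following from the same smoothing lemma. The one place your sketch is lighter than the paper is the third-moment step: the paper splits~$X_i'-X_i$ into three explicit pieces (conditionally centered multinomial noise, genealogical fluctuation~$M_i-X_i$, and deterministic drift), applies H\"older to turn the triple product into fourth and second moments, and in computing~$\IE(M_i-X_i)^4$ the fourth factorial moment~$\delta=\IE V_1(V_1-1)(V_1-2)(V_1-3)$ necessarily appears and is then eliminated via~$\delta\le(N-3)\beta$ — a detail your ``three offspring of one parent / two pairs of siblings'' heuristic does not surface, though it does correctly identify where~$\beta$ and~$\gamma$ enter.
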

\begin{remark}\label{rem1}
To interpret the bound of the theorem, we note that the bound goes to zero if, as~$N\to\infty$, $\eta\leq s$ remains bounded and all three of 
\ban{\label{4}
&\frac{\alpha}{N}, \qquad  \frac{\beta}{\alpha N},   \qquad
\frac{\gamma}{\alpha N},
}
tend to zero. And for the convergence to be to non-degenerate, we must have
\ben{\label{5}
a_i = \frac{2(N-1)\pi_i}{\alpha}\to \tilde{a}_i,
}
for some limiting positive~$\tilde{a}_i$,~$i=1,\ldots, K$, which also implies that 
$\eta=sN/(2(N-1))$ converges to a positive constant. 
As briefly mentioned above, under appropriate assumptions, the exchangeable genealogy alone (that is, without mutation structure)
converges to Kingman's coalescent as~$N\to\infty$. This convergence occurs if and only if
\cite{Mohle2000} \cite{Mohle2001, Mohle2003} 
\ben{\label{6}
\frac{\beta}{ \alpha N}\to 0.
}
In this case and also assuming a limiting scaling of the mutation 
probabilities given by~\eq{5},
 the finite population stationary distribution converges to the
stationary distribution of a Wright-Fisher diffusion, that is,~$\Dir(\ta)$. 
At first glance it appears that demanding the terms of~\eq{4} tend to zero
is a stronger requirement for convergence than the sufficient~\eq{6},
but \cite[Lemma~5.5]{Mohle2003}, \cite[Display~(16)]{Mohle2000}
show that~\eq{6} also implies
\be{
\frac{\alpha}{N}\to 0\,  \qquad \text{ and   } \qquad \frac{\gamma}{\alpha N}\to 0.
}
So, in fact, our bound goes to zero assuming only~\eq{6} and thus quantifies the convergence of the stationary distribution in terms of natural quantities. Assuming~$\eta$ remains bounded, we obtain
\be{
	A_2 = \bigo\bbbklr{K^2\frac{\alpha}{N}+\frac{K}{N}},\qquad A_3 = \bigo\bbbklr{K^3\bbklr{\frac{\alpha}{N}}^{3/2} + K^3\bbklr{\frac{\beta}{\alpha N}}^{1/2}+K^3\bbklr{\frac{\gamma}{\alpha N}}^{1/2}+\frac{K^3}{N}}.
}
\end{remark}

\begin{remark}
For the stationary distribution of types in an exchangeable Cannings genealogy with general mutation structure,
 a bound with features similar to those of Theorems~\ref{THM1} and~\ref{THM2} should be possible using our methods. However,
 the formulation and proof of such a result would be rather messy, and so, for the sake of exposition and clarity, we 
 present two separate theorems to handle more specific situations.
\end{remark}

\subsection{Stein's method of exchangeable pairs for the Dirichlet distribution}
 
Theorems~\ref{THM1} and~\ref{THM2} follow from a new development of Stein's method for the Dirichlet distribution.
Stein's method  \cite{Stein1972, Stein1986} is a powerful tool for providing bounds on the approximation of
a probability distribution of interest by a well understood target distribution; see \cite{Chen2011} and \cite{Ross2011}
for recent introductions, and \cite{Chatterjee2014} for a recent literature survey. We show a general exchangeable pairs Dirichlet approximation theorem very much in the spirit of exchangeable pairs approximation results for other distributions; e.g., normal \cite[Theorem~1.1]{Rinott1997}; multivariate normal \cite[Theorem~2.3]{Chatterjee2008} \cite[Theorem~2.1]{Reinert2009};
exponential \cite[Theorem~1.1]{Chatterjee2011} \cite[Theorem~1.1]{Fulman2013}; beta \cite[Theorem~4.4]{Dobler2015}; limits in Curie-Weiss models \cite[Theorem~1.1]{Chatterjee2011a}. In what follows, sums range from~$1$ to~$K$ unless otherwise stated.

\begin{theorem}\label{THM3}
Let~$\ta=(a_1,\ldots, a_K)$ be a vector of positive numbers and set~$s=\sum_{i=1}^K a_i$.
Let~$(\tW,\tW')$ be an exchangeable pair of~$(K-1)$ dimensional random vectors with non-negative entries with sum no greater than one.
Also let~$\Lambda$ be an invertible matrix and~$\tR$ be a random vector such that
\ben{
\IE [\tW'-\tW| \tW]=\Lambda (\ta-s \tW) + \tR. \label{7}
}
Then, for any~$h\in\BC^{2,1}(\-\Delta_K)$,
\ben{\label{8}
\abs{\IE h(\tW)-\IE h(\tZ)}\leq \frac{\abs{h}_1}{s} A_1 + \frac{\abs{h}_2}{2(s+1)} A_2 +\frac{\abs{h}_{2,1}}{6(s+2)} A_3,
}
where 
\ba{
A_1&:=\sum_{m,i} \babs{(\Lambda^{-1})_{i,m}}\IE \abs{R_m },\\ 
A_2&:=  \sum_{m,i,j} \babs{(\Lambda^{-1})_{i,m}} \IE \left| \Lambda_{m,i} W_i(\delta_{i j}-W_j)-\frac{1}{2}\IE[ (W_m'-W_m)(W_j'-W_j)|\tW] \right|, \\
A_3&:=\sum_{m,i,j,k} \babs{(\Lambda^{-1})_{i,m}} \IE \left|(W_m'-W_m)(W_j'-W_j)(W_k'-W_k) \right|.
}
Moreover, there exists a constant~$C=C(\ta)$ such that
\ben{\label{9}
	\sup_{A\in \cC_{K-1}}\babs{\IP[\tW\in A] - \IP[\tZ\in A]}\leq C\bklr{A_1+A_2+A_3}^{\theta/(3+\theta)},
}
where~$\cC_{K-1}$ is the family of convex sets on\/~$\IR^{K-1}$ and
\ben{\label{10}
	\theta = \frac{\theta_\wedge}{\theta_\wedge+\theta_\circ},\qquad \theta_\wedge = 1\wedge\min\{a_1,\dots,a_K\},\qquad \theta_\circ=\sum_{i=1}^K\bklr{1-1\wedge a_i}.
}
Additionally, if~$\Lambda$ is a multiple of the identity matrix, then
the result still holds assuming only that~$\law(\tW)=\law(\tW')$, in which 
case the factor~$\frac{\abs{h}_{2,1}}{6(s+2)}$ in \eq{8} can be improved 
to~$\frac{\abs{h}_{2,1}}{18(s+2)}$
\end{theorem}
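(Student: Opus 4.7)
Proof proposal: The plan is to use Barbour's generator approach, taking as Stein operator the generator of the Wright-Fisher diffusion whose stationary distribution is $\Dir(\ta)$:
\be{
\mathcal{A}f(\tx) = \tfrac{1}{2}\sum_{i,j}x_i(\delta_{ij}-x_j)\partial_{ij}f(\tx) + \tfrac{1}{2}\sum_{i}(a_i - s x_i)\partial_i f(\tx).
}
For $h\in\BC^{2,1}(\-\Delta_K)$, let $f=f_h$ solve $\mathcal{A}f = h-\IE h(\tZ)$. A separately developed semigroup representation (Griffiths, Li--Tavar\'e) is expected to yield derivative bounds of the form $\abs{f}_1\lesssim\abs{h}_1/s$, $\abs{f}_2\lesssim\abs{h}_2/(s+1)$, and $\abs{f}_{2,1}\lesssim\abs{h}_{2,1}/(s+2)$, which we take as given. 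The remaining task is to bound $\abs{\IE\mathcal{A}f(\tW)}$ in terms of $A_1, A_2, A_3$.

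Substituting the inverted regression identity $(a_i-sW_i)=\sum_m(\Lambda^{-1})_{im}(\IE[W_m'-W_m|\tW]-R_m)$ into the drift half of $\mathcal{A}f(\tW)$, the $\tR$-piece contributes at most $\abs{f}_1 A_1/2$. For the remaining term $\tfrac{1}{2}\sum_{i,m}(\Lambda^{-1})_{im}\IE[(W_m'-W_m)\partial_i f(\tW)]$, exchangeability of $(\tW,\tW')$ applied to the antisymmetric function $(W_m'-W_m)(\partial_i f(\tW)+\partial_i f(\tW'))$ yields
\be{
\IE[(W_m'-W_m)\partial_i f(\tW)] = -\tfrac{1}{2}\IE[(W_m'-W_m)(\partial_i f(\tW')-\partial_i f(\tW))].
}
Taylor expanding $\partial_i f(\tW')$ to first order around $\tW$, with Lipschitz remainder bounded by $\abs{f}_{2,1}\norm{\tW'-\tW}_1^2/2$, turns this into a quadratic form $-\tfrac{1}{4}\sum_{i,j,m}(\Lambda^{-1})_{im}\IE[(W_m'-W_m)(W_j'-W_j)\partial_{ij}f(\tW)]$ plus an error absorbed into $\abs{f}_{2,1}A_3$. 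Using the identity $\sum_m(\Lambda^{-1})_{im}\Lambda_{mi}=1$ (for each fixed $i$) to rewrite the diffusion half as $\tfrac{1}{2}\sum_{i,j,m}(\Lambda^{-1})_{im}\Lambda_{mi}W_i(\delta_{ij}-W_j)\partial_{ij}f(\tW)$, then combining with the quadratic form and invoking the tower property, produces precisely the expression inside $A_2$, bounded by $\abs{f}_2 A_2/2$; summing gives \eqref{8}.

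In the identity case $\Lambda=\lambda\Id$ with only $\law(\tW)=\law(\tW')$ assumed, antisymmetry is replaced by Taylor-expanding the identity $0=\IE[f(\tW')-f(\tW)]$ directly to third order, with cubic Lipschitz remainder of size $\abs{f}_{2,1}\norm{\tW'-\tW}_1^3/6$; this avoids the compounded error of the antisymmetry-plus-Taylor approach and yields the sharper $1/18$ constant in \eqref{8}. For the convex set bound \eqref{9}, I would apply a standard smoothing argument: approximate $\II_A$ by a mollified function $h_\eps\in\BC^{2,1}(\-\Delta_K)$ with $\abs{h_\eps}_{2,1}\lesssim\eps^{-3}$, apply the smooth-function bound to $h_\eps$, and control the smoothing error $\IE\abs{h_\eps(\tZ)-\II_A(\tZ)}$ by an anticoncentration estimate for $\Dir(\ta)$ on the $\eps$-tube around $\partial A$. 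The exponent $\theta$ in \eqref{10} reflects the interplay between the regular-boundary anticoncentration (scaling like $\eps^{\theta_\wedge}$) and the integrable singularities $x_i^{a_i-1}$ of the Dirichlet density on faces where $a_i<1$ (contributing the $\theta_\circ$ correction); optimizing $\eps^{-3}(A_1+A_2+A_3)+\eps^\theta$ in $\eps$ gives the exponent $\theta/(3+\theta)$.

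The two main obstacles are (i) the derivative bounds on the Stein solution $f$, which require careful analysis of the smoothing action of the Wright-Fisher semigroup on $h-\IE h(\tZ)$ and constitute the substantive new input of the paper, and (ii) the boundary anticoncentration for the convex set bound, which must simultaneously handle the convex geometry on the simplex and the integrable but potentially unbounded Dirichlet density on low-dimensional faces, forcing the two-parameter form of $\theta$ in \eqref{10}.
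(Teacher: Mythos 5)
Your proposal is correct and follows essentially the same route as the paper. In both cases: the Stein solution bounds of Theorem~\ref{THM5} are the substantive input; the general case combines the exchangeability identity $\IE[(W_m'-W_m)\partial_i f(\tW)]=-\tfrac12\IE[(W_m'-W_m)(\partial_i f(\tW')-\partial_i f(\tW))]$ with a first-order Taylor expansion of $\nabla f(\tW')-\nabla f(\tW)$ and the linearity condition; the identity-$\Lambda$ case instead Taylor-expands $0=\IE[f(\tW')-f(\tW)]$ directly to third order, avoiding the antisymmetry step and giving the sharper $\tfrac1{18(s+2)}$; and the convex-set bound comes from a mollified indicator with $\abs{h_\eps}_{2,1}\lesssim\eps^{-3}$ combined with Dirichlet anticoncentration near the simplex boundary, the two-parameter form of $\theta$ matching the split in Lemma~\ref{lem9} between the $\delta^{\theta_\wedge}$ boundary mass and the $\delta^{-\theta_\circ}$ interior density bound. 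The only cosmetic difference is order of operations: the paper starts from the vector identity $0=\tfrac12\IE[(\tW'-\tW)^t\Lambda^{-t}(\nabla f(\tW')+\nabla f(\tW))]$ and then inserts the linearity condition, whereas you substitute the inverted regression identity into the drift half of $\mathcal{A}f$ first and then invoke antisymmetry coordinate-wise; these give identical algebra, and your accounting in the general case in fact yields the constant $\tfrac{1}{12(s+2)}$ on the $A_3$ term (tighter than the stated $\tfrac{1}{6(s+2)}$, which is harmless since the theorem is an upper bound).

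Two small reminders for a full write-up. First, to turn the Lipschitz remainder estimate $\abs{\partial_i f(\tw')-\partial_i f(\tw)-\sum_j(w_j'-w_j)\partial_{ij}f(\tw)}\leq\tfrac12\abs{f}_{2,1}\norm{\tw'-\tw}_1^2$ into a genuine sum of the form $\tfrac12\sum_{j,k}(w_j'-w_j)(w_k'-w_k)\tilde Q_{ijk}$ with $\abs{\tilde Q_{ijk}}\leq\abs{f}_{2,1}$, you need to define the $\tilde Q_{ijk}$ explicitly as the paper does (they may depend on $\tw,\tw'$ but are uniformly bounded). Second, before taking expectations you need $f\in\BC^{2,1}(\-\Delta_K)$ and that the Stein equation holds on the closed simplex, not just the open one, since $\tW$ can sit on the boundary; this is exactly the extension statement in the last part of Theorem~\ref{THM5}.
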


The layout of the remainder of the paper is as follows. We finish the introduction by applying
 Theorem~\ref{THM3} in an easy example, the multi-colored P\'olya urn. 
In Section~\ref{sec1} we develop Stein's method for the Dirichlet distribution and
prove Theorem~\ref{THM3}.
In Section~\ref{sec2} we prove Theorem~\ref{THM1}, the bounds for the Wright-Fisher model and
in Section~\ref{sec3} we prove Theorem~\ref{THM2}, the bounds for the PIM Cannings model.

\paragraph{A simple example: Multi-colored P\'olya Urn.}

In order to illustrate how Theorem~\ref{THM3} is applied, we use it to bound the error in approximating the counts in the classical P\'olya urn by a Dirichlet distribution. The result is new to us, but a bound in the
Wasserstein distance could be obtained 
from analogous bounds for the beta distribution \cite{Goldstein2013} \cite{Dobler2015} using
the iterative urn approach of \cite{Pekoz2014a}.

An urn initially contains~$a_i>0$ ``balls" of color~$i$ for~$i =1,2,\ldots, K$ with a total number of balls~$s= \sum_{i=1}^K a_i$. At each time step, draw a ball uniformly at random from the urn and replace it along with another ball of the same color. 
Let~$\tX(n) = (X_1(n), X_2(n), \ldots , X_{K-1}(n))$, where~$X_i(n)$ is the number of times color~$i$ was drawn up to and including the~$n$th draw. It is well known (see, e.g., \cite{Mahmoud2009}) that as~$n \to \infty$, 
\be{  
\tW(n):= \frac{\tX(n)}{n} \stackrel{d}{\longrightarrow} \Dir(\ta),
}
and we provide a bound on the approximation of the distribution of~$\tW(n)$ by the Dirichlet limit.

\begin{theorem}\label{THM4}
Let~$\ta=(a_1, \ldots, a_K)$ be a vector of positive numbers,~$s=\sum_{i=1}^K{a_i}$,~$\tZ\sim\Dir(\ta)$, and~$\tW(n)$ be the P\'olya urn proportions as defined above. Then, for any~$h\in\BC^{2,1}(\-\Delta_K)$, 
\be{
\abs{\IE h(\tW(n)) -\IE h(\tZ)}\leq \frac{s}{n(s+1)} \abs{h}_2 +  \frac{(K-1)(3K-5) (n+s-1)}{18n^2(s+2)} \abs{h}_{2,1}.
}
Moreover, there exists a constant~$C=C(\ta)$ such that
\be{
\sup_{A\in \cC_{K-1}}\abs{\IP[\tW\in A] - \IP[\tZ\in A]}\leq Cn^{-\theta/(3+\theta)},
}
where $\theta=\theta(\ta)>0$ is defined at~\eq{10}.
\end{theorem}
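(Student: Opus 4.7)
The plan is to apply Theorem~\ref{THM3} with an exchangeable pair obtained by a single ``add one, delete one uniformly'' step. Given $\tX(n)$, draw the next ball $I_{n+1}$ from the urn, yielding $\tX(n+1)=\tX(n)+\te_{I_{n+1}}$, then pick $L$ uniformly on $\{1,\dots,n+1\}$ and set $\tX':=\tX(n+1)-\te_{I_L}$, $\tW':=\tX'/n$. Position-exchangeability of the P\'olya sequence $(I_1,\dots,I_{n+1})$ makes $(\tW,\tW')$ exchangeable: the joint law is invariant under the transposition that swaps the uniformly chosen position $L$ with the fixed ``extra'' position $n+1$, and both $\tW$ and $\tW'$ have the same marginal as $\tX(n)/n$. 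A direct conditioning computation, using $\IP[I_L=j\mid I_{n+1}=i,\tX(n)]=(X_j(n)+\delta_{ij})/(n+1)$, yields
\be{
\IE[\tW'-\tW\mid\tW]=\frac{1}{(n+s)(n+1)}(\ta-s\tW).
}
Thus Theorem~\ref{THM3} applies with $\Lambda=\lambda\,\Id$, $\lambda=1/((n+s)(n+1))$, and $\tR=0$. Since $\Lambda$ is a scalar multiple of the identity, the improved factor $1/(18(s+2))$ in front of $A_3$ is in force and $A_1=0$ immediately.

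\textbf{Second-moment term $A_2$.} For $A_2$, I would expand
\be{
(W_m'-W_m)(W_j'-W_j)=\frac{1}{n^2}\bklr{\mathbbm{1}[I_{n+1}{=}m]-\mathbbm{1}[I_L{=}m]}\bklr{\mathbbm{1}[I_{n+1}{=}j]-\mathbbm{1}[I_L{=}j]},
}
take conditional expectation given $\tX(n)$, and subtract $2\lambda W_m(\delta_{mj}-W_j)$. The discrepancy evaluates to $[na_m+X_m(s-2a_m)]/(2n^2)$ when $m=j$ and to $-(a_mX_j+a_jX_m)/(2n^2)$ when $m\neq j$. Taking absolute values, taking outer expectation, and using the P\'olya martingale identity $\IE W_m(n)=a_m/s$ (valid at every~$n$, not only asymptotically), the diagonal sum is bounded by $s/n$ via $|s-2a_m|\le s$, and the off-diagonal sum equals $\sum_{m\neq j}a_ma_j/(ns)\le s/n$. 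Hence $A_2\le 2s/n$, yielding exactly the coefficient $s/(n(s+1))$ in front of $|h|_2$.

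\textbf{Third-moment term $A_3$ and conclusion.} Since $\tW'-\tW$ shifts at most two coordinates by $\pm 1/n$ (with the convention $W_K=1-\sum_{i<K}W_i$), the bound $\sum_{m=1}^K|W_m'-W_m|\le (2/n)\mathbbm{1}[I_{n+1}\neq I_L]$ gives
\be{
\sum_{m,j,k}\babs{(W_m'-W_m)(W_j'-W_j)(W_k'-W_k)}\le \frac{8}{n^3}\mathbbm{1}[I_{n+1}\neq I_L].
}
Multiplying by $\lambda^{-1}=(n+s)(n+1)$ and combining with the explicit factorisation $\IP[I_{n+1}\neq I_L\mid\tX(n)]=[(n+s)(n+1)]^{-1}\sum_{m\neq j}X_m(X_j+a_j)$ (a short conditional computation) produces $A_3$ of the stated order $(K-1)(3K-5)(n+s-1)/n^2$ after collecting constants. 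The convex-set bound then follows directly from the second conclusion of Theorem~\ref{THM3} with $\theta$ from~\eq{10}. The only mildly delicate step is verifying the position-exchangeability argument that underpins the construction of $(\tW,\tW')$; once that is in hand, the moment bookkeeping is routine thanks to both the $\Lambda=\lambda\,\Id$ simplification (which kills $A_1$ and halves the coefficient on $A_3$) and the martingale identity $\IE W_m(n)=a_m/s$.
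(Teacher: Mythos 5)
Your exchangeable pair is genuinely different from the paper's: you add the $(n+1)$th draw and delete a uniformly chosen one, whereas the paper fixes $\tX(n-1)$ and resamples just the $n$th draw (so $\tW' = \tW - \tY(n)/n + \tY'(n)/n$ with $\tY(n),\tY'(n)$ conditionally i.i.d.). Your construction and the transposition argument for exchangeability are both correct, and the regression identity $\IE[\tW'-\tW\mid\tW]=[(n{+}1)(n{+}s)]^{-1}(\ta-s\tW)$ checks out, giving $\Lambda = [(n{+}1)(n{+}s)]^{-1}\Id$ and $\tR=0$; the paper instead has $\Lambda = [n(n{+}s{-}1)]^{-1}\Id$. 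Your $A_2$ computation is right and lands exactly on $A_2 \le 2s/n$, matching the paper.

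Where the proposal falls short of the stated theorem is the $A_3$ coefficient. The theorem asserts the explicit constant $(K-1)(3K-5)(n+s-1)/(18n^2(s+2))$, but your crude bound $\sum_{m,j,k}\abs{\cdots}\le 8n^{-3}\mathbbm{1}[I_{n+1}\ne I_L]$ gives $A_3 \le 8(n+1)(n+s)n^{-3}\IP[I_{n+1}\ne I_L]$, which is not of that form (it lacks the $(K-1)(3K-5)$ combinatorial factor and has a different $n$-dependence). The paper instead notes that if $i,j,k\in\{1,\dots,K-1\}$ are pairwise distinct then at least one increment vanishes (only two coordinates ever move), so the triple product is identically zero, and otherwise $\abs{\cdot}\le n^{-3}$; counting not-all-distinct triples yields $(K-1)(3K-5)$. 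Even if you adopt that refinement, your $\lambda^{-1}=(n+1)(n+s)$ exceeds the paper's $n(n+s-1)$, so you would arrive at $A_3\le (K-1)(3K-5)(n+1)(n+s)/n^3$, which is strictly larger (by the ratio $(n+1)(n+s)/[n(n+s-1)]$, roughly $4$ for $n=s=1$). In short: your route proves the correct $O(1/n)$ order and hence the convex-set bound, but it does not establish the exact coefficient on $\abs{h}_{2,1}$ claimed in the theorem; you need either to switch to the paper's resample-last-draw pair, or to re-derive the theorem with the slightly weaker constant your pair actually yields.
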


We use Theorem~\ref{THM3} to prove the result. To define the exchangeable pair,
note that we can set~$\tX(n) = \sum_{j=1}^n \tY(j)$ where, for~$\te_i$ equal to the~$i$th unit vector,~$\tY(j)=\te_i$ if color~$i$ is drawn on the~$j$th draw.
It is easy to check that 
\be{
\IP\bkle{\tY(j) = \te_i \bmid \tX(j-1)} = \frac{X_i(j-1) + a_i}{j-1+s}.
}
We define the exchangeable pair~$(\tW, \tW')$ (dropping the~$n$ to ease notation) by resampling the last draw~$\tY(n)$; that is, 
\be{
\tW' = \tW - \frac{\tY(n)}{n} + \frac{\tY'(n)}{n},
}
where conditional on~$\tX(n-1)$,~$\tY'(n)$ and~$\tY(n)$ are i.i.d.\ Before computing the terms appearing in the bound of Theorem~\ref{THM3}, we record a lemma.
\begin{lemma}\label{lem1}
Recalling the notation and definitions above, and let~$\d_{ij}$ denote the Kronecker delta function,
\ba{
\IE(\tY'(n) | \tW) &= \frac{1}{n+s-1}\left[ \ta + (n-1)\tW\right],\\
\IE(Y_i'(n)Y_j(n) | \tW) &= \frac{1}{n+s-1} \IE \left[ a_i W_j + nW_iW_j - W_i \d_{ij}\right].
}
\end{lemma}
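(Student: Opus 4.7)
The plan is to compute both conditional expectations by conditioning first on $\tX(n-1)$ (where the P\'olya urn update probabilities are explicit), then passing to $\tW$ via the tower property. The key identity that makes this work is $\tX(n-1) = n\tW - \tY(n)$, together with the fact that, conditional on $\tX(n-1)$, the resampled draw $\tY'(n)$ is independent of $\tY(n)$.

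For the first formula, I would compute
\be{
\IE\bkle{\tY'(n) \bmid \tX(n-1),\tY(n)} = \IE\bkle{\tY'(n) \bmid \tX(n-1)} = \frac{\tX(n-1)+\ta}{n-1+s},
}
using the conditional i.i.d.\ property of $(\tY(n),\tY'(n))$ given $\tX(n-1)$ and the P\'olya update rule recalled just before the lemma. Plugging in $\tX(n-1) = n\tW - \tY(n)$ and taking conditional expectation given $\tW$, the identity reduces to the auxiliary fact
\ben{\label{eq:poly-exch}
\IE\bkle{\tY(n) \bmid \tW} = \tW,
}
which I would justify by exchangeability of the draws $(\tY(1),\ldots,\tY(n))$ in the P\'olya scheme: averaging $\IE[\tY(k)\mid \tX(n)]$ over $k=1,\dots,n$ yields $\tX(n)/n = \tW$, and by exchangeability each term is the same.

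For the second formula, I would again condition on $(\tX(n-1),\tY(n))$ and use conditional independence to write
\be{
\IE\bkle{Y_i'(n) Y_j(n) \bmid \tX(n-1),\tY(n)} = Y_j(n)\cdot \frac{X_i(n-1)+a_i}{n-1+s}.
}
Substituting $X_i(n-1) = nW_i - Y_i(n)$, using the indicator identity $Y_i(n) Y_j(n) = Y_i(n)\,\d_{ij}$ (since the $\tY(n)$ are unit vectors), and applying \eqref{eq:poly-exch} to handle the remaining $Y_j(n)$ gives the claimed expression.

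There is no genuine obstacle; the only subtle step is the use of exchangeability to get \eqref{eq:poly-exch}, and the main discipline is to keep track of which variables are being conditioned on at each stage so that the tower property can be applied cleanly.
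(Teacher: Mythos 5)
Your proposal is correct and follows essentially the same route as the paper: condition on the pre-resampling history (the paper uses $(\tY(1),\ldots,\tY(n))$ while you use $(\tX(n-1),\tY(n))$, which are interchangeable here since the update rule depends only on $\tX(n-1)$), invoke the P\'olya update probabilities, substitute $\tX(n-1)=n\tW-\tY(n)$, use $Y_i(n)Y_j(n)=\delta_{ij}Y_i(n)$, and finish with the exchangeability consequence $\IE[\tY(n)\mid\tW]=\tW$. Your slightly more explicit justification of that last identity is a welcome addition but does not change the argument.
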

\begin{proof}
First note that 
\be{
\IE[\tY'(n)|  (\tY(1), \ldots, \tY(n))]=\frac{1}{n+s-1} \left[ \ta + n\tW - \tY(n) \right].
}
The first equality now follows by taking expectation conditional on~$\tW$ and noting that exchangeability implies~$\IE [\tY(n)| \tW]=\tW$.
For the second identity, use the previous display to find
\ba{
\IE[ Y_i'(n) Y_j(n) | Y_j(1), \ldots Y_j(n)] = \frac{Y_j(n)}{n+s-1} \left[ a_i +n W_i - Y_i(n) \right],
}
and taking expectation conditional on~$\tW$, noting~$\IE \tY(n)=\tW$ and~$Y_i(n) Y_j(n)=\delta_{i j} Y_i(n)$,
yields
\be{
\IE(Y_i'(n)Y_j(n)| \tW)= \frac{1}{n+s-1} \IE \left[ a_i W_j + nW_iW_j - W_i \d_{ij}\right].   \qedhere
} 
\end{proof}

\begin{proof}[Proof of Theorem~\ref{THM4}]
We apply Theorem~\ref{THM3} with the exchangeable pair defined above. We show below
that for~$i,j,k\in\{1,\ldots, K\}$, 
\ban{
&\IE[\tW'-\tW|\tW]= \frac{1}{n(n+s-1)}\left(\ta - s \tW\right), \label{12}\\
&\IE[(W_i' - W_i)(W_j' - W_j)| \tW]  =\frac{\d_{ij}(a_i + (2n+s)W_i) - a_i W_j - a_j W_i - 2nW_iW_j}{n^2(n+s-1)}, \label{13}\\
&\IE \left| (W_i'-W_i)(W_j'-W_j)(W_k'-W_k) \right| \leq n^{-3}(1-\I[\text{$i,j,k$ distinct}]), \label{14}
}
so we can apply the Theorem~\ref{THM3} with~$\Lambda=\frac{1}{n (n+s-1)} \times \Id$.
In this case, using~\eq{13}, 
\ba{
A_2&= n(n+s-1)\sum_{i,j=1}^{K-1}  \IE \left| \frac{1}{n(n+s-1)} W_i(\d_{ij}-W_j) - \frac{1}{2} \IE\left[(W_i'-W_i)(W_j'-W_j)|\tW\right]\right|\\
	&=\frac{1}{2n} \sum_{i,j=1}^{K-1} \IE \left| \d_{ij}(a_i + sW_i) - a_iW_j - a_jW_i\right|\leq \frac{2s}{n}.
}
Now using~\eq{14}, we have
\be{
A_3\leq \frac{(K-1)(3K-5) (n+s-1)}{n^2}.
}
Finally the form of~\eq{12} makes it clear that~$R=0$ and so~$A_1=0$. Putting together the last two displays yields the result.

All that is left is to show~\eq{12},~\eq{13}, and~\eq{14}. Lemma~\ref{lem1} implies
\be{
\IE[\tW' - \tW | \tW]= \frac{1}{n} \IE [ \tY'(n) - \tY(n) | \tW ]=\frac{1}{n(n+s-1)}\left(\ta - s \tW\right),
}
which is~\eq{12}. For~\eq{13}, use Lemma~\ref{lem1} and that~$Y_i(n) Y_j(n)=\delta_{i j} Y_i(n)$ and~$Y_i'(n) Y_j'(n)=\delta_{i j} Y_i'(n)$ to find
\ba{
&\IE[(W_i' - W_i)(W_j' - W_j)| \tW] = \frac{1}{n^2} \IE[ Y_i'(n)Y_j'(n) + Y_i(n)Y_j(n) - Y_i'(n)Y_j(n) - Y_i(n)Y_j'(n) | \tW]\\
	&= \frac{1}{n^2} \left[\frac{\d_{ij}}{n+s-1}(a_i + (n-1)W_i) + \d_{ij}W_i - \frac{1}{n+s-1}  \left(a_i W_j + a_j W_i + 2nW_iW_j - 2 W_i \d_{ij}\right) \right]\\
	&= \frac{1}{n^2(n+s-1)} \left[ \d_{ij}(a_i + (2n+s)W_i) - a_i W_j - a_j W_i - 2nW_iW_j\right].
}
Finally~\eq{14} follows noting that~$\abs{W_i'-W_i}\leq 1/n$ and that at most two of the~$(W_i'-W_i)$ can be non-zero.

The bound on the convex set metric is immediate from the bounds on~$A_2$ and~$A_3$ and \eq{9}.
\end{proof}

\section{Stein's~method~for~the~Dirichlet~distribution}\label{sec1}

\subsection{Stein operator}

In order to apply Stein's method we need a characterizing operator for the Dirichlet distribution, which is provided below. 
Let~$\delta_{ij}$ denote the Kronecker delta function, and for a function~$f$, let~$f_j$ be the partial derivative of~$f$ with respect to the~$j$th component,~$f_{i j}$ the 2nd partial derivative, and so on.

\begin{lemma}
Let~$a_1, \ldots, a_K$ be positive numbers and~$s=\sum_{i=1}^K a_i$. The random vector~$\tW\in\simp$ has distribution~$\Dir (a_1,\ldots,a_K)$ if and only if for all~$f\in\BC^{2,1}(\Delta_K)$
\be{
\IE\left[ \sum_{i,j=1}^{K-1} W_i(\delta_{i j}-W_j)f_{i j}(\tW)
+\sum_{i=1}^{K-1}(a_i-s W_i)f_{i}(\tW)\right]=0.
}
\end{lemma}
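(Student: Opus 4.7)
My plan is to prove the two implications separately: the ``only if'' direction via integration by parts against the Dirichlet density, and the ``if'' direction via a moment recursion. For the ``only if'' direction, I would first observe that the Stein operator
\[
\mathcal{A}f(\tx) := \sum_{i,j=1}^{K-1} x_i(\delta_{ij}-x_j)f_{ij}(\tx) + \sum_{i=1}^{K-1}(a_i - sx_i)f_i(\tx)
\]
--- which is the generator of the Wright--Fisher diffusion --- can be put in divergence form with respect to the Dirichlet density:
\[
\psi_\ta(\tx)\,\mathcal{A}f(\tx) = \sum_{j=1}^{K-1}\frac{\partial}{\partial x_j}\Bigl(\psi_\ta(\tx) \sum_{i=1}^{K-1} x_i(\delta_{ij}-x_j) f_i(\tx)\Bigr).
\]
This is a direct calculation using $\partial_j \log \psi_\ta(\tx) = (a_j-1)/x_j - (a_K-1)/x_K$ for $j<K$: the second-derivative piece matches trivially, and the drift coefficient of $f_i$ reduces to exactly $a_i - sx_i$ after invoking the identity $\sum_{j<K}(\delta_{ij}-x_j) = x_K$. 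Integrating over $\Delta_K$ against $\tW\sim\Dir(\ta)$ and applying the divergence theorem then expresses $\IE[\mathcal{A}f(\tW)]$ as a boundary integral over $\partial\Delta_K$, which I would check vanishes face-by-face: on each face $\{x_j=0\}$ with $j<K$, the outward flux carries a factor $x_j\psi_\ta \sim x_j^{a_j}$ which vanishes by $a_j>0$; on the face $\{x_K=0\}$, summing the components along the outward normal gives $\sum_{j<K}F_j = \psi_\ta x_K \sum_i x_i f_i \sim x_K^{a_K}$, which also vanishes. Hence $\IE[\mathcal{A}f(\tW)] = 0$.

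For the converse, I would test the identity on monomial functions $f(\tx) = \prod_{i<K} x_i^{n_i}$ to derive a recursion for the mixed moments of $\tW$. Starting from $\IE[1] = 1$, the cases $f=x_k$ and $f=x_kx_l$ already yield $\IE W_k = a_k/s$ and $\IE[W_kW_l] = (a_ka_l + \delta_{kl}a_k)/(s(s+1))$, and induction on the total polynomial degree shows that every mixed monomial moment of $\tW$ is uniquely determined by the identity. Since the same recursion is satisfied by $\Dir(\ta)$ (by the already-established direction) and the Dirichlet distribution is determined by its moments (it is supported on the compact set $\bar\Delta_K$), we conclude $\law(\tW) = \Dir(\ta)$.

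The main technical obstacle is the boundary analysis in the ``only if'' direction: whenever some $a_i < 1$, the Dirichlet density blows up on the corresponding face of $\Delta_K$, so the vanishing of the boundary integral is not automatic. The cancellations that make it work come from two sources --- the explicit factor $x_i$ in $x_i(\delta_{ij}-x_j)$ supplies the missing power on the faces $\{x_i=0\}$ with $i<K$, and the identity $\sum_{j<K}(\delta_{ij}-x_j) = x_K$ supplies the missing $x_K$ on the remaining face $\{x_K=0\}$.
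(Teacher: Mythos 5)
Your proposal is correct and takes the route the paper merely gestures at (``The forward implication\dots is straightforward and the backwards follows by taking expectations against polynomials~$f$ to yield formulas for mixed moments''). A few remarks on the details you supplied.

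For the forward direction, the divergence-form identity
\be{
\psi_\ta(\tx)\,\mathcal{A}f(\tx) = \sum_{j=1}^{K-1}\partial_j\Bigl(\psi_\ta(\tx)\sum_{i=1}^{K-1} x_i(\delta_{ij}-x_j)f_i(\tx)\Bigr)
}
does check out: expanding, the drift coefficient of $f_i$ comes out to exactly $a_i - sx_i$ after using $\sum_{j<K}(\delta_{ij}-x_j)=x_K$ and $\partial_j\log\psi_\ta=(a_j-1)/x_j-(a_K-1)/x_K$. Your boundary analysis is right, but it deserves one more sentence: when some $a_i<1$, the divergence theorem should be applied on a shrunken simplex $\Delta_K^{-\eps}$ and then $\eps\to0$; the interior integral converges by dominated convergence since $\mathcal{A}f$ is bounded and $\psi_\ta$ is integrable, and the boundary flux tends to zero precisely because of the factors $x_j^{a_j}$ and $x_K^{a_K}$ you identified.

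For the converse, the induction is fine, but the key point---that the leading-degree moment really is pinned down---should be stated. Testing on $f=\prod_{i<K}x_i^{n_i}$ with $\sum n_i=n$, the degree-$n$ part of $\mathcal{A}f$ is $-n(n+s-1)\,f$ (from $x_if_i=n_if$ and $\sum_{i,j}x_ix_jf_{ij}=n(n-1)f$), and $n(n+s-1)\neq0$, so $\IE\prod W_i^{n_i}$ is expressed in terms of strictly lower-degree moments, closing the induction. Combined with moment determinacy on the compact set $\bar\Delta_K$, this gives $\law(\tW)=\Dir(\ta)$ as you say. Overall this is a faithful, complete version of what the authors left as a sketch.
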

The forward implication of the lemma is straightforward 
and the backwards follows by taking expectations against polynomials~$f$ to yield formulas for mixed moments of~$\tW$. Also note that  
\ben{\label{15}
\mathcal{A}f(\tx) := \frac{1}{2}\left[\sum_{i,j=1}^{K-1} x_i(\delta_{i j}-x_j) f_{i j}(\tx)
+\sum_{i=1}^{K-1}(a_i-s x_i)f_i(\tx)\right]
}
is the generator of the Wright-Fisher diffusion which has the Dirichlet as its unique stationary distribution; see \cite{Wright1949}, \cite{Ethier1976}, \cite{Shiga1981}.

\subsection{Bounds on the solution to the Stein equation}

To apply Stein's method, we proceed as follows. Let~$\tZ\sim\Dir(\ta)$, and let~$h:\-\Delta_K\to\IR$ be some measurable test function. If~$h$ is bounded, then clearly~$\IE \abs{h(\tZ)} < \infty$. Assume we have a function~$f:=f_h$ that solves
\ben{\label{16}
\sum_{i,j=1}^{K-1} x_i(\delta_{i j}-x_j)f_{i j}(\tx)
+\sum_{i=1}^{K-1}(a_i-s x_i) f_i (\tx)=h(\tx)-\IE h(\tZ) =: \tilde{h}(\tx),
}
and note that replacing~$\tx$ by~$\tW$ in this equation and taking expectation gives an expression for~$\IE h(\tW)-\IE h(\tZ)$
in terms of just~$\tW$ and~$f$. 
Since this operator is twice the generator of the Wright-Fisher diffusion given by~\eq{15}, we use the generator approach of \cite{Barbour1990} \cite{Gotze1991},
and observe we may set~$f$ to be
\ben{\label{17}
 f(\tx) = - \frac{1}{2}\int_0^\infty \IE \left[ h(\Zxt) - h(\tZ) \right] dt = - \frac{1}{2}\int_0^\infty \IE\tilde{h}(\Zxt)  dt,
\qquad\tx\in\Delta_K, }
where~$(\Zxt)_{t\geq0}$ is the Wright-Fisher diffusion, defined by the generator~$\mathcal{A}$ with~$\tZ_\tx(0) = \tx$ (the factor 
of~$1/2$ in the expression appears since~\eq{15} is twice the generator of~$\tZ$).  
Using a probabilistic description of the Wright-Fisher semigroup due to 
\cite{Griffiths1983} and \cite{Tavare1984} we show that the above integral is well defined, and we obtain the following bounds on the solution~\eq{17} to the Stein equation~\eq{16}.

\begin{theorem}\label{THM5} 
If~$h:\-\Delta_K\to\IR$ is continuous, then~$f$ defined by \eq{17} is twice partially differentiable and solves \eq{16} for all~$\tx\in\Delta_K$, and we have the bound
\ben{\label{18}
\norm{f}_\infty\leq \frac{(s+1)}{s}\norm{\tilde h}_\infty.
}
If~$h\in\BC^{m,1}(\-\Delta_K)$ for some~$m\geq 0$, then~$f\in\BC^{m,1}(\-\Delta_K)$, and we have the bounds
\ben{\label{19}
\abs{f}_k\leq \frac{\abs{h}_k}{k(s+k-1)},\quad 1\leq k\leq m,\qquad\text{and}
\qquad
\abs{f}_{m,1}\leq \frac{\abs{h}_{m,1}}{m(s+m-1)}.
}
If~$m\geq 2$, then 
equation \eq{16} holds for all~$\tx\in\-\Delta_K$.
\end{theorem}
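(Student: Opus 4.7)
The plan is to combine Barbour's generator approach~\cite{Barbour1990, Gotze1991} with the probabilistic representation of the Wright--Fisher semigroup $(P_t)_{t\ge 0}$ due to~\cite{Griffiths1983} and~\cite{Tavare1984}. That description furnishes a pure-death line-counting process $M(t)$ on $\{0, 1, 2, \ldots\}$, started from $M(0) = \infty$ with rate $\lambda_m = m(m+s-1)/2$ for the transition $m \to m-1$, such that conditional on $\{M(t) = m\}$ the diffusion $\Zxt$ is a $\mathrm{Mult}(m, \tx)$-mixture of Dirichlets with parameters $\ta + N$. Writing $q_m(t) = \IP[M(t)=m]$, $H(n) = \IE h(\Dir(\ta + n))$, and $Q_m h(\tx) = \sum_{\abs{n}=m} \binom{m}{n} \tx^n H(n)$, we obtain $P_t h(\tx) = \sum_{m \ge 0} q_m(t) Q_m h(\tx)$ with $Q_0 h = \IE h(\tZ)$. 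Since $\int_0^\infty q_m(t)\,dt = 1/\lambda_m$ for $m \ge 1$ and $\sum_{m \ge 1} 1/\lambda_m < \infty$, the integral in~\eqref{17} converges absolutely. A refined estimate of $\norm{Q_m h - Q_0 h}_\infty$ then yields the supremum bound~\eqref{18}, and $\mathcal{A}f = -\tilde h/2$, i.e.,~\eqref{16} on $\Delta_K$, follows from differentiating under the integral in~\eqref{17} and invoking the Kolmogorov forward equation for $\Zxt$.

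The heart of the argument is the derivative bound~\eqref{19}. Setting $x_K := 1 - \sum_{j < K} x_j$ and differentiating the polynomial $Q_m h$ in $x_i$, a reindexing of the multinomial sum yields the key recursion
\begin{equation*}
\partial_i Q_m h(\tx) = m\, Q_{m-1}(\Delta_i H)(\tx),\qquad \Delta_i H(n) := H(n + \te_i) - H(n + \te_K).
\end{equation*}
To control $\Delta_i H(n)$, I would use the gamma construction of the Dirichlet together with a size-biasing coupling: if $Y_j \sim \Gamma(a_j + n_j, 1)$ are independent and $E \sim \mathrm{Exp}(1)$ is independent of $Y$, then $(Y + E \te_j)/(\abs{Y}+E) \sim \Dir(\ta + n + \te_j)$, where $\abs{Y} := \sum_\ell Y_\ell$. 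Coupling the $j=i$ and $j=K$ cases via the \emph{same} $E$, the two resulting Dirichlets agree on every coordinate except $i$, where they differ by $E/(\abs{Y}+E) \sim \Beta(1, s+\abs{n})$; this produces $\abs{\Delta_i H(n)} \le \abs{h}_1/(s+\abs{n}+1)$ and therefore $\abs{\partial_i Q_m h(\tx)} \le m \abs{h}_1/(s+m)$. Iterating the scheme $k$ times gives $\partial^k_{i_1\cdots i_k} Q_m h = m^{(k)} Q_{m-k}(\Delta_{i_k} \cdots \Delta_{i_1} H)$ together with a $k$-fold coupling bound $\abs{\Delta_{i_k}\cdots \Delta_{i_1} H(n)} \le k!\,\abs{h}_k/\prod_{j=1}^k (s+\abs{n}+j)$; combining this with $\int_0^\infty q_m(t)\,dt = 1/\lambda_m$ and applying a partial-fractions telescoping in $m$ collapses the sum to the clean factor $1/[k(s+k-1)]$, yielding~\eqref{19}. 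The Lipschitz estimate $\abs{f}_{m,1} \le \abs{h}_{m,1}/[m(s+m-1)]$ follows by exactly the same recursion applied to one further discrete-difference/Lipschitz layer.

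The main obstacle is twofold: carefully tracking the shifted parameters $\ta + n$ through the iterated coupling so that the successive differences $\Delta_{i_1},\ldots,\Delta_{i_k}$ telescope as claimed; and verifying the algebraic identity
\begin{equation*}
\sum_{m \ge k} \frac{2\cdot k!\, m^{(k)}}{m(m+s-1)\prod_{j=1}^k (s+m-k+j)} = \frac{1}{k(s+k-1)}.
\end{equation*}
Both are essentially bookkeeping but need care in coordinating the independent exponential clocks across layers. Once~\eqref{19} is in hand, $f$ has the claimed regularity on $\Delta_K$ by construction; when $m \ge 2$, the continuous extension of $f$ and all derivatives appearing in~\eqref{16} to $\-\Delta_K$ is immediate from convexity of $\Delta_K$ and the extension property recorded after~\eqref{2}, so~\eqref{16} is validated on the closure.
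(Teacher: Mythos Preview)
Your overall architecture matches the paper's: both use Barbour's generator approach with the Griffiths--Tavar\'e representation, reduce to a sum over the death-process levels weighted by $1/\lambda_m$, and bound $k$-fold differences via a coupling of Dirichlets with incremented parameters. Your packaging is slightly different---the paper bounds finite differences of $f$ in the \emph{spatial} variable $\tx$ via an augmented multinomial/Dirichlet coupling (their $(\delta_1,\dots,\delta_{m+1},\tD_\tx)\sim\Dir(\tB,\ta+\tN)$) and then invokes a difference-to-derivative lemma, whereas you differentiate the polynomials $Q_m h$ directly and bound discrete differences of $H(n)$ via a gamma/exponential coupling. Both routes land on the same telescoping sum, and your polynomial/exponential route has the advantage of avoiding the separate lemma passing from uniform difference bounds to smoothness.

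There is, however, a concrete computational error that breaks the argument as written. Your claimed bound
\[
\babs{\Delta_{i_k}\cdots\Delta_{i_1} H(n)}\;\leq\;\frac{k!\,\abs{h}_k}{\prod_{j=1}^k (s+\abs{n}+j)}
\]
is not what your own coupling gives. With $k$ independent exponentials $E_1,\dots,E_k$ added simultaneously (so that all $2^k$ coupled Dirichlets share the common denominator $\abs{Y}+\sum_\ell E_\ell$), the $k$-fold difference of $h$ is bounded by $\abs{h}_k\prod_j E_j/(\abs{Y}+\sum_\ell E_\ell)$, and the mixed first moment of a $\Dir(1,\dots,1,s+\abs{n})$ vector is
\[
\IE\prod_{j=1}^k\frac{E_j}{\abs{Y}+\sum_\ell E_\ell}
=\frac{1}{\prod_{j=0}^{k-1}(s+\abs{n}+k+j)}\,;
\]
there is no $k!$, and the denominator is shifted by $k$ relative to yours. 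With your stated bound the algebraic identity you write is simply false: already for $k=2$, $s=1$ the left side has $(m+s-1)^2$ in the denominator and exceeds $\tfrac14$ after three terms. With the correct bound and $\abs{n}=m-k$ one obtains exactly the paper's sum
\[
\sum_{m\geq 1}\frac{(m-1)\cdots(m-k+1)}{(m+s-1)(m+s)\cdots(m+s+k-1)}=\frac{1}{k(s+k-1)},
\]
which telescopes cleanly. (Also note a stray factor of $2$ in your displayed identity: the left side equals $2/[k(s+k-1)]$, the extra $\tfrac12$ in the definition of $f$ then cancels it.) Finally, the Lipschitz case $\abs{f}_{m,1}$ cannot be handled by ``one more $\partial$''; you need to run the \emph{same} $m$-fold recursion at two points $\tx,\ty$ and use $\abs{h}_{m,1}$ on the last difference, or equivalently take the $(m{+}1)$-st finite difference and invoke the Lipschitz version of the multi-difference inequality---this is what the paper does.
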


\begin{remark}
The Dirichlet distribution is a multivariate generalization of the Beta distribution for which Stein's method 
has recently been developed \cite{Dobler2012, Dobler2015} \cite{Goldstein2013} where bounds are derived
for the~$K=2$ case of the Stein equation used here. Direct comparisons are difficult in general since 
typically different derivatives of the test function appear. However one easily comparable bound is \cite[Proposition~4.2(b)]{Dobler2015} that~$|f|_1 \leq |h|_1/s$, which is the same as our bound in this case. 
In general, the bounds appearing in these other works are quite complicated, involving different expressions for different regions of the parameter space, whereas 
our bounds are very clean and have a simple relationship to the parameters. Furthermore our bounds apply in the multivariate setting.
\end{remark}

\begin{proof}[Proof of Theorem~\ref{THM5}] Throughout the proof we make the simplifying assumption that~$\IE h(\tZ)=0$ so that~$\tilde h = h$.
Following the generator approach of \cite{Barbour1990}, \cite{Gotze1991}; see also \cite[Appendix~B]{Gorham2016}; let~$\tx\in\-\Delta_K$, 
let~$(\Zxt)_{t\geq0}$ be the Wright-Fisher diffusion defined by the generator~$\mathcal{A}$ with~$\tZ_\tx(0) = \tx$,
and let~$f$ be as defined in~\eq{17}.

\smallskip
\noindent{\bf Construction of semigroup.}
The key to our bounds is a construction of the marginal variable~$\Zxt$ from \cite{Griffiths1983} \cite{Tavare1984}; see also the introduction of~\cite{Barbour2000}.
Let~$L_t$ be a pure death process on~$\{0, 1, \ldots\}\cup\{\infty\}$ started at~$\infty$ with death rates 
\ben{\label{20}
q_{i, i-1}= \frac{1}{2} i(i-1+s).
}
Denote by~$\MN_K(n; p_1,\dots,p_K)$ the~$K$-dimensional multinomial distribution with~$n$ trials and probabilities~$p_1,\dots,p_K$; by slight misuse of notation, we write~$\MN_K(L_t;\tx,x_K)$ to be 
short for~$\MN_K(L_t;x_1,\dots,x_{K-1},x_K)$. Conditional on~$L_t$, let~$\tN\sim\MN_K(L_t;\tx,x_K)$, where~$x_K=1-\sum_{i=1}^{K-1}x_i$.
Then,
\be{
	\law\bklr{\Zxt\bmid L_t,\tN} \sim \Dir(\ta+\tN).
}

\smallskip
\noindent{\bf Existence of solution to Stein equation on~$\boldsymbol{\Delta_K}$ and bound (\ref{18}).}
For~$n\geq 1$, let~$Y_n$ be the time the process~$L_t$ spends in state~$n$ and note that~$Y_n$ is exponentially distributed with rate~$n(n-1+s)/2$. Since
\be{
	\sum_{n\geq 1} \IE Y_n = \sum_{n\geq 1}\frac{2}{n(n+s-1)} \leq  \frac{2(s+1)}{s},
}
the random variable~$T = \inf\{t>0\,:\,L_t=0\} = \sum_{n\geq 1}Y_n$ is finite almost surely and has finite expectation. Observing that~$\IE\bklr{{ h}(\Zxt)\big| L_t=0}=0$ since, given~$L_t=0$, we have~$\Zxt \sim \Dir(\ta)$, it follows that
\besn{\label{457}
\int_0^\infty \babs{\IE\bklr{ h(\Zxt) }} dt &\leq \int_0^\infty \norm{h}_\infty \IP(L_t>0) dt \\
	&\leq \norm{h}_\infty \int_0^\infty \IP(T>t) dt =\norm{ h}_\infty\IE T < \infty.
}
Thus,
$f$ in \eq{17} is well-defined. 

To show that $f$ is in the domain of~$\cal{A}$ and satisfies $\mathcal{A}f= h$ under the assumption that~$h\in\BC(\-\Delta_K)$, the Banach space of bounded and continuous functions equipped with sup-norm, we follow the argument of \cite[Pages~301-2]{Barbour1990} also used in \cite[Appendix~B]{Gorham2016}. First,
\cite[Theorem~1]{Ethier1976} implies that
the semigroup $(T_t)_{t\geq0}$ defined by $T_t g(\tx)=\IE g(\Zxt)$ for $g\in\BC(\-\Delta_K)$ is strongly continuous. Note also that $\BC^{m,1}(\-\Delta_K)\subset\BC(\-\Delta_K)$ for all $m\geq 0$. 
We can therefore apply
\cite[Proposition~1.5(a), Page~9]{Ethier1986}, which implies that $f\s u(\tx):=- \frac{1}{2}\int_0^u \IE{h}(\Zxt)  dt$ is in the domain of~$\mathcal{A}$ and satisfies
\be{
\mathcal{A} f\s u (\tx)=h(\tx)-\IE h(\Zx(u)).
}
Furthermore, \cite[Corollary~1.6, Page~10]{Ethier1986} implies that $\mathcal{A}$ is a closed operator, 
so it is enough to show that as $u\to\infty$,
\ben{\label{458}
\norm{f\s u-f}_\infty\to0 \,\,\, \mbox{ and } \,\,\, \norm{\mathcal{A}f\s u-h}_{\infty}\to 0.
}
By definitions,~\eq{458} is implied by
\be{
\sup_{\tx\in\-\Delta_K}\int_u^\infty \babs{\IE\bklr{ h(\Zxt) }} dt\to 0 \,\,\, \mbox{ and } \,\,\, \sup_{\tx\in\-\Delta_K}\IE  h(\Zx(u))\to 0,
}
as $u\to\infty$. But the first limit follows from~\eq{457} and the second is because
\be{
\sup_{\tx\in\-\Delta_K}\IE  h(\Zx(u))\leq \norm{h}_\infty \sup_{\tx\in\-\Delta_K} \dtv\bklr{\law(\Zx(u)), \Dir(\ta)} \leq \norm{h}_\infty \IP(L_u>0)\to 0.
}

The boundedness of the solution follows essentially from the computations above, but we give a slightly different argument 
in detail, since a similar but more complicated one is used later. Compute
\bes{
	-2f(\tx)
	& =\int_0^\infty \IE  h(\Zxt) dt 
	 =\IE \int_0^\infty \IE \bklr{h(\Zxt)\big|L_t} dt \\
	& =\IE \int_0^\infty \sum_{n\geq1}  \IE\bklr{h(\Zxt)\big| L_t=n}\I[L_t=n] dt\\
	 &=\IE \sum_{n\geq1} \int_0^\infty \IE\bklr{h(\Zx(1))\big| L_1=n}\I[L_t=n] dt \\
	& =\IE \sum_{n\geq1} \IE\bklr{h(\Zx(1))\big| L_1=n}Y_n 
	 =\sum_{n\geq 1} \IE\bklr{ h(\Zx(1))\big| L_1=n}\IE Y_n,
}
where we have used dominated convergence multiple times to interchange expectation, integration and summation, along with the fact that~$\IE \bklr{h(\Zxt)\big|L_t=n}$ only depends on~$n$ and not on~$t$ and can therefore be replaced by~$\IE \bklr{h(\Zx(1))\big|L_1=n}$ (or with~$t$ being replaced by any other fixed positive time).
This leads to
\be{
\abs{f(\tx)}\leq \frac{1}{2}\norm{ h}_\infty \sum_{n\geq 1} \IE Y_n
= \norm{ h}_\infty \sum_{n\geq 1}\frac{1}{n(n-1+s)}\leq \frac{(s+1)}{s}\norm{ h}_\infty,
}
which is \eq{18}.

\smallskip
\noindent{\bf Preliminaries for partial derivatives.}
To show the existence and bounds for the partial derivatives, 
 we need some couplings. Let $\te_i$ denote the unit vector with a one in the $i$th coordinate (and zeros in all others with dimension from context).
Fix~$m\geq 0$ and~$1\leq i_1,\dots,i_{m+1}\leq K-1$. Let~$\tx = (x_1, x_2, \ldots, x_{K-1})\in\Delta_K$. Choose~$\eps_1,\dots,\eps_{m+1}>0$ arbitrarily, but small enough that~$x_K:=1-\sum_{j=1}^{K-1} x_j>\sum_{j=1}^{m+1}\eps_j$, or equivalently, that~$\tx+\sum_{j=1}^{m+1}\eps_j\te_{i_j}\in\Delta_K$. Then, proceed with the following steps. 
\begin{enumerate}[$(i)$]
\item Let~$L_t$ be the pure death process as described above. 
\item Given~$L_t$, let~$\tM :=(\tB, \tN)\sim \MN_{m+1+K}(L_t;\e_1, \dots, \e_{m+1}, \tx , x_K-\sum_{j=1}^{m+1}\eps_j)$, where~$\tB=(B_1,\dots,B_{m+1})$ and~$\tN=(N_1,\dots,N_K)$.
\item Given~$L_t$ and~$\tM$, let
\ben{\label{21}
( \d_1, \dots, \d_{m+1}, \tD_\tx) \sim \Dir(\tB, \ta+\tN).
}
\item  Set~$\teps_j = \e_j \te_{i_j}$ for~$1\leq j\leq m+1$. 
As described immediately below, basic facts 
about the multinomial and Dirichlet distributions imply that
\ben{\label{22}
\tD_\tx \eqlaw \Zxt
}
and that
\ben{\label{23}
\tD_{\tx} + \sum_{j\in A} \d_{j}\te_{i_j} \eqlaw \tZ_{\tx+\sum_{j\in A} \teps_{j}}(t)
}
for any subset~$A\subset\{1,\dots,m+1\}$.
\end{enumerate}
To see why~\eq{22} and~\eq{23} are true, use the
following standard facts.
\begin{itemize}
\item Let~$(\xi_1, \dots, \xi_{p-1}) \sim \Dir(y_1,\ldots,y_p)$. If~$A=\{i_1,\ldots,i_j\}\subset\{1,\ldots, p-1\}$
is any subset of indices, then
\be{
\bklr{\xi_{i_1},\ldots, \xi_{i_j}  } \sim \Dir\bbklr{y_{i_1},\ldots,y_{i_j}, y_p+\sum_{k\not\in A} y_k}.
}
Furthermore, letting~$\boldsymbol{\xi}^{(k)}\in \IR^{p-2}$ denote~$(\xi_1, \dots, \xi_{p-1})$ with the~$k$th coordinate
removed, 
we have for~$i<k$ (and a similar statement for $i>k$),
\be{
\boldsymbol{\xi}^{(k)}+\te_i \xi_k
\sim \Dir\bklr{y_{1},\ldots,y_{i-1}, y_{i}+y_{k},y_{i+1} \ldots, y_{k-1}, y_{k+1}, \ldots, y_{p} }.
} 
\item Let~$(\zeta_1,\ldots,\zeta_p)\sim \MN_p(b; y_1,\ldots,y_p)$. If~$A=\{i_1,\ldots,i_j\}\subset\{1,\ldots, p\}$
is any 
subset of indices, then
\be{
\bbklr{\zeta_{i_1},\ldots, \zeta_{i_j}, \sum_{k\not\in A} \zeta_k} \sim \MN_{j+1}\bbklr{b; y_{i_1},\ldots,y_{i_j}, \sum_{k\not\in A} y_k}.
}
Furthermore, letting~$\boldsymbol{\zeta}^{(k)}\in \IR^{p-1}$ denote~$(\zeta_1, \dots, \zeta_{p})$ with the~$k$th coordinate
removed, 
we have for~$i<k$ (and a similar statement for $i>k$),
\be{
\boldsymbol{\zeta}^{(k)}+\te_i \zeta_k
\sim \MN_{p-1}\bklr{b; y_{1},\ldots,y_{i-1}, y_{i}+y_{k},y_{i+1} \ldots, y_{k-1}, y_{k+1}, \ldots, y_{p} }.
} 
\end{itemize}
The first item above follows from the usual decomposition of the components of the Dirichlet distribution in terms of ratios of gamma variables, and the second is straightforward from the 
probabilistic description of the multinomial distribution.
%
%

\smallskip
\noindent {\bf Existence of partial derivatives and bounds (\ref{19}).} To ease notation, for vectors $\tx,\ty$ and a function $g$, define $\Delta_{\ty}g(\tx)=g(\tx+\ty)-g(\ty)$ (context should clarify when we mean the simplex or the difference operator). Assume now that $h\in\BC^{m,1}(\-\Delta_K)$ for some $m\geq 0$, let~$1\leq i_1,\dots,i_{m+1}\leq K-1$, and recall the coupling and associated notation defined above. For any~$1\leq k\leq m+1$, we have
\bes{
	\frac{\babs{\Delta_{\teps_{1}}\cdots\Delta_{\teps_{k}}f(\tx)}}{{\eps_1\cdots\eps_k}}
	& = \frac{1}{2{\eps_1\cdots\eps_k}}\bbbabs{\int_{0}^\infty \IE \bklr{\Delta_{\teps_{i}}\cdots\Delta_{\teps_{k}}\bklr{{ h}(\tZ_{\cdot}(t))}(\tx)}dt} \\
	& = \frac{1}{2{\eps_1\cdots\eps_k}}\bbbabs{\int_{0}^\infty \IE \bklr{\Delta_{\teps_{i}}\cdots\Delta_{\teps_{k}}{{ h}(\tD_{\tx})}}dt} \\
	& \leq \frac{\abs{h}_{k-1,1}}{2{\eps_1\cdots\eps_k}}\int_{0}^\infty {\IE \bklr{\delta_1\cdots\delta_m}}dt,
}
where in the last step we have applied Lemma~\ref{lem2}. Now using formulas for Dirichlet and multinomial moments, we have
\be{
	\IE \bklr{\delta_1\cdots\delta_m\bmid L_t,M} = \frac{B_1\cdots B_m}{(L_t+s)(L_t+s+1)\cdots(L_t+s+m-1)}
}
and
\be{
	\IE \bklr{B_1\cdots B_m\bmid L_t} = \eps_1\cdots\eps_k L_t(L_t-1)\cdots(L_t-m+1).
}
Thus, 
\bes{
	\frac{1}{\eps_1\cdots\eps_k}\int_0^\infty  \IE\bklr{\delta_1\cdots\delta_k} dt 
	& = \sum_{n\geq 1}\frac{n(n-1)\cdots(n-k+1)}{(n+s)(n+s+1)\cdots(n+s+k-1)}\IE Y_n \\
	& = \sum_{n\geq 1}\frac{2(n-1)\cdots(n-k+1)}{(n+s-1)(n+s)\cdots(n+s+k-1)} = \frac{2}{k(s+k-1)}.
}
Hence, it follows that
\be{\label{24}
\frac{\babs{\Delta_{\teps_{1}}\cdots\Delta_{\teps_{k}}f(\tx)}}{{\eps_1\cdots\eps_k}} \leq \frac{\abs{h}_{k-1,1}}{k(s+k-1)} =: M_k
}
Since~$\tx$ and~$\eps_1,\cdots,\eps_k$ are arbitrary, 
\eq{31} in Lemma~\ref{lem5} is satisfied, and we conclude that~$f\in\BC^{m,1}(\-\Delta_K)$ and that
for $k=1,\ldots, m+1$,
$\abs{f}_{k-1,1} \leq \frac{\abs{h}_{k-1,1}}{k(s+k-1)}$, which is \eq{19}.

\smallskip
\noindent {\bf Extension to~$\boldsymbol{\-\Delta_K}$.} Assume now~$m\geq 2$. Since~$f\in\BC^{m,1}(\-\Delta_K)$, we have in particular that the~$f_i$ and the~$f_{ij}$ can be extended continuously and uniquely to the boundary of~$\-\Delta_K$. Since the left hand side of \eq{16} only consists of finite sums and continuous transformations of the~$f_i$ and~$f_{ij}$ and is equal to the right hand side of \eq{16} on~$\Delta_K$, it follows
that \eq{16} also holds on the boundary of~$\-\Delta_K$.
\end{proof}

\subsection{Proof of Theorem~\ref{THM3}}

\begin{proof}[Proof of Theorem~\ref{THM3}] Since $h\in\BC^{2,1}(\-\Delta_K)$, Theorem~\ref{THM5} implies that there is  a function $f\in\BC^{2,1}(\-\Delta_K)$ solving \eq{16}. Exchangeability  implies
\bes{
0&=\tsfrac{1}{2}\IE[ (\tW'-\tW)^t \Lambda^{-t} (\nabla f(\tW')+\nabla f(\tW))] \\
	&=\IE[ (\tW'-\tW)^t \Lambda^{-t}\nabla f(\tW))] +\tsfrac{1}{2}\IE[ (\tW'-\tW) ^t\Lambda^{-t} (\nabla f(\tW')-\nabla f(\tW))],
}
and applying the linearity condition~\eq{7} yields
\bes{
&\IE [(\ta-s\tW)^t \nabla f(\tW)] \\
&\qquad= -\tsfrac{1}{2}\IE[ (\tW'-\tW)^t \Lambda^{-t} (\nabla f(\tW')-\nabla f(\tW))]-\IE [\tR^t\Lambda^{-t}\nabla f(\tW)]. 
}
By the fundamental theorem of calculus,
\bes{
	f_i(\tw') & = f_i(\tw) + \int_{0}^1\sum_{j=1}^{K-1}(w'_j-w_j)f_{ij}(\tw+(\tw'-\tw)t)dt \\
	& = f_i(\tw) + \sum_{j=1}^{K-1}(w'_j-w_j)f_{ij}(\tw) + \sum_{j=1}^{K-1}\int_{0}^1(w'_j-w_j)\bklr{f_{ij}(\tw+(\tw'-\tw)t)-f_{ij}(\tw)}dt.
}
Since~$f_{ij}$ is Lipschitz continuous, 
\be{
	\babs{f_{ij}(\tw+(\tw'-\tw)t)-f_{ij}(\tw)}
	\leq \abs{f}_{2,1}t\sum_{k=1}^{K-1}\abs{w'_k-w_k};
}	
hence, there are~$\tilde{Q}_{ijk} = \tilde{Q}_{ijk}(\tw,\tw',f)$ such that~$\abs{\tilde{Q}_{ijk}}\leq \abs{f}_{2,1}$ and 
\bes{
&(\tw'-\tw)^t \Lambda^{-t} (\nabla f(\tw')-\nabla f(\tw)) \\
	&\qquad= \sum_{m,i,j} (\Lambda^{-1})_{i,m} (w_m'-w_m)(w_j'-w_j) f_{i j}(\tw) \\
		&\qquad\qquad+\frac{1}{2}\sum_{m,i,j,k} (\Lambda^{-1})_{i,m} (w_m'-w_m)(w_j'-w_j)(w_k'-w_k)\tilde{Q}_{ijk}.
}
Combining the previous three displays, we have
\ban{
&\IE\left[ \sum_{i,j=1}^{K-1} W_i(\delta_{i j}-W_j) f_{i j}(\tW)+\sum_{i=1}^{K-1}(a_i-s W_i) f_i(\tW)\right] \notag \\
&\quad=\IE\left[\sum_{i,j=1}^{K-1} \left(W_i(\delta_{i j}-W_j) -\frac{1}{2}\sum_{m=1}^{K-1}(\Lambda^{-1})_{i,m} (W_m'-W_m)(W_j'-W_j) \right)f_{i j}(\tW)\right]\notag  \\
&\qquad\quad-\frac{1}{2}\sum_{m,i,j,k}(\Lambda^{-1})_{i,m} \IE\left[(W_m'-W_m)(W_j'-W_j)(W_k'-W_k)\tilde{Q}_{ijk}\right] \notag \\
&\qquad\quad\qquad -\IE \left[\sum_{i,j} R_j(\Lambda^{-1})_{i,j}f_i(\tW)\right].\notag
}
We can further simplify the first summand above to
\bes{
\IE\left[\sum_{i,j,m} (\Lambda^{-1})_{i,m} \left( \Lambda_{m,i} W_i(\delta_{i j}-W_j) -\frac{1}{2} (W_m'-W_m)(W_j'-W_j) \right)f_{i j}(\tW)\right],
}
and now the theorem follows from judicious use of the triangle inequality and the bound \eq{19} from Theorem~\ref{THM5}.

 If~$\Lambda$ is a multiple of the identity matrix
 then, following ideas of \cite{Rollin2008}, 
  the proof is nearly identical but started from
\bes{
f(\tW)-f(\tW')&=\sum_{i=1}^{K-1} (W_i'-W_i) f_i(\tW)+\frac{1}{2}\sum_{i,j=1}^{K-1} (W_i'-W_i)(W_j'-W_j) f_{i j}(\tW) \\
	&\qquad+\frac{1}{6} \sum_{i,j,k=1}^{K-1}(W_i'-W_i)(W_j'-W_j) (W_k'-W_k) \tilde S_{ijk},
}
where~$\tilde S_{ijk}=\tilde S_{ijk}(\tW,\tW',f)$ satisfies $\abs{\tilde S_{ijk}}\leq \abs{f}_{2,1}$. 
From here, the proof follows as above by taking expectation, noting that~$\IE[f(\tW)-f(\tW')]=0$ (since~$\law(\tW)=\law(\tW')$)
and that the expectation of the first term on the right hand side above can be simplified using the linearity condition~\eq{7}.

The bound on the convex set distance \eq{9} directly follows from \eq{8} and Lemma~\ref{lem9}.
\end{proof}

\subsection{Auxiliary results}

In what follows, we define, as usual,~$\Delta_\ty  g(\tx)=g(\tx +\ty)-g(\tx)$ and denote by~$\te_i$ the~$i$th unit vector in~$\IR^n$ .

\begin{lemma} \label{lem2}
Let~$U\subset\IR^n$ be a convex open set, and let~$g\in\BC^{m,1}(U)$ for some~$m\geq 0$. Let~$\tx\in U$, let~$1\leq k\leq m+1$, and let~$\ty\s 1, \ldots, \ty\s k\in\IR^n$ be such that~$\tx + \sum_{i=1}^j \ty\s{j} \in U$ for all~$1\leq j\leq k$.
Then, if~$k\leq m$, 
\be{
\left|\left(\prod_{i=1}^k \Delta_{\ty\s i}\right) g(\tx)\right|\leq \abs{g}_{k} \prod_{i=1}^k \norm{\ty\s i}_1,
}
and if~$k=m+1$, the same estimate holds with~$\abs{g}_{k}$ replaced by~$\abs{g}_{m,1}$ on the right hand side.
\end{lemma}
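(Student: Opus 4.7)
The plan is to prove both parts by iterating the fundamental theorem of calculus applied to the difference operator. The starting identity, valid whenever $g$ has a continuous first partial and the segment from $\tx$ to $\tx+\ty$ lies in $U$, is
\be{
\Delta_\ty g(\tx) = \int_0^1 \sum_{j=1}^n y_j\, \partial_j g(\tx + t\ty)\, dt.
}
Since $g\in\BC^{m,1}(U)$ has $m$ bounded continuous partial derivatives, and since convexity of $U$ combined with the hypothesis on the partial sums ensures every intermediate argument remains in $U$, this identity can be iterated.

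For the case $k\leq m$, I would apply the identity $k$ times, peeling off one $\Delta_{\ty\s i}$ at each step. Each application trades a $\Delta$ for an integration variable $t_i\in[0,1]$ together with an additional partial derivative on $g$, and the remaining difference operators pass through the integral and sum since they act only on the argument of $g$. After $k$ steps this produces
\be{
\bbbklr{\prod_{i=1}^k \Delta_{\ty\s i}} g(\tx) = \int_{[0,1]^k}\sum_{j_1,\dots,j_k=1}^n \bbklr{\prod_{i=1}^k y\s{i}_{j_i}}\frac{\partial^k g}{\partial x_{j_1}\cdots\partial x_{j_k}}\bbklr{\tx + \sum_{i=1}^k t_i \ty\s i}\, dt_1\cdots dt_k.
}
Bringing absolute values inside, bounding the $k$-th partial uniformly by $\abs{g}_k$, and using $\sum_{j=1}^n \abs{y\s{i}_j} = \norm{\ty\s i}_1$ for each $i$ then yields the claimed estimate.

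For the case $k=m+1$ the same procedure can only be carried out $m$ times, since the $m$-th partials are only Lipschitz rather than differentiable. Leaving one operator, say $\Delta_{\ty\s 1}$, as the last remaining difference (permissible by the commutativity of difference operators), I would bring it inside the integral and sum and apply the defining Lipschitz bound on the $m$-th mixed partial, which produces a factor $\abs{g}_{m,1}\norm{\ty\s 1}_1$. Combined with the factor $\norm{\ty\s 2}_1\cdots\norm{\ty\s{m+1}}_1$ coming from the remaining integrals and sums as in the first case, this yields the stated estimate. The only delicate point anywhere in the argument is checking that every argument appearing in the iterated integrals lies in $U$, which is a routine consequence of convexity and the partial-sum hypothesis rather than a substantive obstacle.
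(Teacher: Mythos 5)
Your argument is correct and is essentially identical to the paper's: both iterate the fundamental theorem of calculus identity $\Delta_\ty g(\tx) = \int_0^1 \sum_j y_j\,\partial_j g(\tx+t\ty)\,dt$ to produce the multi-integral representation, then bound by $\abs{g}_k$ in the case $k\le m$, and in the case $k=m+1$ apply the same representation to $m$ of the operators and treat the one remaining difference with the Lipschitz constant $\abs{g}_{m,1}$. The only cosmetic difference is which difference operator you hold back (you keep $\Delta_{\ty^{(1)}}$, the paper keeps $\Delta_{\ty^{(m+1)}}$), which is immaterial by commutativity.
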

\begin{proof} Assume~$k\leq m$.
Applying the easy identity
\be{
g(\tx +\ty)-g(\tx)=\int_0^1 \sum_{i=1}^n \frac{\partial g}{\partial x_i} (\tx + t\ty) \ty_i dt
}
repeatedly~$k$ times yields
\ben{\label{25}
\left(\prod_{i=1}^k \Delta_{\ty\s i}\right) g(\tx)=\int_{[0,1]^k} \sum_{i_1, \ldots, i_k=1}^{n}\frac{\partial^k g}{\prod_{ j =1}^k \partial x_{i_j }} \left(\tx + \sum_{j=1}^k \ty\s{j} t_j\right)\prod_{j=1}^k y\s{j}_{i_j} d\textbf{t}.
}
Thus
\be{
\left|\left(\prod_{i=1}^k \Delta_{\ty\s i} \right) g(\tx)\right| \leq \abs{g}_k \sum_{i_1, \ldots, i_k=1}^{n} \prod_{j=1}^k \abs{ y\s{j}_{i_j}} =  \abs{g}_k \prod_{i=1}^k \norm{\ty\s i}_1. 
}
For~$k=m+1$, use~\eq{25} for $k=m$ to find
\bes{
\left(\prod_{i=1}^{m+1} \Delta_{\ty\s i}\right) g(\tx)&=\int_{[0,1]^m} \sum_{i_1, \ldots, i_m=1}^{n}\frac{\partial^m \Delta_{\ty\s{m+1}}g}{\prod_{ j =1}^m \partial x_{i_j }} \left(\tx + \sum_{j=1}^m \ty\s{j} t_j\right)\prod_{j=1}^k y\s{j}_{i_j} d\textbf{t} \\
&=\int_{[0,1]^m} \sum_{i_1, \ldots, i_m=1}^{n}\Delta_{\ty\s{m+1}}\frac{\partial^m g}{\prod_{ j =1}^m \partial x_{i_j }} \left(\tx + \sum_{j=1}^m \ty\s{j} t_j\right)\prod_{j=1}^k y\s{j}_{i_j} d\textbf{t}.
}
Since the $m+1$ partials are Lipschitz, we find
\be{
\bbbbabs{\Delta_{\ty\s{m+1}}\frac{\partial^m g}{\prod_{ j =1}^m \partial x_{i_j }}\left(\tx + \sum_{j=1}^m \ty\s{j} t_j\right)}
	\leq \abs{g}_{m,1} \norm{\ty\s{m+1}}_1,
}
and the result now easily follows by combining this with the previous display.
\end{proof}

\begin{lemma}\label{lem3} Let~$U\subset\IR^n$ be an open convex set, and let~$g:U\to\IR$ be a function. Then,~$g$ is~$M$-Lipschitz continuous with respect to the~$L_1$-norm, if and only if it is 
coordinate-wise~$M$-Lipschitz continuous; that is,  
\be{
	\sup_{x\in U}\sup_u \frac{\abs{\Delta_u g(x)}}{\abs{u}}\leq M.
}
\end{lemma}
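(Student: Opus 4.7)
The plan is to handle the two directions separately, with the backward direction being the only substantive one.

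For the forward direction, suppose $g$ is $M$-Lipschitz in the $L_1$-norm on $U$. For any $\tx\in U$, any coordinate index $i$, and any scalar $u$ with $\tx+u\te_i\in U$, we have $\norm{u\te_i}_1=\abs{u}$, so the $L_1$-Lipschitz hypothesis yields $\abs{\Delta_{u\te_i} g(\tx)}\leq M\abs{u}$, which is the coordinate-wise bound.

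For the backward direction, I would first prove a local statement: if $B_\infty(\tz,\delta)\subset U$ and $\th\in\IR^n$ with $\norm{\th}_\infty<\delta$, then $\abs{g(\tz+\th)-g(\tz)}\leq M\norm{\th}_1$. The idea is to interpolate via the coordinate-wise polygonal path $\tz_0=\tz,\,\tz_k=\tz+\sum_{j\leq k}h_j\te_j$. Every vertex $\tz_k$ satisfies $\norm{\tz_k-\tz}_\infty\leq\norm{\th}_\infty<\delta$, and each segment $[\tz_{k-1},\tz_k]$ is a coordinate-direction segment contained in the convex set $B_\infty(\tz,\delta)\subset U$, so the coordinate-wise Lipschitz hypothesis applies and $\abs{g(\tz_k)-g(\tz_{k-1})}\leq M\abs{h_k}$; summing via the triangle inequality gives $\abs{g(\tz+\th)-g(\tz)}\leq M\norm{\th}_1$.

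Next, for arbitrary $\tx,\ty\in U$, I would use the convexity of $U$ together with a compactness argument to tile the segment from $\tx$ to $\ty$ by such $L_\infty$-balls. Since $U$ is open and convex, for each $t\in[0,1]$ the point $\tx_t=(1-t)\tx+t\ty$ lies in $U$, and the map $t\mapsto \mathrm{dist}_\infty(\tx_t,\IR^n\setminus U)$ is continuous and strictly positive on the compact interval $[0,1]$; let $\delta_*>0$ denote its minimum. Choose a partition $0=t_0<t_1<\cdots<t_N=1$ fine enough that $(t_{k+1}-t_k)\norm{\ty-\tx}_\infty<\delta_*$. Then $\tx_{t_{k+1}}\in B_\infty(\tx_{t_k},\delta_*)\subset U$, so the local estimate above yields
\be{
	\abs{g(\tx_{t_{k+1}})-g(\tx_{t_k})}\leq M\norm{\tx_{t_{k+1}}-\tx_{t_k}}_1=M(t_{k+1}-t_k)\norm{\ty-\tx}_1.
}
Summing over $k=0,\dots,N-1$ collapses the right-hand side to $M\norm{\ty-\tx}_1$, completing the proof.

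The only subtle point is ensuring the coordinate-by-coordinate path actually remains in $U$; this is precisely why the argument must be localised inside $L_\infty$-balls (which are convex and invariant under coordinate-wise excursions of small enough size) and then globalised via the compactness/positive-distance argument along the segment guaranteed by convexity of $U$.
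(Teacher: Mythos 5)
Your proof is correct and fleshes out the paper's one-sentence sketch ("convexity and a telescoping sum argument along the coordinates") with precisely the right details: the localization into $L_\infty$-balls to keep the coordinate-wise polygonal path inside $U$, followed by the compactness/chaining argument along the segment $[\tx,\ty]$. This is the same underlying approach the paper has in mind; you have simply made explicit the subtle point — that the naive staircase path from $\tx$ to $\ty$ need not stay in an arbitrary open convex set — which the paper glosses over.
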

\begin{proof} It is clear that if~$g$ is~$M$-Lipschitz continuous, then it is in particular~$M$-Lipschitz continuous in each coordinate. The reverse direction is easily proved using convexity and a telescoping sum argument along the coordinates. 
\end{proof}

We were not able to locate the next two lemmas in the literature; hence, we give self-contained proofs.
There is strong resemblance with the theory of \emph{bounded~$k$-th variation}, see for example \cite[Theorem~11]{Russell1973}, but we were not able to find a result that would directly apply to our situation; we also refer to recent survey textbooks \cite{Mukhopadhyay2012} and \cite{Appell2014}.

In what follows, we assume that~$u$ and~$v$ appearing in terms like~$\Delta_u f(z)$,~$\Delta_u\Delta_v f(z)$,~$\Delta_{u\eps_i} g(\tx)$ and~$\Delta_{u\eps_i}\Delta_{v\eps_j} g(\tx)$ are such that~$z+u$,~$z+u+v$,~$\tx+u\te_i$ and~$\tx+u\te_i+v\te_j$ are within the domains of the functions being evaluated.

\begin{lemma}\label{lem4} Let~$f:(a,b)\to\IR$ be a function. If
\ben{\label{26}
	M_1:=\sup_{z}\sup_{u}\frac{\abs{\Delta_uf(z)}}{\abs{u}}<\infty,		\qquad
	M_2:=\sup_{z}\sup_{  u,v}\frac{\abs{\Delta_u\Delta_v f(z)}}{\abs{uv}}<\infty,		
}	
then~$f$ is differentiable and~$f'$ is~$M_2$-Lipschitz-continuous. 
\end{lemma}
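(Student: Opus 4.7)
The plan is to exploit the hypothesis on~$M_2$ as a uniform-in-$u$ spatial Lipschitz bound on the divided difference $\phi(u,z):=\Delta_u f(z)/u$, combine this with almost-everywhere differentiability implied by~$M_1<\infty$, and then pass from a dense set of differentiability points to all of~$(a,b)$ by a short three-$\eps$ argument.

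First, I would note that $M_1<\infty$ makes $f$ globally $M_1$-Lipschitz on~$(a,b)$, and hence (by Lebesgue's theorem on differentiation of monotone functions, applied after writing a Lipschitz function as a difference of monotone ones) $f$ is differentiable at almost every point of~$(a,b)$; let $E\subset(a,b)$ denote this (dense) set of differentiability points. Next, the elementary identity $\Delta_v\Delta_u f(z)=u\bklr{\phi(u,z+v)-\phi(u,z)}$ combined with the $M_2$ hypothesis gives
\be{
	\babs{\phi(u,z+v)-\phi(u,z)}\leq M_2\abs{v},
}
so for every fixed $u\ne 0$ the map $\phi(u,\cdot)$ is $M_2$-Lipschitz, and crucially this bound is \emph{uniform in~$u$}.

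For $z\in E$ we have $\phi(u,z)\to f'(z)$ as~$u\to 0$; passing to this limit in the previous display for $z,z'\in E$ yields $\babs{f'(z)-f'(z')}\leq M_2\abs{z-z'}$. Thus the restriction of~$f'$ to~$E$ is $M_2$-Lipschitz on a dense subset of~$(a,b)$ and extends uniquely to an $M_2$-Lipschitz function $g$ on~$(a,b)$.

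Finally, I would upgrade a.e.\ differentiability to differentiability everywhere with derivative $g$: for any $z\in(a,b)$ and any $\eps>0$, pick $z_0\in E$ with $\abs{z-z_0}<\eps/(3M_2)$; then
\be{
	\babs{\phi(u,z)-g(z)} \leq \babs{\phi(u,z)-\phi(u,z_0)} + \babs{\phi(u,z_0)-f'(z_0)} + \babs{g(z_0)-g(z)},
}
where the first and third summands are each at most $M_2\abs{z-z_0}<\eps/3$ by the uniform spatial Lipschitz bound on $\phi(u,\cdot)$ and the Lipschitz bound on $g$, while the middle summand tends to~$0$ as $u\to 0$ because $z_0\in E$. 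Hence $\phi(u,z)\to g(z)$, so $f'(z)$ exists everywhere and equals $g(z)$, which is $M_2$-Lipschitz. The main conceptual point is simply recognizing that the $M_2$ hypothesis is precisely a uniform spatial Lipschitz bound on $\phi(u,\cdot)$; after that observation, everything else is routine. A more self-contained variant would bypass Lebesgue's theorem by showing directly that $\phi(u,z)$ is Cauchy as $u\to 0$, starting from $\phi(2u,z)=\bklr{\phi(u,z)+\phi(u,z+u)}/2$ and iterating in~$u$, but the approach above is cleaner and the invoked fact is a standard one-liner.
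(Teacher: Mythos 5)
Your proof is correct and follows essentially the same route as the paper: establish a.e.\ differentiability from~$M_1<\infty$, show $M_2$-Lipschitz continuity of~$f'$ on the set of differentiability points, extend, and conclude via a three-$\eps$ argument. The only cosmetic differences are that you invoke Lebesgue's differentiation theorem rather than Rademacher's (equivalent in one dimension) and observe that the Lipschitz extension from the dense set is automatic and unique, whereas the paper cites Kirszbraun's theorem, which is more than is needed here.
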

\begin{proof}
Since, by the first condition of~\eq{26}, $f$ is $M_1$-Lipschitz, Rademacher's theorem implies that there is a dense set $E\subset (a,b)$ on which $f$ has a derivative $f'$. On $E$, the second condition of~\eq{26} implies that $f'$ is $M_2$-Lipschitz, and so by Kirszbraun's theorem,~$f'$ can be extended to an $M_2$-Lipschitz function $\tilde f'$ on $(a,b)$. We show that for $x\not\in E$, $\tilde f'(x)$ is in fact the derivative of $f$ at~$x$. Fix $\eps>0$. Let $x'\in E$ such that $\abs{x'-x}<\eps/(3M_2)$, such that $\abs{h^{-1}\Delta_hf(x')-f'(x')}\leq \eps/3$, and such that $\abs{\tilde f'(x')-\tilde f'(x)}< \eps/3$. Then for any $0<h\leq \eps$,
\ba{
 \bbbabs{\frac{\Delta_h f(x)}{h}- \tilde f'(x)}  
	&\leq \bbbabs{\frac{\Delta_h f(x)}{h}-\frac{\Delta_h f(x')}{h}}
	+ \bbbabs{\frac{\Delta f(x')}{h}-\tilde f'(x')} 
	+ \babs{\tilde f'(x')-\tilde f'(x)} \\
	&= \bbbabs{\frac{\Delta_{x-x'}\Delta_h f(x')}{h}}
	+ \bbbabs{\frac{\Delta f(x')}{h}- f'(x')} 
	+ \babs{\tilde f'(x')-\tilde f'(x)} \\
	&\leq \frac{\eps}{3} + \frac{\eps}{3}+ \frac{\eps}{3}=\eps.
}
Hence, $\lim_{h\to0}h^{-1}\Delta_h f(x) = \tilde f'(x)$, as desired.
\end{proof}

\begin{lemma}\label{lem5} Let~$U\subset\IR^n$ be an open convex set, let~$g:U\to\IR$ be a bounded function, and let~$m\geq 0$. If, for each~$1\leq k\leq m+1$, there is a constant~$M_k<\infty$, such that, for each set of indices~$1\leq i_1,\dots,i_k\leq n$,
\ben{\label{31}
	\sup_{x\in U }\sup_{u_1,\dots,u_k}\frac{\abs{\Delta_{u_1\te_{i_1}}\cdots\Delta_{u_k\te_{i_k}} g(x)}}{\abs{u_1\cdots u_k}} \leq M_k,		
}	
then~$g\in \BC^{m,1}(U)$
 and 
\ben{\label{32}
	\abs{g}_{k,1}\leq M_{k+1},\qquad 0\leq k\leq m.
}
\end{lemma}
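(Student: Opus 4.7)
The plan is to prove Lemma~\ref{lem5} by induction on~$m$, using Lemma~\ref{lem4} along coordinate slices to produce partial derivatives and Lemma~\ref{lem3} to convert coordinate-wise Lipschitz bounds into~$L_1$-Lipschitz bounds.

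For the base case~$m=0$, hypothesis~\eq{31} with~$k=1$ asserts that~$g$ is~$M_1$-Lipschitz in each coordinate direction, and Lemma~\ref{lem3} upgrades this to~$\abs{g}_{0,1}\leq M_1$. Since~$g$ is bounded by assumption, $g\in\BC^{0,1}(U)$, so~\eq{32} holds.

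For the inductive step, assume the lemma for~$m-1$, and fix $i\in\{1,\dots,n\}$. For every~$x\in U$, the set~$I_{x,i}:=\{t\in\IR:x+t\te_i\in U\}$ is an open interval by openness and convexity of~$U$, and the slice~$f_{x,i}(t):=g(x+t\te_i)$ inherits the hypotheses~\eq{26} of Lemma~\ref{lem4} with the same constants~$M_1$ and~$M_2$. Hence $f_{x,i}$ is differentiable with $M_2$-Lipschitz derivative, so~$h_i(x):=\partial g/\partial x_i(x)$ exists at every~$x\in U$, and passing~$u\to0$ in~\eq{31} with $k=1$ gives $\|h_i\|_\infty\leq M_1$. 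The key step is to verify that~$h_i$ itself satisfies~\eq{31} for~$1\leq k\leq m$ with shifted constants $M_k':=M_{k+1}$: for indices~$j_1,\dots,j_k$ and valid step sizes, commutativity of finite differences combined with pointwise existence of $\partial g/\partial x_i$ along lines gives
\[
\Delta_{u_1\te_{j_1}}\cdots\Delta_{u_k\te_{j_k}}h_i(x) = \lim_{u_0\to0}\frac{\Delta_{u_0\te_i}\Delta_{u_1\te_{j_1}}\cdots\Delta_{u_k\te_{j_k}}g(x)}{u_0},
\]
and hypothesis~\eq{31} for~$g$ at level~$k+1$ bounds the numerator by $M_{k+1}\abs{u_0 u_1\cdots u_k}$, so the left side is at most $M_k'\abs{u_1\cdots u_k}$ as required.

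The inductive hypothesis applied to~$h_i$ then yields $h_i\in\BC^{m-1,1}(U)$ with $\abs{h_i}_{k,1}\leq M_{k+2}$ for $0\leq k\leq m-1$. Since~$i$ was arbitrary and~$g$ is bounded, $g\in\BC^{m,1}(U)$, with $\abs{g}_{k,1}=\sup_i\abs{h_i}_{k-1,1}\leq M_{k+1}$ for $1\leq k\leq m$ and $\abs{g}_{0,1}\leq M_1$ from the base-case argument. The main delicate point is the displayed commutation-plus-limit identity for~$h_i$: it is handled by expanding $\Delta_{u_1\te_{j_1}}\cdots\Delta_{u_k\te_{j_k}}$ as a signed sum of~$2^k$ evaluations of~$g$, pairing terms with and without the~$u_0\te_i$ shift into difference quotients in direction~$\te_i$ at~$2^k$ different base points, and passing each to its limit via Lemma~\ref{lem4}; the combinatorial bookkeeping is routine but warrants care.
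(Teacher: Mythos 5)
Your proof is correct and it takes a genuinely different inductive route from the paper's. The paper fixes a chain of indices $i_1,\dots,i_m$ and inducts on the order $k$ \emph{within} that chain: at each stage it applies Lemma~\ref{lem4} to a one-dimensional slice of the already-constructed $(k-1)$-th partial, verifying the hypotheses~\eq{26} by pushing the difference operators through iterated limits. You instead induct on the smoothness level $m$ itself, peeling off one first-order partial $h_i=\partial g/\partial x_i$, showing via the limit/finite-difference interchange that $h_i$ inherits the hypotheses~\eq{31} with shifted constants $M'_k = M_{k+1}$, and then invoking the inductive hypothesis for $m-1$ applied to $h_i$. The technical core is the same in both proofs --- Lemma~\ref{lem4} on coordinate slices, Lemma~\ref{lem3} to upgrade coordinate-wise Lipschitz bounds, and the interchange of a pointwise limit with a finite signed sum of $2^k$ translates --- but your version localizes the interchange into one clean verification that $h_i$ satisfies the shifted hypothesis, which gives a tidier recursive structure and makes the role of the shifted constants $M_{k+1}$ transparent. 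The only points deserving an explicit remark, which you touch on implicitly, are that the symmetry of mixed partials (needed to write $|g|_{k,1}=\sup_i|h_i|_{k-1,1}$ for an arbitrary index sequence) follows from the continuity of the derivatives you construct, and that the step sizes must stay inside $U$, which openness and convexity of $U$ guarantee.
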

\begin{proof} If~$m=0$, the result is immediate since \eq{31} is just the coordinate-wise~$M_1$-Lipschitz condition, which implies that~$g$ is Lipschitz, and \eq{32} follows from Lemma~\ref{lem3}. Now, assume~$m\geq 1$. Fix a set of~$m$ indices~$1\leq i_1,\dots,i_m\leq n$. We proceed by induction and start with~$k=1$. Let~$\tx= (x_1,\dots,x_n)\in U~$, let~$a<x_i<b$ such that 
\be{
	t(x_1,\dots,x_{i_1-1},a,x_{i_1-1},\dots,x_n) 
		+ (1-t)(x_1,\dots,x_{i_1-1},b,x_{i_1-1},\dots,x_n) \in U, \qquad 0\leq t\leq1,
}
and, with~$\tx_z = (x_1,\dots,x_{i_1-1},z,x_{i_1-1},\dots,x_n)$, let~$f(z) = g(\tx_z)$. By the assumptions on~$g$, we have
\bg{
	\sup_{z}\sup_{u}\frac{\abs{\Delta_{u}f(z)}}{\abs{u}} 
	= \sup_{z}\sup_{u_1}\frac{\abs{\Delta_{u_1\e_{i_1}}g(\tx_z)}}{\abs{u_1}}\leq M_1<\infty,\\
	\sup_{z}\sup_{u,v}\frac{\abs{\Delta_{u}\Delta_{v}f(z)}}{\abs{uv}}
	=\sup_{z}\sup_{u_1,u_2}\frac{\abs{\Delta_{u_1\e_{i_1}}\Delta_{u_2\e_{i_1}}g(\tx_z)}}{\abs{u_1u_2}}\leq M_2<\infty
} 
(note that in the second expression, the second difference is also in the direction~$\e_{i_1}$)
so that the conditions \eq{26} are satisfied. Applying Lemma~\ref{lem4}, we conclude that~$f'(z)=\frac{\partial}{\partial x_{i_1}} g(\tx_z)$ exists and that it is~$M_2$-Lipschitz continuous in direction~$i_1$, but the same argument there together with \eq{31} yields~$M_2$-Lipschitz continuity in any other direction, so that by Lemma~\ref{lem3},~$\frac{\partial}{\partial x_{i_1}} g(\tx)$ is~$M_2$-Lipschitz. Since~$\tx$ was arbitrary,~$\frac{\partial}{\partial x_{i_1}} g(\tx)$ exists in all of~$U$ and is~$M_2$-Lipschitz, which concludes the base case. 

Assume now that~$1<k<m$ and that~$\frac{\partial^{k-1}}{\partial x_{i_1}\cdots\partial x_{i_{k-1}}} g(\tx)$ exists in all of~$U$. Let~$x\in U$, let~$a$,~$b$ and~$\tx_z$ be as before, and let~$f(z) = \frac{\partial^{k-1}}{\partial x_{i_1}\cdots\partial x_{i_{k-1}}} g(\tx_z)$.
From the assumptions on~$g$ and since~$\frac{\partial^{k-1}}{\partial x_{i_1}\cdots\partial x_{i_{k-1}}} g(\tx)$ exists, we have
\bg{
	\sup_{z}\sup_{u}\frac{\abs{\Delta_{u}f(z)}}{\abs{u}} 
	= \sup_{z}\sup_{u_{k}}\bbbabs{\lim_{u_1\to0}\cdots\lim_{u_{k-1}\to0}\frac{\Delta_{u_1\te_{i_1}}\cdots\Delta_{u_{k}\te_{i_{k}}}g(\tx_z)}{u_1\dots u_{k}}} \leq M_{k} < \infty \\
	\sup_{z}\sup_{u,v}\frac{\abs{\Delta_{u}\Delta_v f(z)}}{\abs{uv}} 
	= \sup_{z}\sup_{u_{k},u_{k+1}}\bbbabs{\lim_{u_1\to0}\cdots\lim_{u_{k-1}\to0}\frac{\Delta_{u_1\te_{i_1}}\cdots\Delta_{u_{k}\te_{i_{k}}}\Delta_{u_{k+1}\te_{i_{k}}}g(\tx_z)}{u_1\dots u_{k}u_{k+1}}} \leq M_{k+1} < \infty 
}
so that the conditions \eq{26} are satisfied. Applying Lemma~\ref{lem4}, we conclude that~$f'(z)=\frac{\partial^{k}}{\partial x_{i_1}\cdots\partial x_{i_{k}}} g(\tx_z)$ exists
and that it is~$M_{k+1}$-Lipschitz continuous in direction~$i_k$, but the same argument there together with \eq{31} yields~$M_{k+1}$-Lipschitz continuity in any other direction, so that by Lemma~\ref{lem3},~$\frac{\partial^{k}}{\partial x_{i_1}\cdots\partial x_{i_{k}}} g(\tx_z)$ is~$M_{k+1}$-Lipschitz. Since~$\tx$ was arbitrary,~$\frac{\partial^{k}}{\partial x_{i_1}\cdots\partial x_{i_{k}}} g(\tx_z)$ exists in all of~$U$ and is~$M_{k+1}$-Lipschitz, which concludes the induction step. 
\end{proof}

The following is a specialisation of \cite[Lemma~2.1]{Bentkus2003} to the convex set metric; we need some notation first. Let~$A\subset \IR^K$ be convex, let~$d(\tx,A) = \inf_{\ty\in A}|\tx-\ty|$, and define the sets
\ben{\label{33}
	A^\eps = \{\tx\in \IR^K\,:\,d(\tx,A)\leq \eps\},\qquad A^{-\eps} = \{\tx\in A\,:\, B(\tx;\eps)\subset A\},
}
where~$B(\tx;\eps)$ is the closed ball of radius~$\eps$ around~$\tx$.
\def\cA{\mathcal{C}_{K-1}}

\begin{lemma}[{\cite[Lemma~2.1]{Bentkus2003}}] \label{lem6} Let~$\cC_K$ be the family of convex sets of\/~$\IR^K$, and for fixed~$\eps>0$, let~$\{\phi_{\eps,A}; \, A\in\cC_K\}$ be a family of functions satisfying
\ben{\label{34}
	0\leq \phi_{\eps,A} \leq 1,
	\qquad 
	\text{$\phi_{\eps,A}(\tx) = 1$ for~$\,\tx\in A$},
	\qquad
	\text{$\phi_{\eps,A}(\tx) = 0$ for~$\,\tx\not\in A^{\eps}$.}
}
Then, for any two random vectors~$\tX$ and~$\tY$, 
\bes{
	&\sup_{A\in\cC_K}\babs{\IP[\tX\in A]-\IP[\tY\in A]} \\
	&\quad \leq \sup_{A\in\cC_K}\babs{\IE\phi_{\eps,A}(\tX)-\IE\phi_{\eps,A}(\tY)}
	+  \sup_{A\in\cC_K}\max\bklg{\IP[\tY\in A\setminus A^{-\eps}],\IP[\tY\in A^\eps\setminus A]}
}
\end{lemma}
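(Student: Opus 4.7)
The plan is a classical smoothing sandwich argument built on the family $\{\phi_{\eps,A}\}$. First I would observe that $A^{-\eps}$ is convex whenever $A$ is, so $\phi_{\eps,A^{-\eps}}$ is itself a legitimate member of the family; convexity follows because if $\tx,\ty\in A^{-\eps}$ and $t\in[0,1]$, then $B(t\tx+(1-t)\ty;\eps)=tB(\tx;\eps)+(1-t)B(\ty;\eps)\subseteq A$ by the convexity of $A$. The key geometric input is the inclusion $(A^{-\eps})^\eps \subseteq A$: any $\tx$ within distance $\eps$ of some $\ty\in A^{-\eps}$ lies in $B(\ty;\eps)\subseteq A$. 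Combining these with~\eq{34} yields the double sandwich
\be{
\II_{A^{-\eps}}(\tx) \leq \phi_{\eps,A^{-\eps}}(\tx) \leq \II_A(\tx) \leq \phi_{\eps,A}(\tx) \leq \II_{A^\eps}(\tx).
}

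Next I would bound $\IP[\tX\in A]-\IP[\tY\in A]$ from above and below separately. For the upper side, using the right half of the sandwich,
\be{
\IP[\tX\in A]-\IP[\tY\in A] \leq \IE\phi_{\eps,A}(\tX)-\IE\phi_{\eps,A}(\tY) + \bklr{\IE\phi_{\eps,A}(\tY)-\IP[\tY\in A]},
}
and the last parenthesis is at most $\IP[\tY\in A^\eps\setminus A]$. The symmetric lower bound comes by applying the same idea with $A^{-\eps}$ replacing $A$ and using the left half of the sandwich, giving
\be{
\IP[\tY\in A]-\IP[\tX\in A] \leq \babs{\IE\phi_{\eps,A^{-\eps}}(\tY)-\IE\phi_{\eps,A^{-\eps}}(\tX)} + \IP[\tY\in A\setminus A^{-\eps}].
}

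Combining the two one-sided bounds for fixed $A$ gives $\babs{\IP[\tX\in A]-\IP[\tY\in A]}$ bounded by the maximum of the two right-hand sides, which in turn is at most $\sup_{B\in\cC_K}\babs{\IE\phi_{\eps,B}(\tX)-\IE\phi_{\eps,B}(\tY)}$ (since both $A$ and $A^{-\eps}$ lie in $\cC_K$) plus $\max\bklg{\IP[\tY\in A^\eps\setminus A],\IP[\tY\in A\setminus A^{-\eps}]}$. Taking the supremum over $A\in\cC_K$ yields the claim. The only genuinely geometric steps are the convexity of $A^{-\eps}$ and the inclusion $(A^{-\eps})^\eps\subseteq A$; neither is difficult, and since the rest is bookkeeping via the triangle inequality I do not anticipate any real obstacle in carrying the argument out in full.
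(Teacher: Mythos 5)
The paper does not prove this lemma; it simply cites it from \cite[Lemma~2.1]{Bentkus2003}, so there is no paper proof to compare against. Your smoothing--sandwich argument is the standard one and is essentially correct: the convexity of $A^{-\eps}$, the inclusion $(A^{-\eps})^{\eps}\subseteq A$, the chain
\be{
\II_{A^{-\eps}} \leq \phi_{\eps,A^{-\eps}} \leq \II_A \leq \phi_{\eps,A} \leq \II_{A^\eps},
}
and the two one-sided bounds combine exactly as you say to give the claim. One small point worth tightening: in verifying $(A^{-\eps})^{\eps}\subseteq A$ you say ``any $\tx$ within distance $\eps$ of some $\ty\in A^{-\eps}$,'' but the definition $d(\tx,A^{-\eps})\leq\eps$ is an infimum and does not by itself produce such a $\ty$ unless that infimum is attained. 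This is automatic once one notes that $A^{-\eps}$ is closed whenever $A$ is closed (a short limit argument), and that one may restrict the supremum in the statement to closed convex sets without loss of generality --- or, in the paper's actual application, that $\tY=\tZ$ has a density so boundary effects are negligible. With that caveat the argument is complete.
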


\def\cS{\mathcal{S}}

\begin{lemma}[Smoothing operator] \label{lem7}Let~$f:\IR^n\to \IR$ be a bounded and Lebesgue measurable function. For~$\eps>0$, define the smoothing operator~$\cS_\eps$ as
\be{
	(\cS_\eps f)(\tx) = \frac{1}{(2\eps)^n}\int\limits_{x_1-\eps}^{x_1+\eps}\cdots\int\limits_{x_n-\eps}^{x_n+\eps}f(z)\,dz_n\cdots dz_1.
}
Then, for any $m\geq 1$, we have that $\cS_\eps^m f\in\BC^{m-1,1}(\IR^n)$, and for fixed~$\tx\in\IR^n$,~$(\cS_\eps^ m f)(\tx)$ does not depend on~$f(\ty)$,~$\ty\in\IR^n\setminus B(\tx;mn^{1/2}\eps)$. Moreover, we have the bounds
\ben{\label{35}
	\norm{\cS^m_\eps f}_{\infty}\leq\norm{f}_\infty,
	\qquad
	\abs{\cS^m_\eps f}_{k-1,1}\leq \frac{\norm{f}_\infty}{\eps^k}, \quad 1\leq k\leq m.
}
\end{lemma}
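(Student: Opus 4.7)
The three claims can be established in sequence. The sup-norm bound $\norm{\cS_\eps^m f}_\infty \leq \norm{f}_\infty$ is immediate by induction, since each application of $\cS_\eps$ averages $f$ over a box and hence cannot increase the sup-norm. The locality claim---that $(\cS_\eps^m f)(\tx)$ depends only on $f$ restricted to $B(\tx; m n^{1/2} \eps)$---follows because one application of $\cS_\eps$ only consults $f$ on the cube $[\tx-\eps,\tx+\eps]^n$, which is contained in $B(\tx; n^{1/2}\eps)$; iterating $m$ times and applying the triangle inequality gives the claimed radius.

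For the regularity and Lipschitz bounds I would argue by induction on $m$ with the hypothesis that $\cS_\eps^m f\in\BC^{m-1,1}(\IR^n)$ together with $\abs{\cS_\eps^m f}_{k-1,1}\leq \norm{f}_\infty/\eps^k$ for $1\leq k\leq m$. For the base case $m=1$, a direct computation gives, for any coordinate $i$ and any $h>0$,
\be{
\babs{(\cS_\eps f)(\tx+h\te_i)-(\cS_\eps f)(\tx)}\leq \frac{h\,\norm{f}_\infty}{\eps},
}
by splitting the integral defining $\cS_\eps f$ into the common overlap plus two strips of width $\min\{h,2\eps\}$ along the $i$-th coordinate (when $h>2\eps$ the bound is trivial). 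This yields coordinate-wise Lipschitz continuity with constant $\norm{f}_\infty/\eps$, and Lemma~\ref{lem3} promotes this to $L_1$-Lipschitzness with the same constant.

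For the inductive step, write $v:=\cS_\eps^m f=\cS_\eps g$ with $g:=\cS_\eps^{m-1}f\in\BC^{m-2,1}$ by IH. The key tool is the commutation identity $\partial_i(\cS_\eps u)=\cS_\eps(\partial_i u)$, valid for any Lipschitz $u:\IR^n\to\IR$; this follows from Fubini combined with the absolute continuity of Lipschitz functions along axis-parallel lines, which recasts both sides as the same difference of $(n-1)$-dimensional boundary averages of $u$. Iterating along any multi-index $\alpha$ with $\abs{\alpha}\leq m-1$ yields $\partial^\alpha v=\cS_\eps(\partial^\alpha g)$, and continuity of these derivatives is immediate since they are smoothings of bounded functions. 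Applying the base case to each $\partial^\alpha g$---which IH bounds by $\norm{f}_\infty/\eps^{\abs{\alpha}}$ after using~\eq{2} to convert $(\abs{\alpha}-1)$-st Lipschitz constants into sup norms of $\abs{\alpha}$-th derivatives---shows that $\partial^\alpha v$ is bounded by $\norm{\partial^\alpha g}_\infty$ and is $(\norm{\partial^\alpha g}_\infty/\eps)$-Lipschitz. Taking $\abs{\alpha}=m-1$ supplies the top-order Lipschitz derivative of $v$, establishing $v\in\BC^{m,1}$ and $\abs{v}_{m,1}\leq \norm{f}_\infty/\eps^{m+1}$; the lower-order bounds follow by invoking~\eq{2} again.

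The main technical obstacle is justifying the commutation $\partial_i\cS_\eps u=\cS_\eps\partial_i u$ in the merely Lipschitz regime, where $\partial_i u$ exists only almost everywhere. This requires Rademacher's theorem (or equivalently, AC along lines) and a careful application of Fubini, but once this ingredient is in place the remaining bookkeeping is a routine iteration of the base-case estimate.
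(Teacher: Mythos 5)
Your treatment of the sup-norm bound and the locality claim matches the paper's. For the regularity and Lipschitz bounds you take a genuinely different route. The paper exploits the fact that the finite-difference operators $\Delta_{u\te_i}$ commute with $\cS_\eps$ with no hypotheses at all on the input, rewrites
\be{
\Delta_{u_1\te_{i_1}}\cdots\Delta_{u_k\te_{i_k}}\cS_\eps^m
= (\Delta_{u_1\te_{i_1}}\cS_\eps)\cdots(\Delta_{u_k\te_{i_k}}\cS_\eps)\cS_\eps^{m-k},
}
applies the one-step estimate $\norm{\Delta_{u\te_i}\cS_\eps g}_\infty\leq \abs{u}\norm{g}_\infty/\eps$ to each of the $k$ bracketed factors and the contraction $\norm{\cS_\eps g}_\infty\leq\norm{g}_\infty$ to the remaining $m-k$ smoothings, and then invokes Lemma~\ref{lem5} to pass from finite-difference ratio bounds to actual differentiability and the stated Lipschitz constants. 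You instead commute derivatives, $\partial_i(\cS_\eps u)=\cS_\eps(\partial_i u)$, which — as you yourself flag — forces you to justify this in the merely Lipschitz regime via Rademacher, Fubini, and absolute continuity along lines. The paper's route sidesteps exactly this obstacle: all the Rademacher-type work is already packaged inside Lemmas~\ref{lem4} and~\ref{lem5}, so the proof of Lemma~\ref{lem7} itself stays entirely elementary. Both routes are sound, but the paper's is cleaner and reuses infrastructure it has already built.

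There is also a concrete off-by-one error in your final sentence. The induction gives $\partial^\alpha v=\cS_\eps(\partial^\alpha g)$ for $\abs{\alpha}\leq m-1$ with $g=\cS_\eps^{m-1}f\in\BC^{m-2,1}$, so the case $\abs{\alpha}=m-1$ yields a Lipschitz $(m-1)$-th partial of $v$ with Lipschitz constant at most $\norm{\partial^\alpha g}_\infty/\eps \leq (\norm{f}_\infty/\eps^{m-1})/\eps=\norm{f}_\infty/\eps^{m}$. This is precisely $v=\cS_\eps^m f\in\BC^{m-1,1}$ with $\abs{v}_{m-1,1}\leq\norm{f}_\infty/\eps^{m}$, which is the $k=m$ case of \eq{35}. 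You instead wrote $v\in\BC^{m,1}$ and $\abs{v}_{m,1}\leq\norm{f}_\infty/\eps^{m+1}$; that is one order too strong and in fact unreachable by your argument, since $\partial^\alpha g$ does not exist for $\abs{\alpha}=m$ when $g\in\BC^{m-2,1}$. Correct those two indices and the proof closes.
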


\begin{proof} The claim that $f(\tx)$ does not depend on $f(\ty)$,~$\ty\in\IR^n\setminus B(\tx;mn^{1/2}\eps)$, is a straightforward consequence of the definition, as is the bound 
\ben{\label{36}
	\norm{\cS_\eps f}_\infty \leq \norm{f}_\infty.
}
Now, it is easy to see that for $u>0$ and $1\leq i\leq n$,
\bes{
	\abs{\Delta_{u\te_{i}}\cS_\eps f (\tx)}
	\leq \begin{cases}
		\displaystyle2\norm{f}_\infty& \text{if $u>2\eps$,}\\[2ex]
		\displaystyle\frac{u\norm{f}_\infty}{\eps}& \text{if $u\leq 2\eps$,}\\
	\end{cases}
}
so that $\abs{\Delta_{u\te_{i}}\cS_\eps f (\tx)}\leq u\norm{f}_{\infty}/\eps$ for all $x$ and all $u$, which implies that
\ben{\label{37}
\bbbnorm{\frac{\Delta_{u\te_{i}}\cS_\eps f}{u}}_\infty
	\leq \frac{\norm{f}_\infty}{\eps}.
}
Fix $1\leq k\leq m$, $u_1,\dots,u_k>0$ and $1\leq i_1,\dots,i_k\leq n$. Noting that~$\Delta_{u\te_i}\cS_\eps g = \cS_\eps \Delta_{u\te_i}g$, we can write
\be{
	\Delta_{u_1\te_{i_1}}\cdots\Delta_{u_k\te_{i_k}}\cS_\eps^m = (\Delta_{u_1\te_{i_1}}\cS_\eps)\cdots(\Delta_{u_k\te_{i_k}}\cS_\eps) \cS_\eps^{l-m}.
}
Applying \eq{37} repeatedly~$k$ times and if $k<m$ applying in addition \eq{36}, we obtain \eq{31} with $M_k = \norm{f}/\eps^k$, so that the claim follows from Lemma~\ref{lem5}.
\end{proof}

\begin{lemma}\label{lem8} Let~$\eps>0$, and let~$A\subset \IR^n$ be convex. There exists a function $\phi=\phi_{\eps,A}\in\BC^{2,1}(\IR^n)$ satisfying~\eq{34} with 
\ben{\label{38}
	\abs{\phi}_1\leq \frac{9n^{1/2}}{\eps},
	\qquad
	\abs{\phi}_2\leq \frac{81n}{\eps^2},
	\qquad	
	\abs{\phi}_{2,1}\leq \frac{729n^{3/2}}{\eps^3}.
}
\end{lemma}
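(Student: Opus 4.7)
The plan is to construct $\phi$ by applying Lemma~\ref{lem7} three times to the indicator of an appropriate intermediate fattening of~$A$. Concretely, set $\delta=\eps/(9n^{1/2})$ and define
\be{
	\phi=\phi_{\eps,A} := \cS_\delta^3 \mathbbm{1}_{A^{\eps/3}}.
}
Because $d(\cdot,A)$ is continuous (when $A$ is nonempty; the empty case is trivial), the set $A^{\eps/3}=\{\ty:d(\ty,A)\leq \eps/3\}$ is closed and hence measurable, so the smoothing operator is defined. Lemma~\ref{lem7} with $m=3$ immediately gives $\phi\in\BC^{2,1}(\IR^n)$, and since $0\leq \mathbbm{1}_{A^{\eps/3}}\leq 1$, the smoothing preserves the bounds $0\leq\phi\leq1$.

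To verify the two pointwise conditions in~\eq{34}, I use the locality statement of Lemma~\ref{lem7}: $(\cS_\delta^3 f)(\tx)$ depends only on $f(\ty)$ for $\ty\in B(\tx;3n^{1/2}\delta)=B(\tx;\eps/3)$, by the choice of $\delta$. If $\tx\in A$, then for every $\ty\in B(\tx;\eps/3)$ one has $d(\ty,A)\leq\norm{\ty-\tx}\leq \eps/3$, so $\mathbbm{1}_{A^{\eps/3}}(\ty)=1$ on the entire ball, which forces $\phi(\tx)=1$. Conversely, if $\tx\notin A^\eps$, then $d(\tx,A)>\eps$, and the triangle inequality yields $d(\ty,A)\geq d(\tx,A)-\norm{\ty-\tx}>\eps-\eps/3=2\eps/3>\eps/3$ for every $\ty\in B(\tx;\eps/3)$, so $\mathbbm{1}_{A^{\eps/3}}$ vanishes throughout the ball and hence $\phi(\tx)=0$.

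Finally, the derivative bounds in \eq{38} follow by plugging $\norm{\mathbbm{1}_{A^{\eps/3}}}_\infty=1$ and $\delta=\eps/(9n^{1/2})$ into~\eq{35}: since $\phi\in\BC^{2,1}(\IR^n)$ with $\IR^n$ convex, the identity \eq{2} gives $\abs{\phi}_1=\abs{\phi}_{0,1}\leq \delta^{-1}=9n^{1/2}/\eps$ and $\abs{\phi}_2=\abs{\phi}_{1,1}\leq \delta^{-2}=81n/\eps^2$, while the top-order Lipschitz bound reads $\abs{\phi}_{2,1}\leq \delta^{-3}=729n^{3/2}/\eps^3$. There is no real obstacle here; the only subtlety is calibrating $\delta$ so that the inflation radius $3n^{1/2}\delta$ produced by three applications of $\cS_\delta$ is small enough (namely $\eps/3$) to place the support of $\phi$ inside $A^\eps$ while still keeping $\phi\equiv 1$ on $A$, and simultaneously yields the claimed numerical constants $9,81,729$.
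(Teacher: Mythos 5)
Your proof is correct and follows exactly the paper's construction: the paper takes $\delta=\eps/(9\sqrt{n})$ and sets $\phi=\cS_\delta^3 I_{A^{\eps/3}}$, stating only that ``the claim then follows from Lemma~\ref{lem7}.'' You have simply filled in the details that the paper leaves implicit --- the locality argument for the two pointwise conditions in \eq{34} and the substitution into \eq{35} --- all of which check out.
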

\begin{proof} Let~$\delta = \frac{\eps}{9\sqrt{n}}$. Define
\be{
	\phi(\tx) = \cS_\delta^3 I_{A^{\eps/3}}(\tx);
}
the claim then follows from Lemma~\ref{lem7}.
\end{proof}

\begin{lemma}\label{lem9} Let~$\cC_{K-1}$ be the class of convex sets on~$\IR^{K-1}$. Let~$\tZ\sim \Dir(a_1,\dots,a_K)$ and assume 
\ben{\label{39}
	\abs{\IE h(\tW) - \IE h(\tZ)} \leq c_0\abs{h}_0 + c_1\abs{h}_1 + c_2\abs{h}_2 + c_3\abs{h}_{2,1},
}
for any $h\in\BC^{2,1}(\-\Delta_K)$. 
Then there is a constant~$C>0$ depending only on~$a_1,\dots,a_K$ such that
\ben{\label{40}
	\sup_{A\in \cC_{K-1}}\abs{\IP[\tW\in A]-\IP[\tZ\in A]} \leq c_0+C(c_1+c_2+c_3)^{\theta/(3+\theta)},
}
where 
\ben{
	\theta = \frac{\theta_\wedge}{\theta_\wedge+\theta_\circ},\qquad \theta_\wedge = 1\wedge\min\{a_1,\dots,a_K\},\qquad \theta_\circ=\sum_{i=1}^K\bklr{1-1\wedge a_i}.
}
\end{lemma}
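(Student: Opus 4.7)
The plan is to combine Lemma~\ref{lem6} (Bentkus smoothing) with Lemma~\ref{lem8} (smoothed indicators) in the standard way, together with a careful boundary-layer estimate for the Dirichlet to handle the singularity of its density along the faces of~$\-\Delta_K$. Fix~$\eps\in(0,1]$. For each~$A\in\cC_{K-1}$, Lemma~\ref{lem8} yields a function~$\phi_{\eps,A}\in\BC^{2,1}(\IR^{K-1})$ satisfying~\eq{34} with~$\abs{\phi_{\eps,A}}_0\leq 1$,~$\abs{\phi_{\eps,A}}_1\leq C\eps^{-1}$,~$\abs{\phi_{\eps,A}}_2\leq C\eps^{-2}$, and~$\abs{\phi_{\eps,A}}_{2,1}\leq C\eps^{-3}$, where~$C=C(K)$; restricting to~$\-\Delta_K$ and applying the hypothesis~\eq{39} gives
\be{
	\babs{\IE\phi_{\eps,A}(\tW)-\IE\phi_{\eps,A}(\tZ)}\leq c_0+C(K)(c_1+c_2+c_3)\eps^{-3}.
}
Lemma~\ref{lem6} then reduces the problem to estimating~$\sup_{A\in\cC_{K-1}}\IP[\tZ\in A^\eps\setminus A^{-\eps}]$, which will be optimised against the smoothing contribution at the end.

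The main obstacle, and the source of the exponent~$\theta$ in~\eq{40}, is this neighbourhood estimate for the Dirichlet, because the density~\eq{1} blows up like~$x_i^{a_i-1}$ along the face~$\{x_i=0\}$ whenever~$a_i<1$, so the naive~$\bigo(\eps)$ perimeter bound fails uniformly in~$\ta$. I would introduce a second parameter~$\delta>0$ and split according to whether~$\tZ$ lies in~$D_\delta:=\{\tx\in\Delta_K:\min_{1\leq i\leq K}x_i\geq\delta\}$, with the convention~$x_K=1-\sum_{i<K}x_i$. On~$D_\delta$ the density is uniformly bounded by a constant times~$\delta^{-\theta_\circ}$, since every factor~$x_i^{a_i-1}$ with~$a_i<1$ contributes at most~$\delta^{a_i-1}$ while the remaining factors are bounded by~$1$. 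Combined with the standard volume bound~$\Vol\bklr{(A^\eps\setminus A^{-\eps})\cap\-\Delta_K}\leq C_K\eps$ (which follows from Steiner's formula applied to the bounded convex body~$A\cap\-\Delta_K$, whose surface area is controlled by~$\mathrm{Surface}(\-\Delta_K)$ by the monotonicity of surface area under convex inclusion), this yields an interior contribution of order~$\delta^{-\theta_\circ}\eps$. For the complementary event, the union bound together with the marginal law~$Z_i\sim\Beta(a_i,s-a_i)$ and the elementary estimate~$\IP[Z_i<\delta]\leq C(a_i)\delta^{1\wedge a_i}$ (valid for~$\delta\leq 1/2$) yields~$\IP[\min_i Z_i<\delta]\leq C(\ta)\delta^{\theta_\wedge}$. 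Optimising in~$\delta$ via~$\delta^{\theta_\wedge+\theta_\circ}=\eps$ then delivers
\be{
	\sup_{A\in\cC_{K-1}}\IP[\tZ\in A^\eps\setminus A^{-\eps}]\leq C(\ta)\eps^\theta.
}

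Assembling the pieces gives, for every~$\eps\in(0,1]$,
\be{
	\sup_{A\in\cC_{K-1}}\babs{\IP[\tW\in A]-\IP[\tZ\in A]}\leq c_0+C(K)(c_1+c_2+c_3)\eps^{-3}+C(\ta)\eps^\theta.
}
A single balancing~$\eps=(c_1+c_2+c_3)^{1/(3+\theta)}$ (when this lies in~$(0,1]$; otherwise~\eq{40} is trivial upon enlarging~$C$ to absorb the case in which the right-hand side already exceeds~$1$) yields exactly~\eq{40}. The only substantive calculations are the volume estimate on convex annuli in the simplex and the bookkeeping that combines the interior density bound with the boundary probability into the sharp exponent~$\theta$; both are standard, but the latter must be carried out carefully because the dependence on~$\ta$ through~$\theta_\wedge$ and~$\theta_\circ$ cannot be further consolidated.
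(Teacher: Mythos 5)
Your proposal is correct and follows essentially the same strategy as the paper: Bentkus smoothing via Lemmas~\ref{lem6} and~\ref{lem8}, a two-parameter split separating a bulk region where the Dirichlet density is bounded by~$\delta^{-\theta_\circ}$ from a thin boundary layer of probability~$\delta^{\theta_\wedge}$ controlled by the Beta marginals, and a final optimisation first in~$\delta$ then in~$\eps$.

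One small but genuine difference is worth noting. The paper shrinks the \emph{set}~$A$ to~$A_\circ = A\cap\-\Delta^{-\delta}_{K-1}$ and bounds~$\IP[\tZ\in A^\eps_\circ\setminus A_\circ]$; since~$A^\eps_\circ$ can spill into~$\-\Delta_{K-1}\setminus\-\Delta^{-\delta}_{K-1}$, this forces the density bound to be taken on~$\-\Delta^{-(\delta-\eps)}_{K-1}$, producing the~$(\delta-\eps)^{-\theta_\circ}$ factor and the ensuing case distinction for~$\theta_\circ=0$. You instead partition the \emph{probability} by whether~$\tZ\in D_\delta$, bounding~$\IP[\tZ\in(A^\eps\setminus A^{-\eps})\cap D_\delta]$ by the density bound on~$D_\delta$ times the volume of the full annulus, and~$\IP[\tZ\notin D_\delta]$ by the marginal tail estimates. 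This sidesteps the~$\delta-\eps$ issue entirely and is a modest cleanup; it also lets the bulk volume estimate be deduced from Steiner's formula plus monotonicity of surface area under convex inclusion rather than invoking Hausdorff-continuity of the Quermassintegrals. The exponents, the balancing choices, and the final bound are identical.
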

\begin{proof} Since both $\tW$ and $\tZ$ take values in $\-\Delta_K$, we may assume without loss of generality that~$A\subset  \-\Delta_K$. 
Fix~$\eps>0$; from Lemma~\ref{lem6} we have
\bes{
	& \sup_{A\in \cC_{K-1}}\abs{\IP[\tW\in A]-\IP[\tZ\in A]} \\
	& \qquad\leq \sup_{A\in \cC_{K-1}}\abs{\IE \phi_{\eps,A}(\tW) - \IE \phi_{\eps,A}(\tZ)} + \sup_{A\in \cC_{K-1}} \IP[\tZ\in A^\eps\setminus A]\vee \IP[\tZ\in A\setminus A^{-\eps}] \\
	& =: R_1 + R_2,
}
where the $\phi_{\eps,A}$ are chosen as in Lemma~\ref{lem8}. Using \eq{39} and \eq{38} 
\be{
	R_1\leq c_0 + \frac{9(K-1)^{1/2}c_1}{\eps} + \frac{81(K-1)c_2}{\eps^2} + \frac{729(K-1)^{3/2}c_3}{\eps^3}.
}
In order to bound~$R_2$ we proceed as follows. Let~$\delta\geq\eps$ (to be chosen later), and consider~$\-\Delta^{-\delta}_{K-1}$, the~$\delta$-shrinkage of~$\-\Delta_{K-1}$ as defined in~\eq{33}. 
For given convex~$A\subset \-\Delta_{K-1}$, let~$A_\circ = A \cap \-\Delta^{-\delta}_{K-1}$ (which is again convex) and note that
\ba{
	\IP[\tZ\in A^\eps\setminus A] & \leq \IP[\tZ\in \-\Delta_{K-1}\setminus\-\Delta_{K-1}^{-\delta}] + \IP[\tZ\in A^{\eps}_\circ\setminus A_\circ], \\
	\IP[\tZ\in A\setminus A^{-\eps}] & \leq \IP[\tZ\in \-\Delta_{K-1}\setminus\-\Delta_{K-1}^{-\delta}] + \IP[\tZ\in A_\circ\setminus A^{-\eps}_\circ],
}
so that
\be{
	\IP[\tZ\in A^\eps\setminus A]\vee \IP[\tZ\in A\setminus A^{-\eps}]\leq
	\IP[\tZ\in \-\Delta_{K-1}\setminus\-\Delta_{K-1}^{-\delta}] + \IP[\tZ\in A_\circ^\eps\setminus A_\circ]\vee \IP[\tZ\in A_\circ\setminus A_\circ^{-\eps}].
}
Using a union bound and the fact that the marginals of~$\tZ$ have beta distributions,
\bes{
	\IP\bkle{\tZ\in \-\Delta_{K-1}\setminus\-\Delta^{-\delta}_{K-1}} 
		&\leq \sum_{i=1}^K\bklr{\IP[Z_i\leq \delta] + \IP[Z_i\geq 1-\delta]} \\
		& \leq \sum_{i=1}^K\frac{\Gamma(s)}{\Gamma(a_i)\Gamma(s-a_i)}\bbklr{\frac{\delta^{a_i}}{a_i}+\frac{\delta^{s-a_i}}{s-a_i}}
		\leq C\delta^{\theta_\wedge}.
} 
Now, the density~$\psi_\ta$ of the Dirichlet distribution (see~\eq{1}), restricted to~$\-\Delta^{-\delta}_{K-1}$, is bounded by
\ben{\label{41}
	\bbnorm{\psi_\ta\bmid_{\-\Delta^{-\delta}_{K-1}}} \leq \frac{\Gamma(s)}{\prod_{i=1}^K \Gamma(a_i)} \delta^{-\theta_\circ}.
}
From Steiner's formula for convex bodies, which describes the volume of~$\eps$-enlargements of convex bodies (see e.g.\ \cite[Theorem~46]{Morvan2008}), the Hausdorff-continuity of the corresponding coefficients in Steiner's formula (so called \emph{Quermassintegrale}; see \cite[Theorem~50]{Morvan2008}) and compactness of~$\-\Delta_{K-1}$, and the bound \eq{41}, we conclude that there is a constant~$S_K$ that only depends on the dimension~$K$ such that, for convex~$A\subset \-\Delta^{-\delta}_{K-1}$,~$\Vol(A^\eps\setminus A)\leq \eps S_K$ and~$\Vol(A\setminus A^{-\eps})\leq \eps S_K$, so that if~$\delta>\eps$, 
\be{
\IP[\tZ\in A_\circ^\eps\setminus A_\circ]\vee \IP[\tZ\in A_\circ\setminus A_\circ^{-\eps}]\leq \frac{\Gamma(s)}{\prod_{i=1}^K \Gamma(a_i)}(\delta-\eps)^{-\theta_\circ}\cdot \eps S_K
}
(note that an upper bound on~$S_K$ could be obtained in principle by evaluating the coefficients in the Steiner formula for the convex set~$\-\Delta_{K-1}$).
Note that in the previous display if~$\theta_\circ=0$ then the inequality still
holds without the factor of~$(\delta-\eps)$ even if~$\delta=\eps$.
Thus we have
\be{
	\IP[\tZ\in A^\eps\setminus A]\vee \IP[\tZ\in A\setminus A^{-\eps}]\leq C \bkle{\delta^{\theta_\wedge} + \eps\bklr{\I[\theta_\circ>0](\delta-\eps)^{-\theta_\circ}
	+\I[\theta_\circ=0]}}.
}
Choosing~$\delta = \eps^{1/(\theta_\wedge+\theta_\circ)}$, 
we have that~$\delta\geq\eps$,
and~$\delta=\eps$ only if~$\theta_\circ=0$, so that
\be{
	\sup_{A\in \cC_{K-1}}\abs{\IP[\tW\in A]-\IP[\tZ\in A]} \leq C\bbklr{c_0 + \frac{c_1}{\eps} + \frac{c_2}{\eps^2} + \frac{c_3}{\eps^3} + \eps^\theta},
}
for some constant~$C=C(\ta)$.
Without loss of generality we may assume that~$C\geq 1$ in \eq{40} so that if~$c_1+c_2+c_3 \geq 1$, then~\eq{40} is trivially true. If~$c_1+c_2+c_3 < 1$, choose~$\eps = (c_1+c_2+c_3)^{1/\klr{3+\theta}}<1$ and bound both~$1/\eps$ and~$1/\eps^2$ by~$1/\eps^3$; this again yields \eq{40}.
\end{proof}

\section{Proof of Theorem~\ref{THM1}: Wright-Fisher model}\label{sec2}

Recall the description in the introduction of the Wright-Fisher model with neutral mutation in a haploid population of constant size~$N$.
The process is driven by offspring vector having distribution~$\MN(N;1/N, \ldots, 1/N)$, and the 
mutation structure is general with~$K$ types. The process is a time-homogeneous Markov chain~$\tX(0), \tX(1), \ldots$, where~$\tX(n)$ is a~$(K-1)$ dimensional vector that represents the counts of the first~$K-1$ alleles in the population, so~$\tX(n)/N \in \-\Delta_K$.

Since~$(\tX(n))_{n\geq0}$ is a Markov chain on a finite state space, it has a stationary distribution, and we apply Theorem~\ref{THM3} to prove the
bound on the approximation of this stationary distribution by the Dirichlet distribution given by Theorem~\ref{THM1}.
To define a stationary pair~$(\tW, \tW')$, let~$\tX$ be distributed as a stationary distribution of the chain of Theorem~\ref{THM1} 
and let~$\tX'$ be a step in the chain from~$\tX$. 
Set~$\tW=\tX/N$ and~$\tW'=\tX'/N$. 

It is not difficult to see that 
the distribution of~$\tX'$ given~$\tX$ is the first~$K-1$ coordinates of a multinomial with~$N$ trials with success probabilities given by the vector
$\tq(\tX)$, where 
\ben{\label{42}
q_j(\tX)
	=\sum_{k=1}^Kp_{ k  j}\frac{X_ k}{N}  = \sum_{k=1}^{K-1} p_{ k  j}\frac{X_ k}{N} + p_{Kj}\bbbklr{1-\sum_{k=1}^{K-1}\frac{X_k}{N}}. 
}
Hence
\be{
	\IE[W_j'|\tW] 
		 = p_{jj}W_j + \sum_{\substack{k=1\\k\neq j}}^{K-1}p_{ k  j} W_k  + p_{Kj} - \sum_{k=1}^{K-1}p_{Kj} W_k,
}
so that
\bes{
	\IE[W_j'-W_j|\tW] 
		& = -(1-p_{jj})W_j + \sum_{\substack{k=1\\k\neq j}}^{K-1}p_{ k  j} W_k  + p_{Kj} - \sum_{k=1}^{K-1}p_{Kj} W_k \\
		& =\frac{1}{2N}(a_j -sW_j) + R_j(\tW),
}
where
\besn{\label{43}
	R_j(\tW) 
	& =  -\frac{a_j}{2N}+ \bbklr{\frac{s}{2N}-(1-p_{jj})}W_j+\sum_{\substack{k=1\\k\neq j}}^{K-1}p_{ k  j} W_k  + p_{Kj} - \sum_{k=1}^{K-1}p_{Kj} W_k \\
	& =  \bbklr{p_{Kj}-\frac{a_j}{2N}}(1-W_j)+\bbbklr{\,\sum_{\substack{k=1\\k\neq j}}^K\bbklr{\frac{a_k}{2N}-p_{jk}}}W_j+\sum_{\substack{k=1\\k\neq j}}^{K-1}(p_{ k  j}-p_{Kj}) W_k.
}
Thus we are in the setting of Theorem~\ref{THM3} with~$\ta$ as above,~$\Lambda=(2N)^{-1}\times \Id$, and~$\tR$ given by~\eq{43}. 

Applying the theorem is a relatively straightforward but somewhat tedious calculation involving conditioning and multinomial moment formulas. We need the quantities
\bg{
	T_j=p_{K j}+\sum_{ \substack{k=1\\k \neq j}}^{K-1} (p_{ k  j}- p_{K j}) W_{ k },\\
	\sigma_j= p_{Kj} + \sum_{\substack{k=1\\k\neq j}}^K p_{jk},\qquad \tau_j =  p_{Kj} + \sum_{\substack{k=1\\k\neq j}}^K\abs{p_{kj}-p_{Kj}}, \qquad 1\leq j\leq K-1.
}
Note that we can write
\ben{\label{44}
q_j:=q_j(\tX)=W_j(1-\sigma_j)+T_j.
}
We also record the following multinomial moment formula lemma. Let~$(n)_{ k \downarrow}=n(n-1)\cdots(n- k +1)$ denote the falling factorial.
\begin{lemma}\label{lem10}
For~$(\tX, \tX')$ defined above,~$i,j,k\in\{1,\ldots, K-1\}$ all distinct and non-negative integers~$ k _i,  k _j,  k _k$, 
\be{
\IE\left[ \left(X_i'\right)_{ k _i\downarrow}\left(X_j'\right)_{ k _j\downarrow}\left(X_k'\right)_{ k _k\downarrow} \big| \tX \right]
=\left( N \right)_{( k _i+ k _j+ k _k) \downarrow} q_i(\tX)^{ k _i} q_j(\tX)^{ k _j} q_k(\tX)^{ k _k}.
 }
\end{lemma}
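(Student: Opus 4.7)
The plan is to reduce this lemma to the standard factorial moment formula for the multinomial distribution. From the discussion immediately preceding the lemma (specifically \eq{42}), the conditional law of the full count vector $(X_1',\dots,X_K')$ given $\tX$ is $\MN_K(N;q_1(\tX),\dots,q_K(\tX))$, where $X_K' = N - \sum_{\ell=1}^{K-1} X_\ell'$ and $q_\ell(\tX)$ is as in \eq{42}. Hence it suffices to show, for $(Y_1,\dots,Y_K)\sim \MN_K(N;p_1,\dots,p_K)$ and any non-negative integers $m_1,\dots,m_K$ with $\sum_\ell m_\ell\leq N$, the identity
\be{
\IE\prod_{\ell=1}^K (Y_\ell)_{m_\ell\downarrow} = (N)_{(\sum_\ell m_\ell)\downarrow}\prod_{\ell=1}^K p_\ell^{m_\ell},
}
and then specialize by taking $m_i=k_i$, $m_j=k_j$, $m_k=k_k$, and $m_\ell=0$ for all other $\ell$.

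To prove the displayed multinomial identity, I would use the trial-based representation. Let $T_1,\dots,T_N$ be i.i.d.\ with $\IP[T_t=\ell]=p_\ell$, so $Y_\ell = \sum_{t=1}^N \mathbbm{1}\{T_t=\ell\}$ and $(Y_\ell)_{m_\ell\downarrow}$ is the number of ordered $m_\ell$-tuples of distinct trial indices $(t_1,\dots,t_{m_\ell})$ with $T_{t_r}=\ell$ for each $r$. Consequently, $\prod_{\ell} (Y_\ell)_{m_\ell\downarrow}$ equals the number of families of $K$ ordered tuples $(t_1^{(\ell)},\dots,t_{m_\ell}^{(\ell)})_{\ell=1}^K$ on pairwise-disjoint indices with trial $t_r^{(\ell)}$ landing in category $\ell$. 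Taking expectation, each such configuration contributes $\prod_\ell p_\ell^{m_\ell}$ by independence, and the number of configurations is exactly $(N)_{(\sum_\ell m_\ell)\downarrow}$, namely the number of ways to choose an ordered sequence of $\sum_\ell m_\ell$ distinct indices and allot them to the categories in blocks of sizes $m_1,\dots,m_K$.

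There is essentially no obstacle here; the identity is classical (it can alternatively be read off from the multinomial probability generating function $(p_1 z_1+\cdots+p_K z_K)^N$ by differentiating $m_\ell$ times in $z_\ell$ and setting all $z_\ell=1$). The only thing to be careful about in the application is that the factorial moment formula uses the full probability vector $(q_1,\dots,q_K)$, not just the $K-1$ coordinates tracked by $\tX'$; but since $m_\ell=0$ for $\ell\notin\{i,j,k\}$ the factor $q_K^{m_K}=1$ disappears and the statement comes out as written.
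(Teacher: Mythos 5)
Your proof is correct. The paper records Lemma~\ref{lem10} as a standard multinomial factorial-moment formula without proof, and your trial-based derivation (or, equivalently, the probability-generating-function computation you mention) correctly supplies the details; in particular, the reduction to the conditional multinomial law from~\eq{42} and the observation that the untracked coordinates contribute exponent zero are exactly the points one needs to check.
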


 \begin{lemma}\label{lem11}
 For~$(\tW,\tW')$ defined above,
 \bes{
 \IE\left[(W_j'-W_j)^2\big| \tW \right]&=W_j^2\left[-\frac{1}{N}+\frac{2\sigma_j}{N} +\sigma_j^2\left(1-\frac{1}{N}\right)\right] \\
 	&\qquad + W_j\left[\frac{1}{N} - \frac{2T_j}{N}-\frac{\sigma_j}{N}-2T_j\sigma_j\left(1-\frac{1}{N}\right)\right]
	+T_j\left[T_j+\frac{1-T_j}{N}\right].
 }
 \end{lemma}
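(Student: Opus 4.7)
The proof is essentially a direct calculation using the multinomial conditional distribution of $\tX'$ given $\tX$ together with the decomposition \eq{44} of $q_j$. The plan is as follows.

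First, I would observe that the conditional distribution of $X_j'$ given $\tX$ is $\Bi(N,q_j)$, which is just the marginal of the multinomial in Lemma~\ref{lem10} (equivalently, apply Lemma~\ref{lem10} with $k_j=2$ and $k_i=k_k=0$). This yields
\be{
\IE[(X_j')^2\mid\tX] = N(N-1)q_j^2 + Nq_j,\qquad \IE[X_j'\mid\tX]=Nq_j,
}
so that, after dividing by $N^2$,
\be{
\IE[(W_j')^2\mid\tW] = \tfrac{N-1}{N}q_j^2 + \tfrac{q_j}{N},\qquad \IE[W_j'\mid\tW] = q_j.
}

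Next, I would expand $(W_j'-W_j)^2 = (W_j')^2 - 2W_jW_j' + W_j^2$, take conditional expectations, and collect terms into the clean form
\ben{\label{eq:plan}
\IE[(W_j'-W_j)^2\mid\tW] = (q_j-W_j)^2 + \tfrac{1}{N}q_j(1-q_j).
}
This is the standard variance+bias decomposition for the binomial number $X_j'=NW_j'$.

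Finally, I would substitute $q_j = W_j(1-\sigma_j) + T_j$ from \eq{44} into \eq{eq:plan}, expand $(q_j-W_j)^2 = (-W_j\sigma_j + T_j)^2$ and $q_j^2$ in terms of $W_j^2$, $W_j$, and constants in $W_j$ (with coefficients in $\sigma_j$ and $T_j$), and group by powers of $W_j$. The coefficient of $W_j^2$ then combines to $-\tfrac{1}{N} + \tfrac{2\sigma_j}{N}+\sigma_j^2(1-\tfrac{1}{N})$, the coefficient of $W_j$ to $\tfrac{1}{N}-\tfrac{2T_j}{N}-\tfrac{\sigma_j}{N}-2T_j\sigma_j(1-\tfrac{1}{N})$, and the constant term to $T_j^2(1-\tfrac{1}{N})+\tfrac{T_j}{N}=T_j[T_j+\tfrac{1-T_j}{N}]$, exactly matching the claimed expression.

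The only real obstacle here is bookkeeping during the expansion; no analytic input is needed beyond the binomial variance and Lemma~\ref{lem10}. Nothing about the structure of $T_j$ or $\sigma_j$ is used at this stage, which is convenient since the dependence on the mutation probabilities $p_{ij}$ will be absorbed into these two quantities and dealt with in the subsequent bounds.
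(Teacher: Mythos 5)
Your proof is correct and follows essentially the same route as the paper: compute the conditional first and second moments of $W_j'$ from the binomial marginal (Lemma~\ref{lem10}), substitute $q_j = W_j(1-\sigma_j)+T_j$, and expand. The only cosmetic difference is that you pass through the tidy intermediate form $(q_j-W_j)^2 + \tfrac{1}{N}q_j(1-q_j)$, while the paper works directly with $\IE[X_j'(X_j'-1)\mid\tW]$; both reduce to the same algebra.
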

 \begin{proof}
We first expand
\be{
\IE\left[(W_j'-W_j)^2\big| \tW \right]=\frac{1}{N^2}\IE\left[ X_j'\left(X_j'-1\right) \big| \tW\right]
	-\left(2W_j-\frac{1}{N}\right) \IE[W_j'|\tW] + W_j^2.
}
Using Lemma~\ref{lem10} and the expression for~$\tq$ given at~\eq{44} we find
\be{
\frac{1}{N^2}\IE\left[ X_j'\left(X_j'-1\right) \big| \tW\right]=\frac{N-1}{N} \left(W_j \left(1-\sigma_j\right)+T_j\right)^2,
}
and
\be{
\IE[W_j'|\tW]=W_j \left(1-\sigma_j\right)+T_j.
}
Combining these last three displays and simplifying yields the result.
\end{proof}

\begin{lemma}\label{lem12}
For~$(\tW,\tW')$ defined above, and~$i\not=j$,
\bes{
\IE\left[(W_i'-W_i)(W_j'-W_j) \big| \tW \right]&=W_iW_j\left[-\frac{1}{N}+\frac{\sigma_i+\sigma_j}{N}+\sigma_i\sigma_j\left(1-\frac{1}{N}\right)\right] +T_iT_j\left(1-\frac{1}{N}\right) \\
&\qquad 
 -W_iT_j\left (\sigma_i+\frac{1-\sigma_i}{N}\right) -W_jT_i\left (\sigma_j+\frac{1-\sigma_j}{N}\right).
}
\end{lemma}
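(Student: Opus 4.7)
The plan is to mirror the proof of Lemma~\ref{lem11}, simply replacing squares by cross terms. I start by expanding
\be{
\IE\bkle{(W_i'-W_i)(W_j'-W_j)\bmid \tW}
	= \IE[W_i'W_j'\mid \tW] - W_j\,\IE[W_i'\mid \tW] - W_i\,\IE[W_j'\mid \tW] + W_iW_j.
}
For the cross second moment I apply Lemma~\ref{lem10} with $k_i = k_j = 1$ and $k_k = 0$; since $(X_i')_{1\downarrow}(X_j')_{1\downarrow} = X_i'X_j'$, this gives $\IE[X_i'X_j'\mid\tX] = N(N-1)\,q_i q_j$. Dividing by $N^2$ and substituting the expression $q_\ell = W_\ell(1-\sigma_\ell) + T_\ell$ from \eq{44}, I obtain
\be{
	\IE[W_i'W_j'\mid\tW] = \frac{N-1}{N}\bklr{W_i(1-\sigma_i)+T_i}\bklr{W_j(1-\sigma_j)+T_j}.
}
For the two linear terms I use the identity $\IE[W_\ell'\mid\tW] = W_\ell(1-\sigma_\ell) + T_\ell$ already derived in the proof of Lemma~\ref{lem11}.

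Expanding the product $(W_i(1-\sigma_i)+T_i)(W_j(1-\sigma_j)+T_j)$ yields four contributions ($W_iW_j$, $W_iT_j$, $W_jT_i$, $T_iT_j$), and I collect the coefficient of each of these four monomials across all three sources. The $T_iT_j$ coefficient is immediate: $1 - 1/N$. For $W_iT_j$ the coefficient is $\frac{N-1}{N}(1-\sigma_i) - 1$, which simplifies to $-\bklr{\sigma_i + (1-\sigma_i)/N}$, and symmetrically for $W_jT_i$. For $W_iW_j$ the coefficient is
\be{
	\frac{N-1}{N}(1-\sigma_i)(1-\sigma_j) - (1-\sigma_i) - (1-\sigma_j) + 1,
}
which after distributing $(1-\sigma_i)(1-\sigma_j) = 1 - \sigma_i - \sigma_j + \sigma_i\sigma_j$ collapses to $-\frac{1}{N} + \frac{\sigma_i+\sigma_j}{N} + \sigma_i\sigma_j\bklr{1 - \frac{1}{N}}$.

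There is no real obstacle here; the argument is pure bookkeeping driven by Lemma~\ref{lem10} and the formula for $\IE[W_\ell'\mid\tW]$. The only care needed is to group the four resulting monomials in the form stated and to verify the $W_iW_j$ coefficient simplification, which is the most error-prone step but is a one-line cancellation.
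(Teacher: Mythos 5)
Your proof is correct and is essentially identical to the paper's: same expansion of the cross-difference, same application of Lemma~\ref{lem10} to get $\IE[W_i'W_j'\mid\tW]=\frac{N-1}{N}q_iq_j$, and the same use of $\IE[W_\ell'\mid\tW]=W_\ell(1-\sigma_\ell)+T_\ell$, followed by collecting coefficients. Your coefficient computations for $T_iT_j$, $W_iT_j$, $W_jT_i$, and $W_iW_j$ all check out.
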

\begin{proof}
We first expand
\be{
\IE\left[(W_i'-W_i)(W_j'-W_j)|\tW\right]=\IE [W_i'W_j' | \tW] -W_i\IE[W_j'|\tW]-W_j\IE[W_i'|\tW]+W_i W_j.
}
Using Lemma~\ref{lem10} and the expression for~$\tq$ given at~\eq{44} we find
\be{
\IE [W_i'W_j' | \tW]=\frac{N-1}{N}  \left(W_i \left(1-\sigma_i\right)+T_i\right) \left(W_j \left(1-\sigma_j\right)+T_j\right),
}
and
\be{
W_i\IE[W_j'|\tW]=W_i\left(W_j \left(1-\sigma_j\right)+T_j\right).
}
Combining these last three displays and simplifying yields the result.
\end{proof}

\begin{lemma}\label{lem13}
For~$(\tW,\tW')$ defined above, and~$\lambda=(2N)^{-1}~$,
\ba{
\sum_{i,j=1}^{K-1} \IE &\left| W_i(\delta_{i j}-W_j)-\frac{1}{2 \lambda}\IE[ (W_i'-W_i)(W_j'-W_j)|\tW] \right|\leq N\sum_{i,j=1}^{K-1}(\sigma_i+\tau_i) \left(\sigma_j+\tau_j+\frac{2}{N}\right).
} 
\end{lemma}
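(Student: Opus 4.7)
My plan is to reformulate the conditional second moments in a way that makes the cancellations transparent, then simplify algebraically and apply crude bounds. The starting point is that conditional on~$\tW$, the vector $(NW_1',\dots,NW_{K-1}')$ consists of the first $K-1$ marginals of a multinomial with $N$ trials and probability vector $\tq=(q_1,\dots,q_{K-1})$ (plus $1-\sum q_k$), where from~\eq{44} we have $q_j=W_j(1-\sigma_j)+T_j$. The unified multinomial (co)variance formula then yields
\be{
	\IE\bkle{(W_i'-W_i)(W_j'-W_j)\bmid\tW}=\frac{q_i(\delta_{ij}-q_j)}{N}+\eta_i\eta_j,
}
where $\eta_j:=q_j-W_j=T_j-\sigma_jW_j$. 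This identity can be verified either directly from Lemma~\ref{lem10}, or by rearranging Lemmas~\ref{lem11} and~\ref{lem12}; in either case this is the main conceptual step.

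Substituting this into the target expression (with $1/(2\lambda)=N$) and expanding $q_i=W_i+\eta_i$, I expect the expression inside the absolute value to collapse to
\be{
	W_i(\delta_{ij}-W_j)-q_i(\delta_{ij}-q_j)-N\eta_i\eta_j
	=\begin{cases}-\eta_j(1-2W_j)-(N-1)\eta_j^2 & \text{if }i=j,\\ W_i\eta_j+W_j\eta_i-(N-1)\eta_i\eta_j & \text{if }i\neq j.\end{cases}
}
The $i=j$ case follows from $W_j(1-W_j)-q_j(1-q_j)=-\eta_j(1-2W_j)+\eta_j^2$, and the $i\neq j$ case from $q_iq_j-W_iW_j=W_i\eta_j+W_j\eta_i+\eta_i\eta_j$; both are one-line computations. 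Crucially, no stray term like $W_j(1-W_j)/2$ survives — the cancellation that was not visible in the original form is now exposed.

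Finally, the crude bounds $|T_j|\leq\tau_j$ (immediate from the definitions of $T_j$ and $\tau_j$, noting the $k=K$ summand in $\tau_j$ vanishes) and $0\leq W_j\leq 1$ give $|\eta_j|\leq\tau_j+\sigma_jW_j\leq\alpha_j$, where $\alpha_j:=\sigma_j+\tau_j$. Using in addition $|1-2W_j|\leq 1$ and $W_i\leq 1$, every term is uniformly bounded (in both cases, even the diagonal one) by $\alpha_i+\alpha_j+(N-1)\alpha_i\alpha_j$. Summing over $i,j\in\{1,\dots,K-1\}$ gives
\be{
	\sum_{i,j=1}^{K-1}\IE\babs{\cdots}\leq 2(K-1)\sum_{j=1}^{K-1}\alpha_j+(N-1)\bbklr{\sum_{j=1}^{K-1}\alpha_j}^2,
}
which, since $N-(N-1)=1$ and $(\sum\alpha_j)^2\geq 0$, is at most $N(\sum_j\alpha_j)^2+2(K-1)\sum_j\alpha_j=N\sum_{i,j}\alpha_i(\alpha_j+2/N)$, exactly the claimed bound.

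There is no real obstacle here: once the multinomial covariance identity is recognised, the remainder is routine algebra and triangle-inequality bookkeeping. The only place that requires a small amount of care is the algebraic simplification of the $i=j$ case, since writing things out naively from Lemma~\ref{lem11} produces an awkward apparent $W_j(1-W_j)/2$ term; using the compact $q_j,\eta_j$ variables bypasses this entirely.
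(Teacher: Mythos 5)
Your proof is correct. The key identity
$\IE\bigl[(W_i'-W_i)(W_j'-W_j)\mid\tW\bigr]=\frac{q_i(\delta_{ij}-q_j)}{N}+\eta_i\eta_j$
with $\eta_j=q_j-W_j=T_j-\sigma_jW_j$ is exactly what Lemmas~\ref{lem11} and~\ref{lem12} assert in expanded form, and your subsequent expansion and bounding (using $|\eta_j|\le\sigma_j+\tau_j$, $0\le W_i\le1$, the triangle inequality, and finally $N-1\le N$) lands on the claimed bound; I have checked that your $i=j$ and $i\ne j$ simplifications and the final summation are all arithmetically sound. This is essentially the same route the paper takes --- it also reduces Lemma~\ref{lem13} to Lemmas~\ref{lem11}--\ref{lem12} plus the triangle inequality, $0\le W_i\le1$, and $|T_j|\le\tau_j$ --- but the paper leaves the computation as a terse ``follows in a straightforward way,'' whereas your rewriting in the variables $(q_j,\eta_j)$ makes the cancellations explicit and is a cleaner bookkeeping of the same argument.
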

 \begin{proof}
 The lemma follows in a straightforward way from Lemmas~\ref{lem11} and~\ref{lem12}, the triangle inequality, that~$0\leq W_i\leq 1$,
 and~$\abs{T_j}\leq \tau_j$.
  \end{proof}

\begin{lemma}\label{lem14}
For~$(\tW, \tW')$ defined above and~$\lambda=(2N)^{-1}~$,
\ba{
\frac{1}{\lambda}\sum_{i,j,k=1}^{K-1}&\IE \abs{(W_i'-W_i)(W_j'-W_j)(W_k'-W_k)}\\
	&\leq \frac{2}{N^{1/2}} \left(\sum_{i=1}^{K-1} \left[\sqrt2+\sqrt N (\tau_i + \sigma_i) \right]\right)^{2} \left(\sum_{i=1}^{K-1} \left[1+\sqrt N (\tau_i + \sigma_i)\right]\right).
}

\end{lemma}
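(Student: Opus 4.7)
The plan is to reduce the triple sum to a product of single sums by two applications of Cauchy--Schwarz, and then bound an $L^4$-norm and an $L^2$-norm of $W_j'-W_j$ using the fact that, conditional on $\tW$, the vector $N\tW'$ has multinomial distribution with probabilities $q_j(\tW)$. First, I would apply Cauchy--Schwarz conditioning to split off the $k$-factor and then split $i$ and $j$:
\be{
\IE\abs{(W_i'-W_i)(W_j'-W_j)(W_k'-W_k)}
\leq \bklr{\IE(W_i'-W_i)^4}^{1/4}\bklr{\IE(W_j'-W_j)^4}^{1/4}\bklr{\IE(W_k'-W_k)^2}^{1/2},
}
so that summing over $i,j,k$ independently yields
\be{
\sum_{i,j,k}\IE\abs{(W_i'-W_i)(W_j'-W_j)(W_k'-W_k)}
\leq \bbklr{\sum_i\bklr{\IE(W_i'-W_i)^4}^{1/4}}^2\sum_k\bklr{\IE(W_k'-W_k)^2}^{1/2}.
}

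Next, I would estimate the two moment-quantities. Using $\IE[W_j'\mid\tW]=q_j$ from \eq{42} and the variance of a binomial,
\be{
\IE\bkle{(W_j'-W_j)^2\bmid \tW} = \frac{q_j(1-q_j)}{N}+(q_j-W_j)^2\leq \frac{1}{4N}+(\sigma_j+\tau_j)^2,
}
where the last inequality uses $q_j - W_j = -\sigma_j W_j + T_j$ together with $|T_j|\leq\tau_j$ and $0\leq W_j\leq 1$. Taking square roots and subadditivity of $\sqrt{\cdot}$ gives $(\IE(W_j'-W_j)^2)^{1/2}\leq 1/\sqrt N+(\sigma_j+\tau_j)$. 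For the fourth moment, since $X_j'\mid\tW$ is binomial$(N,q_j)$, the standard formula $\mu_4 = Nq_j(1-q_j)[1+(3N-6)q_j(1-q_j)]$ and $q_j(1-q_j)\leq 1/4$ give
\be{
\IE\bkle{(W_j'-q_j)^4\bmid \tW}\leq \frac{3}{16N^2}+\frac{1}{4N^3}\leq \frac{1}{N^2},\qquad N\geq 1.
}
Applying conditional Minkowski to the decomposition $W_j'-W_j = (W_j'-q_j) + (q_j-W_j)$, then taking unconditional expectation and using $|q_j-W_j|\leq\sigma_j+\tau_j$ pointwise, I obtain
\be{
\bklr{\IE(W_j'-W_j)^4}^{1/4}\leq \bklr{\IE(W_j'-q_j)^4}^{1/4}+\bklr{\IE|q_j-W_j|^4}^{1/4}\leq \frac{1}{\sqrt N}+(\sigma_j+\tau_j)\leq \sqrt{\frac{2}{N}}+(\sigma_j+\tau_j).
}

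Finally, assembling the pieces with $1/\lambda = 2N$ yields
\be{
\frac{1}{\lambda}\sum_{i,j,k}\IE\abs{\cdots}
\leq 2N\cdot\frac{1}{N}\bbklr{\sum_i\bkle{\sqrt 2+\sqrt N(\sigma_i+\tau_i)}}^2\cdot\frac{1}{\sqrt N}\sum_k\bkle{1+\sqrt N(\sigma_k+\tau_k)},
}
which is exactly the claimed bound after collecting the factors of $N^{\pm 1/2}$.

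The proof is essentially a bookkeeping exercise; the only mildly delicate step is deriving the $L^4$-bound, since a naive application of Cauchy--Schwarz on $\IE[(W_j'-W_j)^4]$ would produce spurious cross terms. Using Minkowski applied conditionally on $\tW$ (so that the deterministic shift $q_j-W_j$ can be pulled out cleanly) and then unconditionally (to separate the binomial fluctuation from the bias) avoids this and keeps the resulting bound in the additive form needed for the product structure in the final estimate.
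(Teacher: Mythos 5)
Your proof is correct and follows essentially the same route as the paper: the same H\"older estimate $\|fgh\|_1\le \|f\|_4\|g\|_4\|h\|_2$, the same decomposition of $W_j'-W_j$ into the binomial fluctuation $W_j'-q_j$ (the paper's $E_j/N$) plus the deterministic-given-$\tW$ bias $q_j-W_j$ (the paper's $G_j/N$), and Minkowski plus the binomial fourth central moment formula for the two pieces. The only differences are cosmetic (working directly with $\tW$ rather than $\tX$, and slightly sharper intermediate constants that you then loosen to match the stated bound).
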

\begin{proof}
Conditional on~$\tX$,~$\tX'$ is distributed as the first~$(K-1)$ entries of a multinomial distribution with~$N$ trials 
and success probabilities given by the vector at~\eq{44}: 
\be{
q_ k :=q_ k (\tX)=\left(W_ k  \left(1-\sigma_ k \right)+T_ k \right).
}
Decompose
\ba{
X_i' - X_i&=X_i' -\IE[X_i'| X_i] + \IE[X_i'| X_i] - X_i \\
	&= [X_i' - (X_i(1-\sigma_i) + NT_i)] + [NT_i - \sigma_iX_i]\\
	&=: E_i + G_i.
}
Using H\"older's inequality followed by Minkowski's inequality, we find
\ban{
\sum_{i,j,k}^{K-1} &\IE \abs{(W_i'-W_i)(W_j'-W_j)(W_k'-W_k)} = \frac{1}{N^3}\sum_{i,j,k}^{K-1}\IE \abs{(E_i + G_i)(E_j + G_j)(E_k + G_k)} \notag \\
	&\leq \frac{1}{N^3}\sum_{i,j,k}^{K-1}\left[\IE (E_i+G_i)^4 \IE (E_j+G_j)^4 \right]^{1/4}\left[\IE (E_k+G_k)^2\right]^{1/2} \notag \\
	&\leq \frac{1}{N^3}\left(\sum_{i=1}^{K-1}\left[(\IE E_i^4)^{1/4}+(\IE G_i^4)^{1/4} \right]\right)^2\sum_{k=1}^{K-1}\left[(\IE E_k^2)^{1/2}+(\IE G_i^2)^{1/2}\right]. \label{45}
}
Now noting that for~$Y\sim\Bin(n,p)$,
\bes{
\IE(Y-np)^4 &= 3(np(1-p))^2 + np(1-p)(1-6p(1-p)) \leq 3(np(1-p))^2 + np(1-p),
}
which, along with the variance formula for the binomial distribution, yields
\ba{
\IE E_i^4 &= 3(X_iq_i(1-q_i))^2 + X_iq_i(1-q_i) \leq 4N^2, \\
\IE E_i^2 &= X_i q_i (1-q_i) \leq N.
}
Plugging these bounds along with~$\abs{G_i} = \abs{NT_i - \sigma_i X_i} \leq N\tau_i + N\sigma_i$ into~\eq{45} yields the result.
\end{proof}

\begin{proof}[Proof of Theorem~\ref{THM1}] We apply Theorem~\ref{THM3} with~$\Lambda=(2N)^{-1}\times \Id$. Using the bounds in Lemmas~\ref{lem13} for~$A_2$ and~\ref{lem14} for~$A_1$ along with a straightforward bound on~$\abs{R_j}$ ($R_j$ given at~\eq{43}) for~$A_1$, we obtain
\ba{
A_1 &\leq  2N\sum_{j=1}^{K-1} \left[ | p_{K j} - \frac{a_j}{2N}| +\sum_{\substack{k=1\\k\not=j}}^{K}\abs{p_{j k}-\frac{a_k}{2N}}+\sum_{\substack{k=1\\k\not=j}}^{K-1} \abs{p_{k j}- p_{Kj}}\right],\\
A_2&\leq N\sum_{i,j=1}^{K-1}(\sigma_i+\tau_i) \bbklr{\sigma_j+\tau_j+\frac{2}{N}}, \\
A_3&\leq \frac{2}{N^{1/2}}\left(\,\sum_{i=1}^{K-1} \bklr{\sqrt{2}+\sqrt{N}(\sigma_i+ \tau_i)}\right)^{2}\left(\,\sum_{i=1}^{K-1} \bklr{1+\sqrt{N}(\sigma_i+ \tau_i)}\right),
}
The final bound in Theorem~\ref{THM1} is now obtained through straightforward manipulations and applying some standard analytic inequalities, in particular 
that~$\abs{x+y}^p\leq 2^{p-1}(\abs{x}^p+\abs{y}^p)$ for~$p\geq1$, and that
\be{
\sum_{i=1}^{K-1}(\sigma_i+\tau_i)\leq 2\sum_{i=1}^{K-1}p_{K i}+
\sum_{i=1}^{K-1}\sum_{\substack{ j\neq i}}^{K}p_{i j}+\sum_{i=1}^{K-1}\sum_{\substack{ j\neq i}}^{K-1}p_{ij}
+(K-2)\sum_{i=1}^{K-1} p_{K i}
\leq K \sum_{i=1}^K\sum_{\substack{j=1\\j\neq i}}^K p_{ij}
=K\mu.
}
\end{proof}

\section{Proof of Theorem~\ref{THM2}: Cannings model}\label{sec3}

Recall the description in the introduction of the Cannings exchangeable
model with neutral PIM mutation in a haploid population of constant size~$N$.
The process is driven by a generic exchangeable offspring vector 
$\tV$ with mutation structure such that~$p_{ij} = \pi_j$ for~$1\leq i \neq j\leq K$ and~$p_{ii}=1-\sum_{j\not=i} \pi_j$. To distinguish from the~$\pi_i$, we write~$p_i:=1-p_{ii}$ for the chance that an individual with parent of type~$i$ is not of type~$i$. As in the previous section, we apply Theorem~\ref{THM3}, and  to define a stationary pair~$(\tW, \tW')$, let~$\tX$ be distributed as a stationary distribution of the chain
and let~$\tX'$ be a step in the chain from~$\tX$. 
Set~$\tW=\tX/N$ and~$\tW'=\tX'/N$. 

We first compute~$\IE[X_i'-X_i|\tX]$. 
The first thing to note is that we can decompose the number of individuals of
type~$i$ in the~$\tX'$-generation into those that have parent of type~$i$ and those that do not.  In particular,  if we denote by~$M_i$ the number of offspring in~$\tV$ that originate from a parent of type~$i$ in the~$\tX$-generation and write~$\tM=(M_1,\ldots, M_K)$, then
\ben{
\law(X_i' | \tM)=\law(Y_1(\tM)+Y_2(\tM)), \label{46}
} 
where~$Y_1(\tM)\sim\Bin(N-M_i, \pi_i)$,~$Y_2(\tM)\sim \Bin(M_i,1- p_i)$, 
and these two variables are independent given~$\tM$. From here we easily have
\be{
\IE(X_i'|\tX,\tM) = \pi_i(N-M_i) + (1-p_i) M_i.
}
Now noting the exchangeability of~$\tV$ implies~$\IE V_j = 1$, and hence~$\IE (M_i|\tX) = X_i$, take the expectation with respect to~$\tM$ to find
\bes{
\IE(X_i'|\tX) &= \pi_i(N-X_i) + (1-p_i)X_i\\
	&= \pi_i N + (1-\sigma)X_i,
}
where~$\sigma = \sum_{i=1}^K \pi_i$. If we now set~$\tW = \tX/N$ and~$\tW' = \tX' / N$ we find that
\be{
\IE[\tW' - \tW | \tW] = \boldsymbol{\pi}-\sigma\tW.
}
Recalling our definition of~$\alpha$ from~\eq{3} and letting 
\be{
\ta=\frac{2(N-1)}{\alpha} \boldsymbol{\pi},
} 
we are in the setting of Theorem~\ref{THM3} with 
$\Lambda=\frac{\alpha}{2(N-1)}\times \Id$ and~$\tR=0$.
As in Section~\ref{sec2}, applying the theorem is a relatively straightforward but 
tedious calculation involving conditioning and computing various moment formulas.
For the latter we record the following lemma.
\begin{lemma}\label{47}
If\/~$\tV$ is a Cannings exchangeable offspring vector, if
$\alpha$,~$\beta$, and~$\gamma$ are the moments defined at~\eq{3}, and 
$\delta:=\IE \klg{V_1(V_1-1)(V_1-2)(V_1-3)}$,
then
\ban{
\IE V_1^2 &= 1+\alpha, \label{48}\\
\IE V_1V_2 &=1 - \alpha\frac{1}{N-1},\label{49}\\
\IE V_1^3 &= 1+3\alpha+\beta, \label{50}\\
\IE V_1 V_2 V_3&=1-\alpha\frac{3}{N-1}+\beta\frac{2}{(N-1)(N-2)},  \label{51}\\
\IE V_1^2 V_2&= 1+\alpha \frac{N-3}{N-1}-\beta\frac{1}{N-1},\label{52} \\
\IE V_1^2 V_2^2 &= 1+ \alpha \frac{2N-5}{N-1} -\beta \frac{2}{N-1} + \gamma,\label{53}\\
\IE V_1^4&= 1+7\alpha + 6 \beta + \delta \label{54}, \\
\begin{split}\label{55}
\IE V_1V_2V_3V_4&=1-\alpha\frac{6}{N-1}+\beta\frac{8}{(N-1)(N-2)} \\
	&\qquad+\gamma\frac{3}{(N-2)(N-3)}-\delta\frac{3}{(N-1)(N-2)(N-3)}, 
	\end{split}\\
\IE V_1^2 V_2 V_3&= 1+\alpha\frac{N-6}{N-1}-\beta \frac{2N-8}{(N-1)(N-2)}-\gamma\frac{1}{N-2}+\delta\frac{1}{(N-1)(N-2)}, \label{56} \\
\IE V_1^3 V_2 &= 1+\alpha \frac{3N-7}{N-1} +\beta \frac{N-6}{N-1}-\delta \frac{1}{N-1}. \label{57} 
}
\end{lemma}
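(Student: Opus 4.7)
The plan is to derive all ten identities recursively, using two elementary tools: (i) convert ordinary powers to factorial moments via $V_1^2=V_1(V_1-1)+V_1$, $V_1^3=V_1(V_1-1)(V_1-2)+3V_1(V_1-1)+V_1$ and $V_1^4=V_1(V_1-1)(V_1-2)(V_1-3)+6V_1(V_1-1)(V_1-2)+7V_1(V_1-1)+V_1$; (ii) exploit the constraint $\sum_{i=1}^N V_i=N$ together with exchangeability in the form
\[
\IE\bkle{f(V_1,\dots,V_j)\bklr{N-V_1-\cdots-V_j}}=(N-j)\,\IE\bkle{f(V_1,\dots,V_j)V_{j+1}}
\]
for any measurable $f$. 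Tool (i) gives the pure-power formulas \eq{48}, \eq{50}, \eq{54} instantly. Tool (ii), applied to well-chosen $f$, turns each still-unknown mixed moment into a linear combination of already-known ones plus one higher factorial moment on a single index, which in turn reduces via tool (i) to $\alpha,\beta,\gamma,\delta$.

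I would handle the identities in the order they are listed, so each uses only previously established ones. For $j=1$ and $f(V_1)=V_1$, identity (ii) reads $\IE[V_1(N-V_1)]=(N-1)\IE V_1V_2$; combined with $\IE V_1^2=1+\alpha$ this yields \eq{49}. Taking $f(V_1)=V_1(V_1-1)$ and simplifying $V_1^2(V_1-1)=V_1(V_1-1)(V_1-2)+2V_1(V_1-1)$ gives $(N-1)\IE V_1(V_1-1)V_2=(N-2)\alpha-\beta$, which with \eq{49} produces \eq{52}. For \eq{51}, apply (ii) with $j=2$ and $f(V_1,V_2)=V_1V_2$: $(N-2)\IE V_1V_2V_3=N\IE V_1V_2-2\IE V_1^2V_2$; substituting \eq{49} and \eq{52} and simplifying yields the claimed expression. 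Identity \eq{53} follows from the purely algebraic expansion
\[
V_1^2V_2^2=V_1(V_1-1)V_2(V_2-1)+V_1(V_1-1)V_2+V_1V_2(V_2-1)+V_1V_2
\]
combined with the intermediate formula for $\IE V_1(V_1-1)V_2$ found above and \eq{49}.

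The three remaining identities \eq{55}, \eq{56}, \eq{57} are handled by the same machinery but require more bookkeeping. For \eq{57}, take $f(V_1)=V_1(V_1-1)(V_1-2)$ and use $V_1^2(V_1-1)(V_1-2)=V_1(V_1-1)(V_1-2)(V_1-3)+3V_1(V_1-1)(V_1-2)$; this gives a linear relation that, combined with \eq{50}, determines $\IE V_1^3V_2$. For \eq{56}, take $f(V_1,V_2)=V_1(V_1-1)V_2$ in (ii), which yields a relation involving $\IE V_1^2(V_1-1)V_2$ (known from tool (i) and the identity for $\IE V_1^3V_2$) and $\IE V_1(V_1-1)V_2V_3$, the latter being $\IE V_1^2V_2V_3-\IE V_1V_2V_3$. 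Solving and plugging in \eq{51} and \eq{52} produces \eq{56}. Finally, \eq{55} follows from applying (ii) with $j=3$ and $f(V_1,V_2,V_3)=V_1V_2V_3$: $(N-3)\IE V_1V_2V_3V_4=N\IE V_1V_2V_3-3\IE V_1^2V_2V_3$; substituting \eq{51} and \eq{56} and simplifying gives the result.

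The main obstacle is not conceptual but algebraic bookkeeping: at each stage one must convert between $V_i^k$ and $(V_i)_k$ to expose $\alpha,\beta,\gamma,\delta$ cleanly, and then collect terms over the common denominator $(N-1)(N-2)(N-3)$. The potential pitfall is sign errors in the expansion of $V_1V_2V_3(N-V_1-V_2-V_3)$ required for \eq{55}, where three cross terms $\IE V_1^2V_2V_3$ must be treated symmetrically, but everything is mechanical once the reductions are set up carefully in the order above.
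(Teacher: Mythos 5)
Your proposal is correct and takes essentially the same route as the paper: both proofs derive all ten identities by converting ordinary powers to falling factorials and then exploiting $\sum_i V_i = N$ together with exchangeability, which is precisely the content of your tool~(ii) in the form $\IE\bigl[f(V_1,\dots,V_j)(V_1+\cdots+V_N)\bigr] = N\,\IE f(V_1,\dots,V_j)$. The paper solves small simultaneous systems (e.g.\ for \eqref{51}--\eqref{52} and for \eqref{55}--\eqref{57}) where you proceed sequentially by first isolating pure factorial quantities such as $\IE V_1(V_1-1)V_2$ and $\IE V_1(V_1-1)(V_1-2)V_2$, but this is only a reorganization of the same bookkeeping; one small expository omission is that in the step for \eqref{56} the term $\IE V_1(V_1-1)V_2^2 = \gamma + \IE V_1(V_1-1)V_2$ also arises and must be noted, though it is already expressible in known quantities and so causes no actual difficulty.
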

\begin{proof}
Since~$\sum_{i=1}^N V_1 =N$ and the~$V_i$'s are exchangeable, we have that~$\IE V_1=1$.
Thus~$\alpha=\IE V_1 (V_1-1)=\IE V_1^2 -1$ which is~\eq{48}. Note that similarly,
\be{
N=\IE V_1 (V_1+\cdots+V_N) = \IE V_1^2 + (N-1) \IE V_1 V_2 = \alpha+1 +(N-1)\IE V_1V_2,
}
and rearranging gives~\eq{49}.

For~\eq{50}, we have that
\be{
\IE V_1^3= \IE V_1(V_1-1)(V_1-2)+3\IE V_1^2 -2\IE V_1=\beta+3(\alpha+1)-2,
}
and further,
\ba{
N^2&=\IE V_1(V_1+\cdots + V_N)^2 =  \IE V_1^3  + 3(N-1)\IE V_1^2 V_2 + (N-1)(N-2) \IE V_1 V_2 V_3,\\
(\alpha+1)N&=\IE V_1^2(V_1+\cdots+V_N)=\IE V_1^3 + (N-1) \IE V_1^2 V_2.
}
Solving these two equations yields the expressions for~\eq{51} and~\eq{52}. 

Moving forward similarly, we have
\ba{
\IE V_1^2V_2^2&= \IE V_1(V_1-1)V_2(V_2-1)+2 \IE V_1^2V_2-\IE V_1 V_2, \\
\IE V_1^4&=\IE V_1(V_1-1)(V_1-2)(V_1-3)+6\IE V_1 (V_1-1)(V_1-2)+7\IE V_1(V_1-1)+\IE V_1,
}
and using previous expressions gives~\eq{53} and~\eq{54}. Along the same lines, we have
\ba{
\IE(V_1V_2V_3(V_1 + \cdots + V_N)) &= N \IE(V_1V_2V_3) = 3\IE(V_1^2V_2V_3) + (N-3) \IE(V_1V_2V_3V_4)\\
\IE(V_1^2V_2(V_1 + \cdots + V_N)) &= N\IE(V_1^2V_2) = \IE(V_1^3V_2) + \IE(V_1^2V_2^2) + (N-2)\IE(V_1^2V_2V_3)\\
\IE(V_1^3(V_1 + \cdots + V_N)) &= N \IE(V_1^3) = \IE(V_1^4) + (N-1)\IE(V_1^3V_2).
}
Plugging in values for known quantities in these three equations and solving yields~\eq{55},~\eq{56}, and~\eq{57}.
\end{proof}

We first work on the~$A_2$ term from Theorem~\ref{THM3} which only requires
two moments.

\begin{lemma}\label{58}
For~$\tV,\tX, \tM$ defined above,~$\alpha$ defined at~\eq{3}, and~$1\leq i\not=j\leq (K-1),$
\bes{
\IE(M_i|\tX) &= X_i,\\
\IE(M_i^2|\tX) &= X_i^2\left(1-\frac{\alpha}{N-1}\right) + X_i\frac{\alpha N}{N-1},\\
\IE(M_iM_j|\tX)& = X_iX_j\left(1 - \frac{\alpha}{N-1}\right).
}
\end{lemma}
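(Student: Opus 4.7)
The plan is to reduce each conditional expectation to a moment of the offspring vector $\tV$ and then substitute the moment formulas of Lemma~\ref{47}. The key structural observation is that, conditional on $\tX$, $M_i$ is a sum of $X_i$ of the entries of $\tV$—namely, those entries indexed by the parents of type $i$ in the $\tX$-generation. Since the assignment of parental types to the $N$ indices of $\tV$ is independent of $\tV$, by exchangeability of $\tV$ we may assume without loss of generality that the parents of type $i$ occupy indices $\{1,\dots,X_i\}$ and the parents of type $j$ occupy indices $\{X_i+1,\dots,X_i+X_j\}$. Consequently the joint conditional distribution of $(M_i,M_j)$ given $\tX$ coincides with that of $\bklr{\sum_{k=1}^{X_i}V_k,\,\sum_{k=X_i+1}^{X_i+X_j}V_k}$, and the two sums involve disjoint indices when $i\neq j$.

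With this reduction in place, each of the three identities follows from a short calculation using $\IE V_1=1$, $\IE V_1^2=1+\alpha$, and $\IE V_1V_2=1-\alpha/(N-1)$ from Lemma~\ref{47}. For the first identity, $\IE[M_i\mid\tX]=X_i\IE V_1=X_i$. For the second, I would expand
\be{
\IE[M_i^2\mid\tX]=X_i\IE V_1^2+X_i(X_i-1)\IE V_1V_2
=X_i(1+\alpha)+X_i(X_i-1)\bbklr{1-\frac{\alpha}{N-1}},
}
and collect the coefficients of $X_i^2$ and $X_i$, which after simplification yield $X_i^2(1-\alpha/(N-1))+X_i\alpha N/(N-1)$. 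For the third, disjointness of the index ranges for $M_i$ and $M_j$ gives
\be{
\IE[M_iM_j\mid\tX]=X_iX_j\,\IE V_1V_2=X_iX_j\bbklr{1-\frac{\alpha}{N-1}}
}
directly.

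I do not anticipate any real obstacle: the lemma is essentially a bookkeeping exercise once the exchangeability reduction is made. The only mild care needed is to justify that the random assignment of parents to types is independent of $\tV$, so that conditioning on $\tX$ preserves the exchangeability of the offspring counts within and across type classes; this is precisely how the Cannings model with mutation was set up in the introduction, where the genealogy and the mutation labels are drawn independently.
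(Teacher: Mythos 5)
Your proof is correct and matches the paper's argument essentially verbatim: both use exchangeability to reduce $M_i$, $M_i^2$, $M_iM_j$ to moments of $V_1$, $V_1^2$, $V_1V_2$ over disjoint blocks of indices, then substitute the formulas $\IE V_1=1$, $\IE V_1^2=1+\alpha$, $\IE V_1V_2=1-\alpha/(N-1)$ from Lemma~\ref{47}. Your extra remark about the independence of the type-assignment from $\tV$ is a small added justification for the ``without loss of generality'' step that the paper takes for granted.
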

\begin{proof}
Using exchangeability, without loss of generality,
\bes{ 
\IE(M_i|\tX) &= \IE[V_1 + \cdots + V_{X_i}|\tX] = X_i, \\
\IE(M_i^2|\tX) &= \IE[( V_1 + \cdots + V_{X_i})^2|\tX] = X_i \IE(V_1^2) + X_i(X_i-1)\IE(V_1V_2),\\
\IE(M_iM_j|\tX) &= \IE[ (V_1 + \cdots + V_{X_i})(V_{X_i+1} + \cdots + V_{X_i+X_j})|\tX]= X_i X_j \IE(V_1V_2),
}
The lemma now follows by using the formulas
for the moments of the~$V_i$ in Lemma~\ref{47}.
\end{proof}

\begin{lemma}\label{59}
For~$1\leq i\leq (K-1)$,~$(\tW, \tW'), \pi_i, p_i, \sigma$ defined above, and~$\alpha$
defined at~\eq{3},
\bes{
\IE[ (W_i' - W_i)^2 | \tW] &= W_i^2\left[\frac{-\alpha}{N-1} - \alpha \left(\frac{\sigma^2 - 2\sigma}{N-1}\right) + \sigma^2\right]\\
	&\ \ \ + W_i\left[ \frac{\alpha}{N-1} - \alpha \left(\frac{2\sigma -\sigma^2}{N-1}\right) + \frac{p_i(1-p_i) - \pi_i (1-\pi_i)}{N} - 2\pi_i\sigma\right]\\
	&\ \ \ + \pi_i(1-\pi_i)/N + \pi_i^2. 
}
\end{lemma}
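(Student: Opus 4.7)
The plan is to compute $\IE[(X_i'-X_i)^2\mid\tX]$ by a double conditioning, first on the parent-tagged offspring vector $\tM$ and then on $\tX$, and then rescale by $N^{-2}$.

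First, by~\eq{46}, conditional on $\tM$ we have $X_i' = Y_1(\tM) + Y_2(\tM)$ with the two summands independent binomials. The binomial mean and variance formulas then give
\be{
	\IE[X_i'\mid\tM] = \pi_i(N-M_i) + (1-p_i)M_i = \pi_i N + (1-\sigma)M_i,
}
where the second equality uses the identity $(1-p_i)-\pi_i = 1-\sigma$, which follows from $p_i = \sum_{j\neq i}\pi_j = \sigma - \pi_i$, and
\be{
	\Var(X_i'\mid\tM) = \pi_i(1-\pi_i)(N-M_i) + p_i(1-p_i)M_i.
}
Combining these two identities, $\IE[(X_i')^2\mid\tM]$ becomes an explicit quadratic polynomial in $M_i$ whose coefficients depend only on $N$, $\pi_i$, $p_i$, and $\sigma$.

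Second, I would take the outer conditional expectation with respect to $\tX$ and invoke Lemma~\ref{58} for $\IE[M_i\mid\tX]$ and $\IE[M_i^2\mid\tX]$. This produces $\IE[(X_i')^2\mid\tX]$ as an affine combination of $X_i$ and $X_i^2$ with coefficients involving $\alpha$, $\sigma$, $\pi_i$, $p_i$, and $N$. Then I would expand
\be{
	\IE[(W_i'-W_i)^2\mid\tW] = \frac{1}{N^2}\bklr{\IE[(X_i')^2\mid\tX] - 2X_i\IE[X_i'\mid\tX] + X_i^2}
}
and substitute the already-available $\IE[X_i'\mid\tX] = \pi_i N + (1-\sigma)X_i$ from the discussion preceding the lemma.

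Finally, grouping by powers of $W_i$ and using $(1-\sigma)^2 = 1 - 2\sigma + \sigma^2$ is expected to produce exactly the three summands stated: the constant term $\pi_i(1-\pi_i)/N + \pi_i^2$ comes from the variance of $Y_1(\tM)$ and the square of the mean; the $W_i^2$ coefficient collapses to $\sigma^2 - \alpha(1-\sigma)^2/(N-1)$, which is precisely the bracketed expression in the statement; and the $W_i$ coefficient emerges from cross terms between the two binomial variances and the two moments of $M_i$. The main obstacle is purely algebraic bookkeeping: one must carefully balance the two binomial variance contributions against the two $M_i$-moments so that the target coefficients line up with the particular form written in the statement. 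No further probabilistic input beyond Lemmas~\ref{47} and~\ref{58} is required.
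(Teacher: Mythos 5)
Your proposal is correct and is essentially the paper's own argument: condition on $\tM$, use the binomial decomposition~\eq{46} to get the first and second conditional moments of $X_i'$, average over $M_i$ via Lemma~\ref{58}, and then simplify after dividing by $N^2$. The only cosmetic difference is that the paper expands $\IE[(X_i'-X_i)^2\mid\tX,\tM]$ in one step whereas you assemble it from $\IE[(X_i')^2\mid\tX]$ and $\IE[X_i'\mid\tX]$; the algebraic content is identical.
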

\begin{proof}
Using the decomposition of~\eq{46},
\bes{
\IE[ (X_i' - X_i)^2 | \tX, \tM] &= (N-M_i)\pi_i(1-\pi_i) + (N-M_i)^2\pi_i^2 + M_i(1-p_i)p_i + M_i^2(1-p_i)^2\\
	&\ \ \ + 2(N-M_i)\pi_iM_i(1-p_i)- 2X_i(\pi_i N + (1-\sigma)M_i) + X_i^2\\
	&= M_i^2(1-\sigma)^2\\
	&\ \ \ + M_i[p_i(1-p_i)  -\pi_i(1-\pi_i) - 2N\pi_i^2 + 2N \pi_i(1-p_i)-2X_i(1-\sigma)]\\
	&\ \ \ + N\pi_i(1-\pi_i) + N^2 \pi_i^2 - 2N \pi_iX_i  + X_i^2. \\ 
}
Now taking expectation with respect to~$M_i$ using Lemma~\ref{58},
\bes{
\IE[ (X_i' - X_i)^2 | \tX] 
	&= \left[X_i^2\left(1-\frac{\alpha}{N-1}\right) + X_i\frac{\alpha N}{N-1}\right](1-\sigma)^2\\
	&\qquad + X_i[ p_i(1-p_i)  -\pi_i(1-\pi_i) - 2N\pi_i^2 + 2N\pi_i(1-p_i)-2X_i(1-\sigma)]\\
	&\qquad + N\pi_i(1-\pi_i) + N^2 \pi_i^2 - 2N \pi_iX_i  + X_i^2 \\
		&= X_i^2\left[1 + \left(1-\frac{\alpha}{N-1}\right)(1-\sigma)^2 - 2(1-\sigma)\right]\\
	&\ \ \ + X_i\left[ \frac{\alpha N}{N-1}(1-\sigma)^2- 2N\pi_ip_i + p_i(1-p_i) - \pi_i(1-\pi_i) -2N\pi_i^2  \right]\\
	&\ \ \ + N\pi_i(1-\pi_i) + N^2 \pi_i^2. 
}
Dividing this last expression by~$N^2$ and rearranging gives the lemma.
\end{proof}
\begin{lemma}\label{60}
For~$1\leq i\not=j \leq (K-1)$,~$(\tW, \tW'), \pi_i, \pi_j, p_i, p_j,\sigma$ defined above, and~$\alpha$
defined at~\eq{3},
\bes{
\IE[ (W_i'-W_i)(W_j'-W_j) | \tW ] &= W_iW_j\left[  -\frac{\alpha}{N-1} + \frac{\alpha\sigma(2-\sigma)}{N-1} + \sigma^2\right]\\
	&\qquad+\left(W_i\pi_j+W_j\pi_i\right)\left[ -\frac{1}{N} -\frac{N-1}{N}\sigma \right]+\frac{N-1}{N}\pi_i\pi_j.
}
\end{lemma}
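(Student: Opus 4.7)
The plan is to parallel the proof of Lemma~\ref{59}: first I compute $\IE[X_i'X_j' | \tX,\tM]$ in closed form by exploiting the conditional multinomial structure of the offspring types given the parent-type counts $\tM$, then take expectation with respect to $\tM$ using Lemma~\ref{58}, and finally combine with the already-computed $\IE[W_i' | \tW] = \pi_i + (1-\sigma) W_i$ via the expansion
\be{
	\IE[(W_i'-W_i)(W_j'-W_j)| \tW] = \IE[W_i'W_j'| \tW]-W_i\IE[W_j'| \tW]-W_j\IE[W_i'| \tW]+W_iW_j.
}

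For the conditional second moment, I partition the $N$ offspring in the $\tX'$-generation according to the type of their parent. Conditionally on $\tM$, the type-counts among children of type-$k$ parents, say $(Y^{(k)}_1,\ldots,Y^{(k)}_K)$, are $\MN_K(M_k;p_{k,1},\ldots,p_{k,K})$ and are independent across $k$. Since $X_i'=\sum_k Y^{(k)}_i$, standard multinomial covariance gives
\be{
	\IE[X_i' X_j' | \tX, \tM] = \sum_{k,\ell} M_k M_\ell\,p_{k,i}p_{\ell,j} - \sum_k M_k\,p_{k,i}p_{k,j}.
}
Under PIM, $p_{k,i}=\pi_i$ for $k\neq i$ and $p_{i,i}=1-\sigma+\pi_i$, so $\sum_k M_k p_{k,i} = M_i(1-\sigma)+N\pi_i$ and $\sum_k M_k p_{k,i}p_{k,j} = (1-\sigma)(M_i\pi_j+M_j\pi_i)+N\pi_i\pi_j$, which collapses the above to
\be{
	\IE[X_i' X_j' | \tX,\tM] = M_iM_j(1-\sigma)^2 + (N-1)(1-\sigma)(M_i\pi_j+M_j\pi_i) + N(N-1)\pi_i\pi_j.
}

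Taking expectation with respect to $\tM$ using Lemma~\ref{58} (in particular $\IE[M_iM_j|\tX] = X_iX_j(1-\alpha/(N-1))$) and dividing by $N^2$ yields a formula for $\IE[W_i'W_j'|\tW]$. Substituting this into the expansion in the first paragraph, using $W_i\IE[W_j'|\tW]+W_j\IE[W_i'|\tW] = W_i\pi_j+W_j\pi_i+2(1-\sigma)W_iW_j$, and collecting the coefficients of $W_iW_j$, $W_i\pi_j+W_j\pi_i$, and $\pi_i\pi_j$ gives the claim; the coefficient of $W_iW_j$ simplifies via the identity $(1-\sigma)^2-2(1-\sigma)+1=\sigma^2$, which cleanly produces the $\sigma^2$ and $\alpha\sigma(2-\sigma)/(N-1)$ contributions. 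I do not anticipate any genuine obstacle; the only point requiring care is correctly accounting for the within-group multinomial covariance $-M_kp_{k,i}p_{k,j}$ (the single sum in the first display above), and the rest is mechanical simplification that closely mirrors Lemma~\ref{59}.
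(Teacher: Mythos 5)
Your proposal is correct and follows essentially the same route as the paper: condition on $\tM$, exploit the multinomial structure of offspring types given parent-type counts to compute $\IE[X_i'X_j'\mid\tX,\tM]$, integrate over $\tM$ via Lemma~\ref{58}, and expand the product. The only cosmetic difference is that you index by all $K$ parent types whereas the paper collapses the $K-2$ ``other'' types into a single group (valid under PIM); the resulting formula $M_iM_j(1-\sigma)^2+(N-1)(1-\sigma)(M_i\pi_j+M_j\pi_i)+N(N-1)\pi_i\pi_j$ and the ensuing simplification are identical.
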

\begin{proof}
Given~$\tM$, we can write~$(X_i', X_j', N-X_i'-X_j')$ 
as the sum of three independent multinomial random
variables corresponding to the counts of types~$i$,~$j$ and neither~$i$ or~$j$
in~$\tX'$ coming from 
individuals in the previous~$\tX$-generation having types~$i$,~$j$, and neither~$i$ or~$j$.
Then the parameters of these multinomials are
$M_i, (1-p_i, \pi_j, 1-p_i-\pi_j)$;
$M_j, (\pi_i,1-p_j, \pi_j, 1-p_j-\pi_i)$; and 
$N-M_i-M_j, (\pi_i, \pi_j, 1-\pi_i-\pi_j)$.
From this description and multinomial moment formulas (e.g., Lemma~\ref{lem10}),
it's straightforward to find that
\bes{
\IE[X_i'X_j'| \tX, \tM] &= M_i(M_i-1)(1-p_i)\pi_j + M_j(M_j-1)\pi_i(1-p_j) \\
	&\qquad + (N-M_i-M_j)(N-M_i-M_j-1)\pi_i\pi_j\\
	&\qquad+M_i(1-p_i)M_j(1-p_j) + M_i(1-p_i)(N-M_i-M_j) \pi_j \\
	&\qquad+ M_j\pi_iM_i\pi_j+ M_j\pi_i(N-M_i-M_j)\pi_j \\
	&\qquad+ (N-M_i-M_j)\pi_iM_i\pi_j + (N-M_i-M_j)\pi_i M_j (1-p_j)\\
	&=M_i M_j (1-\sigma)^2+(M_i \pi_j+M_j \pi_i)(1-\sigma)(N-1)+N(N-1)\pi_i \pi_j.
}
Also note that
\be{
\IE[X_i'X_j | \tX, \tM] = X_j[(N-M_i)\pi_i + (1-p_i)M_i],
}
so that these last two displays and Lemma~\ref{58} 
imply
\bes{
\IE[(X_i'-X_i)(X_j'-X_j)|\tX] &= X_iX_j\left[  (1-\sigma)^2\left(1-\frac{\alpha}{N-1}\right) - 2(1-\sigma) + 1\right]\\
	&\qquad+ ( X_i \pi_j+X_j\pi_i)[-(N-1) \sigma -1]+ N(N-1) \pi_i\pi_j.
}
Dividing this last expression by~$N^2$ and rearranging gives the lemma.
\end{proof}

The next lemma summarizes the bound on the~$A_2$ term of Theorem~\ref{THM3}
for this example.

\begin{lemma}\label{lem15}
For~$(\tW, \tW')$ and~$\sigma$ defined above and~$\alpha$ defined at~\eq{3}, 
if~$\lambda = \alpha/(2(N-1))$, then 
\bes{
\frac{1}{\lambda} \sum_{i,j=1}^{K-1} \IE& \left|\lambda W_i(\delta_{i j}-W_j)-\frac{1}{2}\IE[ (W_i'-W_i)(W_j'-W_j)|\tW] \right|\\
	&\leq  \sigma^2\left[ (K-1)^2 + \frac{N-1}{\alpha}\left( K^2 +1\right)\right]  + \sigma \left[ 2(K-1)^2 + \frac{3K-5}{\alpha} \right].
}
\end{lemma}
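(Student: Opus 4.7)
The plan is to substitute the conditional-second-moment formulas from Lemmas~\ref{59} and~\ref{60} directly into the target sum, exploit the fact that $\lambda=\alpha/(2(N-1))$ is calibrated so that the $\alpha/(N-1)$-scale terms cancel, and then bound the residual by elementary means. I would split the double sum into a diagonal piece ($i=j$) handled by Lemma~\ref{59} and an off-diagonal piece ($i\neq j$) handled by Lemma~\ref{60}.

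For the diagonal piece, I form $\lambda W_i(1-W_i)-\tfrac{1}{2}\IE[(W_i'-W_i)^2\mid\tW]$ and observe that the $W_i^2$- and $W_i$-pieces of size $\alpha/(N-1)$ in the conditional second moment cancel exactly against the corresponding contributions of $\lambda(W_i-W_i^2)$. After dividing through by $\lambda$ the residual regroups as
\be{
\sigma(2-\sigma)W_i(1-W_i) - \tfrac{N-1}{\alpha}\sigma^2 W_i^2 + \tfrac{2(N-1)\pi_i\sigma}{\alpha} W_i - \tfrac{N-1}{\alpha N}\bklr{W_ip_i(1-p_i)+(1-W_i)\pi_i(1-\pi_i)} - \tfrac{N-1}{\alpha}\pi_i^2,
}
where I have combined the $W_i$-linear and constant pieces containing $p_i(1-p_i)$ and $\pi_i(1-\pi_i)$ via the identity $W_ip_i(1-p_i)-W_i\pi_i(1-\pi_i)+\pi_i(1-\pi_i) = W_ip_i(1-p_i)+(1-W_i)\pi_i(1-\pi_i)$. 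Taking absolute values, summing over $i=1,\dots,K-1$, and applying $W_i,1-W_i\in[0,1]$, $p_i=\sigma-\pi_i\leq\sigma$, $p_i(1-p_i)\leq p_i$, $\pi_i(1-\pi_i)\leq\pi_i$, $\sum_i\pi_i\leq \sigma$, and $\sum_i\pi_i^2\leq \sigma^2$ gives the diagonal contribution.

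For the off-diagonal piece, the analogous cancellation in $-\lambda W_iW_j-\tfrac{1}{2}\IE[(W_i'-W_i)(W_j'-W_j)\mid\tW]$ leaves, after division by $\lambda$,
\be{
-\sigma(2-\sigma)W_iW_j - \tfrac{N-1}{\alpha}\sigma^2 W_iW_j + \tfrac{(N-1)[1+(N-1)\sigma]}{\alpha N}(W_i\pi_j+W_j\pi_i) - \tfrac{(N-1)^2}{\alpha N}\pi_i\pi_j.
}
Taking absolute values and summing over the $(K-1)(K-2)\leq(K-1)^2$ off-diagonal pairs, bounding $W_iW_j\leq 1$, and using $\sum_{i\neq j}(W_i\pi_j+W_j\pi_i)\leq 2(K-2)\sigma$, $\sum_{i\neq j}\pi_i\pi_j\leq\sigma^2$, together with $(N-1)/N\leq 1$ to absorb one factor of $N-1$ into $1/\alpha$, gives the off-diagonal contribution. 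Adding the two and grouping by coefficients of $\sigma$, $\sigma^2$, $\sigma/\alpha$, and $\sigma^2(N-1)/\alpha$ yields exactly the four constants $2(K-1)^2$, $(K-1)^2$, $3K-5$, and $K^2+1$ claimed in the lemma. The main obstacle is the careful algebraic bookkeeping; in particular, the trick of combining the $W_i$-linear and constant pieces of the diagonal residual (rather than bounding them separately) is essential to obtain the stated constant $3K-5$ instead of a looser $3K-4$, and the $(N-1)/N\leq1$ absorption in the off-diagonal constant piece is what prevents a spurious factor of $N-1$ from inflating the $\sigma^2$-coefficient beyond $K^2+1$.
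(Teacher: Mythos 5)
Your proposal is correct and follows essentially the same route as the paper's proof: substitute the second-moment formulas from Lemmas~\ref{59} and~\ref{60}, observe that the $\alpha/(N-1)$-scale terms cancel with $\lambda W_i(\delta_{ij}-W_j)$ by design, then take absolute values term by term with $W_i\in[0,1]$, $p_i=\sigma-\pi_i\le\sigma$, $\sum_i\pi_i\le\sigma$, $\sum_i\pi_i^2\le\sigma^2$, and $(N-1)/N\le 1$. The paper compresses the bookkeeping (it does not write out the regrouped residual, but its intermediate bound contains the same $p_i(1-p_i)+\pi_i(1-\pi_i)$ combination that your $W_ip_i(1-p_i)+(1-W_i)\pi_i(1-\pi_i)$ grouping produces), so you have simply made explicit the steps the paper leaves implicit.
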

\begin{proof}
Using Lemmas~\ref{59} and~\ref{60},
\bes{
\frac{1}{\lambda} &\sum_{i,j=1}^{K-1} \IE \left|\lambda W_i(\delta_{i j}-W_j)-\frac{1}{2}\IE[ (W_i'-W_i)(W_j'-W_j)|\tW] \right|\\
&\leq \frac{N-1}{\alpha}\sum_{i=1}^{K-1} \left(\alpha \left(\frac{\sigma^2 + 2\sigma}{N-1}\right) + \sigma^2 + \frac{p_i(1-p_i) + \pi_i (1-\pi_i)}{N} + 2\pi_i\sigma+ \pi_i^2\right) \\
	&\ \ \ + \frac{N-1}{\alpha} \sum_{i \neq j}^{K-1} \left( \frac{\alpha}{N-1}(2\sigma + \sigma^2)  + \frac{\pi_i + \pi_j}{N} + \sigma^2 +\sigma\pi_i + \sigma\pi_j + \pi_i \pi_j\right)\\
	&\leq (K-1)(\sigma^2 + 2\sigma) +  \frac{N-1}{\alpha}[(K-1)\sigma^2 + (K-1)\sigma/N + 3\sigma^2 ]\\
	&\quad + (K-1)(K-2)(\sigma^2 + 2\sigma) + \frac{N-1}{\alpha}\left[\frac{ 2(K-2)\sigma}{N} + \sigma^2((K-1)(K-2)+2(K-2)+1) \right],\\	
	&\leq \sigma^2\left[ (K-1)^2 + \frac{N-1}{\alpha}\left( K^2 +1\right)\right]  + \sigma \left[ 2(K-1)^2 + \frac{3K-5}{\alpha} \right].\qedhere
}
\end{proof}

To compute the~$A_3$ term of Theorem~\ref{THM3}, we
need higher moment information.

\begin{lemma}\label{61}
For~$\tV,\tX, \tM$ defined above
and~$1\leq i\leq (K-1),$
\bes{
\IE(M_i^3|\tX) 
 &= X_i^3 [\IE(V_1V_2V_3)] + X_i^2[ 3\IE(V_1^2V_2) - 3\IE(V_1V_2V_3)]\\
	& \qquad +  X_i[ \IE(V_1^3) - 3\IE(V_1^2V_2) + 2\IE(V_1V_2V_3)]\\
\IE(M_i^4|\tX)
&= X_i^4 [\IE(V_1V_2V_3V_4)] + X_i^3[ 6\IE(V_1^2V_2V_3) - 6\IE(V_1V_2V_3V_4)]\\
	&\qquad + X_i^2 [ 4\IE(V_1^3V_2) + 3\IE(V_1^2V_2^2) - 18\IE(V_1^2V_2V_3) + 11\IE(V_1V_2V_3V_4)]\\
	&\qquad + X_i[ \IE(V_1^4) - 4\IE(V_1^3V_2) - 3\IE(V_1^2V_2^2) + 12\IE(V_1^2V_2V_3) - 6 \IE(V_1V_2V_3V_4)].
}
\end{lemma}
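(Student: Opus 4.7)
The plan is to exploit exchangeability of the offspring vector $\tV$ together with a multinomial-style expansion of powers of a sum. The key starting observation: since $\tV$ is exchangeable and, conditional on $\tX$, the random set of labels for parents of type $i$ can (by exchangeability) be taken to be $\{1,2,\ldots,X_i\}$, one has
\be{
\IE(M_i^k \mid \tX) = \IE\bkle{(V_1+V_2+\cdots+V_n)^k}\Big|_{n=X_i}
}
for each $k$. Thus the proof reduces to computing, for $k=3$ and $k=4$, the polynomial in $n$ given by $\IE[(V_1+\cdots+V_n)^k]$, and then substituting $n=X_i$ and regrouping by powers of $X_i$.

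To compute $\IE[(V_1+\cdots+V_n)^k]$, I would expand
\be{
(V_1+\cdots+V_n)^k = \sum_{i_1,\ldots,i_k=1}^n V_{i_1}\cdots V_{i_k},
}
and partition the ordered tuples $(i_1,\ldots,i_k)$ according to the set-partition of $\{1,\ldots,k\}$ telling which positions hold equal indices; by exchangeability of $\tV$, all tuples of the same partition-type contribute the same expectation. For $k=3$ there are three types, $(3)$, $(2,1)$, $(1,1,1)$, with ordered-tuple counts $n$, $3n(n-1)$, $n(n-1)(n-2)$, yielding
\be{
\IE[(V_1+\cdots+V_n)^3] = n\,\IE V_1^3 + 3n(n-1)\,\IE V_1^2V_2 + n(n-1)(n-2)\,\IE V_1V_2V_3.
}
For $k=4$ there are five types, $(4),(3,1),(2,2),(2,1,1),(1,1,1,1)$, with respective ordered-tuple counts $n$, $4n(n-1)$, $3n(n-1)$, $6n(n-1)(n-2)$, and $n(n-1)(n-2)(n-3)$ (the factor $3$ in the $(2,2)$ count comes from $\binom{4}{2}/2=3$, reflecting the symmetry between the two blocks of size two).

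The final step is just substitution and bookkeeping: set $n = X_i$, expand the falling factorials using
\ba{
n(n-1) &= n^2-n,\\
n(n-1)(n-2) &= n^3-3n^2+2n,\\
n(n-1)(n-2)(n-3) &= n^4-6n^3+11n^2-6n,
}
and collect coefficients of $X_i, X_i^2, X_i^3, X_i^4$. For $k=3$ the resulting coefficients read off immediately and reproduce the first displayed identity. For $k=4$ the coefficient of $X_i^3$ equals $6\IE V_1^2V_2V_3 - 6\IE V_1V_2V_3V_4$; of $X_i^2$ equals $4\IE V_1^3V_2 + 3\IE V_1^2V_2^2 - 18\IE V_1^2V_2V_3 + 11\IE V_1V_2V_3V_4$; and of $X_i$ equals $\IE V_1^4 - 4\IE V_1^3V_2 - 3\IE V_1^2V_2^2 + 12\IE V_1^2V_2V_3 - 6\IE V_1V_2V_3V_4$, matching the stated formula.

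There is no real conceptual obstacle; the main effort is the combinatorial bookkeeping in the $k=4$ case, in particular getting the multiplicity $3n(n-1)$ for the $(2,2)$ pattern correct (halving to avoid double-counting the two interchangeable blocks) and the multiplicity $6n(n-1)(n-2)$ for the $(2,1,1)$ pattern correct. Everything else is a routine rearrangement.
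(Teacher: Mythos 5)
Your proposal is correct and follows essentially the same route as the paper: by exchangeability and independence of $\tV$ from $\tX$, one writes $\IE(M_i^k\mid\tX)=\IE[(V_1+\cdots+V_{X_i})^k\mid\tX]$, expands the $k$-th power by grouping ordered index tuples according to their partition type (with the same multiplicities $n$, $3n(n-1)$, $n(n-1)(n-2)$ for $k=3$, and $n$, $4n(n-1)$, $3n(n-1)$, $6n(n-1)(n-2)$, $n(n-1)(n-2)(n-3)$ for $k=4$), and then collects powers of $X_i$. The paper's proof of Lemma~\ref{61} does exactly this and closes with ``rearranging these equations,'' which is the bookkeeping step you carry out explicitly.
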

\begin{proof}
Similar to the proof of Lemma~\ref{47}, exchangeability implies
\bes{ 
\IE(M_i^3|\tX) &= \IE[(V_1 + \cdots + V_{X_i})^3|\tX],\\
	&= X_i \IE(V_1^3) + 3X_i(X_i-1)\IE(V_1^2V_2) + X_i(X_i-1)(X_i-2) \IE(V_1V_2V_3),\\
\IE(M_i^4|\tX) &= \IE[(V_1 + \cdots + V_{X_i})^4|\tX],\\
	&=X_i \IE(V_1)^4 + 4X_i(X_i-1)\IE(V_1^3V_2) + 3X_i(X_i-1)\IE(V_1^2V_2^2),\\
	&\qquad + 6X_i(X_i-1)(X_i-2) \IE(V_1^2V_2V_3) + X_i(X_i-1)(X_i-2)(X_i-3) \IE(V_1V_2V_3V_4).
}
The lemma now follows by rearranging these equations.
\end{proof}

\begin{lemma}\label{lem16}
For~$(\tW, \tW')$ and~$\sigma$ defined above and~$\alpha,\beta,\gamma, \delta$ defined at~\eq{3}
and~$N>1$, 
if~$\lambda = \alpha/(2(N-1))$, then 
\ba{
&\frac{1}{\lambda}\sum_{i,j,k=1}^{K-1}\IE |(W_i'-W_i)(W_j'-W_j)(W_k'-W_k)|\\
	&\quad \leq 2(K-1)^3\left(\left( \frac{3\sigma^2}{N\alpha} + \frac{\sigma}{N^2\alpha} \right)^{1/4} + \left( \frac{\rho}{N^3\alpha}\right)^{1/4} + \left(\frac{N\sigma^4}{\alpha}\right)^{1/4} \right)^2 \left(\sqrt{\frac\sigma\alpha}+ 1 + \sqrt{\frac{N\sigma^2}{\alpha}}\right).
}
where
\be{
\rho := 
\frac{  N^2\beta}{2(N-1)}+ \frac{(3 N^4) \gamma }{(N-2)(N-3)}+ \frac{(4N^4+3N^2)\delta}{(N-1)(N-2)(N-3)}.
}
\end{lemma}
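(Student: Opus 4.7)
The plan is to follow the template of the Wright-Fisher proof (Lemma~\ref{lem14}), with one extra layer to accommodate the more general exchangeable offspring vector $\tV$. Decompose
\[
X_i'-X_i = E_i + F_i + G_i,
\]
where $E_i := X_i' - \IE[X_i'\mid \tX,\tM]$ is the mutation noise conditional on the offspring-count vector $\tM=(M_1,\dots,M_K)$, $F_i := \IE[X_i'\mid \tX,\tM] - \IE[X_i'\mid \tX] = (1-\sigma)(M_i-X_i)$ is the genealogical fluctuation of $\tM$ about its conditional mean, and $G_i := \IE[X_i'\mid \tX] - X_i = N\pi_i - \sigma X_i$ is the deterministic drift. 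Since $\IE[E_i\mid \tX,\tM]=0$ and $\IE[F_i\mid\tX]=0$, H\"older's inequality applied exactly as in~\eq{45} yields
\[
\sum_{i,j,k=1}^{K-1}\IE\babs{(W_i'-W_i)(W_j'-W_j)(W_k'-W_k)} \leq \tfrac{1}{N^3}\bbbklr{\sum_{i=1}^{K-1}\bkle{(\IE E_i^4)^{1/4}+(\IE F_i^4)^{1/4}+(\IE G_i^4)^{1/4}}}^{2}\sum_{k=1}^{K-1}\bkle{(\IE E_k^2)^{1/2}+(\IE F_k^2)^{1/2}+(\IE G_k^2)^{1/2}}.
\]

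Two of the three moment bounds are comparatively easy. For the drift, the deterministic bound $|G_i|\leq N(\pi_i+\sigma)\leq 2N\sigma$ (using $\pi_i\le\sigma$) gives $(\IE G_i^2)^{1/2},(\IE G_i^4)^{1/4}\leq 2N\sigma$. Conditional on $(\tX,\tM)$, the mutation noise $E_i$ is a sum of two independent centered binomials with sizes $N-M_i, M_i$ and success probabilities $\pi_i, 1-p_i$; combining the standard identities $\IE(Y-np)^2=np(1-p)$ and $\IE(Y-np)^4\le 3(np(1-p))^2+np(1-p)$ with $\pi_i(1-\pi_i)\le\sigma$, $p_i(1-p_i)\le\sigma$, and $\IE M_i=\IE X_i\leq N$ yields $\IE E_i^2\leq CN\sigma$ and $\IE E_i^4\leq C(N^2\sigma^2+N\sigma)$. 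The variance of $F_i=(1-\sigma)(M_i-X_i)$ follows directly from Lemma~\ref{58}: $\IE F_i^2 = (1-\sigma)^2\frac{\alpha}{N-1}\IE[X_i(N-X_i)]\leq C\alpha N$.

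The main obstacle is bounding $\IE F_i^4$. Expand
\[
\IE[(M_i-X_i)^4\mid\tX]=\IE[M_i^4\mid\tX]-4X_i\IE[M_i^3\mid\tX]+6X_i^2\IE[M_i^2\mid\tX]-4X_i^3\IE[M_i\mid\tX]+X_i^4,
\]
and substitute the formulas of Lemma~\ref{61}, which express $\IE[M_i^k\mid\tX]$ as polynomials in $X_i$ whose coefficients are the joint $\tV$-moments tabulated in Lemma~\ref{47}. Using $\IE M_i=X_i$ and the formulas for $\IE V_1V_2V_3V_4$, $\IE V_1^2V_2V_3$, $\IE V_1^2V_2^2$, $\IE V_1^3V_2$, and $\IE V_1^4$, the leading $X_i^4,X_i^3,X_i^2$ contributions telescope, and the surviving terms are controlled by $X_i^2\beta/(N-1)$, $X_i^3\gamma/((N-2)(N-3))$, and $X_i^3\delta/((N-1)(N-2)(N-3))$. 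Bounding $X_i\le N$ in these factors produces $\IE F_i^4\leq C\rho$. The delicate part is verifying that all higher-order contributions involving $\alpha^2 N^k$ and mixed $\alpha\beta$ terms actually cancel out across the substitutions, which requires careful tracking of signs; otherwise one would pick up spurious terms that ruin the bound.

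Finally, multiplying the H\"older estimate above by $1/\lambda=2(N-1)/\alpha$, inserting the moment bounds, and recognizing the contributions of $E_i$, $F_i$, $G_i$ as the three fourth-root summands $\bklr{3\sigma^2/(N\alpha)+\sigma/(N^2\alpha)}^{1/4}$, $\bklr{\rho/(N^3\alpha)}^{1/4}$, $\bklr{N\sigma^4/\alpha}^{1/4}$, and the three square-root summands $\sqrt{\sigma/\alpha}$, $1$, $\sqrt{N\sigma^2/\alpha}$ respectively, produces the stated inequality after using $\sum_{i=1}^{K-1}1\leq K-1$ for the outer sums and routine constant bookkeeping.
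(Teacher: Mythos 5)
Your proposal reproduces the paper's proof almost step for step: the three-term decomposition $X_i'-X_i=E_i+F_i+G_i$ with exactly the paper's definitions, the H\"older--Minkowski estimate from display \eq{62}, the conditional binomial moment bounds for $E_i$, the use of Lemma~\ref{58} for $\IE F_i^2$, the fourth-moment expansion of $\IE[(M_i-X_i)^4\mid\tX]$ via Lemmas~\ref{61} and~\ref{47} for $\IE F_i^4$, and the final bookkeeping that converts the moment bounds into the three fourth-root and square-root summands. So this is essentially the same argument as the paper's.

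One small point worth tightening: you bound $|G_i|$ by $N(\pi_i+\sigma)\leq 2N\sigma$, but the stated inequality requires the sharper $|G_i|=|N\pi_i-X_i\sigma|\leq N\sigma$ (immediate since both $N\pi_i$ and $X_i\sigma$ lie in $[0,N\sigma]$); with the looser bound the $G$-contributions would carry spurious factors of $2$ that are not present in the lemma. Likewise ``$\IE F_i^4\leq C\rho$'' should be the exact $\IE F_i^4\leq\rho$, which follows from the sign inequalities the paper records (e.g.\ $\alpha X_i(-X_i-5N+6)\leq 0$), and the cancellations you flag are of the constant, $\alpha$-, and $\beta$-terms in the $X_i^4$ coefficient (there are no $\alpha^2$ or $\alpha\beta$ terms to worry about, since Lemma~\ref{47} is linear in $\alpha,\beta,\gamma,\delta$).
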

\begin{proof}
Decompose
\ba{
X_i' - X_i &= [X_i' - (M_i(1-p_i) + (N-M_i)\pi_i)] + [(M_i-X_i)(1-\sigma)] + [N\pi_i - X_i \sigma]\\
	&=:E_i + F_i + G_i.
}
Using H\"older's inequality followed by Minkowski's inequality, we find
\ban{
\sum_{i,j,k}^{K-1}& \IE \abs{(W_i'-W_i)(W_j'-W_j)(W_k'-W_k)} \notag\\
	&\qquad= \frac{1}{N^3}\sum_{i,j,k}^{K-1}\IE \abs{(E_i +F_i+ G_i)(E_j +F_j+ G_j)(E_k +F_k+ G_k)}\notag \\
	&\qquad	\leq \frac{1}{N^3}\sum_{i,j,k}^{K-1}\left[\IE (E_i+F_i+G_i)^4 \IE (E_j+F_j+G_j)^4 \right]^{1/4}\left[\IE (E_k+F_k+G_k)^2\right]^{1/2}\notag\\
	\begin{split}\label{62}
	& \qquad \leq \frac{1}{N^3}\left(\sum_{i=1}^{K-1}\left[(\IE E_i^4)^{1/4}+(\IE F_i^4)^{1/4}+(\IE G_i^4)^{1/4} \right]\right)^2 \\
	&\qquad\qquad\qquad\qquad \times \sum_{k=1}^{K-1}\left[(\IE E_k^2)^{1/2}+(\IE F_k^2)^{1/2}+(\IE G_k^2)^{1/2}\right].
	\end{split}
}
Recall the decomposition~\eq{46} of~$\law(X_i'|\tM)=\law(Y_1(\tM)+Y_2(\tM))$ as a sum of
conditionally (on~$\tM$) 
independent binomials
and note that if~$Y\sim\Bin(n,p)$ then 
\bes{
\IE(Y-np)^4 &= 3(np(1-p))^2 + np(1-p)(1-6p(1-p)) \leq 3(np(1-p))^2 + np(1-p),
}
so that
\ba{
\IE[E_i^4|\tM]&= \IE[(Y_1(\tM) - \IE [Y_1(\tM)|\tM] + Y_2(\tM)- \IE [Y_2(\tM)|\tM])^4| \tM] \\
  &\leq  3 (M_ip_i(1-p_i))^2 + M_ip_i(1-p_i)	+ 6 M_i(1-p_i)p_i(N-M_i)\pi_i(1-\pi_i)\\
	& \ \ \ + 3((N-M_i)\pi_i(1-\pi_i))^2 + (N-M_i)\pi_i(1-\pi_i)\\
	&\leq 3(N(p_i(1-p_i) + \pi_i(1-\pi_i)))^2 + N(p_i(1-p_i) + \pi_i(1-\pi_i))\\
	&\leq 3(N \sigma)^2 + N\sigma.
}
Using a similar argument for the second moment, we thus have for all~$1\leq i \leq K-1$,
\ben{\label{63}
\IE E_i^2\leq N \sigma,  \hspace{1cm} \IE E_i^4\leq 3(N\sigma)^2 + N\sigma.
}
Now note that~$|G_i| \leq (N-X_i) \pi_i + X_i(\sigma-\pi_i) \leq N\sigma$, so that for all~$1\leq i \leq K-1$,
\ben{\label{64}
\IE G_i^2\leq (N\sigma)^2, \hspace{1cm} \IE G_i^4\leq  (N\sigma)^4.
}
For the~$F_i=M_i-X_i$ moments, first note that Lemma~\ref{58} implies 
\ben{\label{65}
\IE[F_i^2|\tX]=\IE[(M_i - X_i)^2|\tX]  
=\frac{\alpha X_i (N-X_i)}{N-1} \leq \frac{\alpha N^2}{N-1}.
}
Furthermore, using Lemmas~\ref{47},~\ref{58}, and~\ref{61},
\ba{
&\IE[(M_i - X_i)^4| \tX] = \IE(M_i^4 | \tX) - 4X_i \IE(M_i^3|\tX) + 6X_i^2 \IE(M_i^2|\tX) - 4X_i^3 \IE(M_i|X_i) + X_i^4\\
&\quad= X_i^4\left\{ \frac{3\gamma}{(N-2)(N-3)} + \frac{(-3)\delta}{(N-1)(N-2)(N-3)}\right\}\\
	&\qquad + X_i^3\left\{ \frac{-6 N\gamma}{(N-2)(N-3)} + \frac{6N \delta}{(N-1)(N-2)(N-3)}\right\}\\ 
	&\qquad + X_i^2 \left\{ \frac{-\alpha}{N-1}  + \frac{(-2N+4)\beta}{(N-1)(N-2)} + \frac{(3N^2 + 3N -3)\gamma}{(N-2)(N-3)}  + \frac{(-4N^2 + 2N - 3)\delta}{(N-1)(N-2)(N-3)} \right\}\\
	&\qquad + X_i \left\{ \frac{(-5N + 6)\alpha}{N-1} + \frac{(2N^2 -4N)\beta}{(N-1)(N-2)}+ \frac{(-3N^2+3N)\gamma}{(N-2)(N-3)} + \frac{(N^3 - 2N^2 + 3N)\delta}{(N-1)(N-2)(N-3)}\right\}.
}
Now using that~$0\leq X_i \leq N$ (and assuming~$N > 1$),
we have 
\ba{
&\alpha X_i (-X_i -5N+6) \leq 0,  &  &\beta X_i((-2N+4)X_i+(2N^2-4N))\leq \beta N^2(N-2)/2, \\
& 3\gamma X_i^3(X_i-2N)\leq 0, & &3\gamma X_i((N^2 + N -1)X_i-N^2+N)\leq 3\gamma N^4, \\
& 3 \delta X_i^3(-X_i+2N)\leq 3\delta N^4, & & \delta[X_i^2(-4N^2 + 2N - 3)+X_i(N^3 - 2N^2 + 3N)]\leq \delta(N^4+3N^2).
}
Combining these inequalities with the previous display, we have
\ben{\label{66}
\IE F_i^4=\IE[(M_i - X_i)^4] \leq \frac{  N^2\beta}{2(N-1)}+ \frac{(3 N^4) \gamma }{(N-2)(N-3)}+ \frac{(4N^4+3N^2)\delta}{(N-1)(N-2)(N-3)}= \rho.
}
Now using the inequalities~\eq{63},~\eq{64},~\eq{65}, and~\eq{66} in~\eq{62} yields
the lemma.
\end{proof}

\begin{proof}[Proof of Theorem~\ref{THM2}] We apply Theorem~\ref{THM3} with~$\Lambda = \frac{\alpha}{2(N-1)}\times \Id$.
From Lemmas~\ref{lem15} for~$A_2$ 
and~\ref{lem16} for~$A_3$ we obtain
\ba{
A_2&\leq \sigma^2\left[ (K-1)^2 + \frac{N-1}{\alpha}\left( K^2 +1\right)\right]  + \sigma \left[ 2(K-1)^2 + \frac{3K-5}{\alpha} \right],\\
A_3	&\leq 2(K-1)^3\left(\left( \frac{3\sigma^2}{N\alpha} + \frac{\sigma}{N^2\alpha} \right)^{1/4} + \left( \frac{\rho}{N^3\alpha}\right)^{1/4} + \left(\frac{N\sigma^4}{\alpha}\right)^{1/4} \right)^2 \left(\sqrt{\frac\sigma\alpha}+ 1 + \sqrt{\frac{N\sigma^2}{\alpha}}\right),
}
where
\be{
	\rho := 
\frac{  N^2\beta}{2(N-1)}+ \frac{(3 N^4) \gamma }{(N-2)(N-3)}+ \frac{(4N^4+3N^2)\delta}{(N-1)(N-2)(N-3)}.
}
The final bound in Theorem~\ref{THM2} is now obtained through straightforward manipulations and applying some standard analytic inequalities,
in particular,~$\sigma=\eta(\alpha/N)$ and~$\delta\leq (N-3)\beta$.
\end{proof}

\section*{Acknowledgments}

We thank the anonymous referee for helpful comments and for pointing out an omission in an earlier version of the manuscript (proof of existence of partial derivatives of the solution to the Stein equation).
NR received support from ARC grant DP150101459; AR received support from NUS Research Grant R-155-000-124-112. This work was done
partially while the authors were visiting the Institute for Mathematical Sciences, National University of Singapore in 2015. The visit was supported by the Institute. HG would also like to thank the School of Mathematics at the University of Melbourne for their hospitality while some of this work was done.

%

\begin{thebibliography}{}

\bibitem[Appell et~al., 2014]{Appell2014}
Appell, J., Bana{\'s}, J., and Merentes, N. (2014).
\newblock {\em Bounded variation and around}, volume~17 of {\em De Gruyter
  Series in Nonlinear Analysis and Applications}.
\newblock De Gruyter, Berlin.

\bibitem[Barbour, 1990]{Barbour1990}
Barbour, A.~D. (1990).
\newblock Stein's method for diffusion approximations.
\newblock {\em Probab. Theory Related Fields}, 84(3):297--322.

\bibitem[Barbour et~al., 2000]{Barbour2000}
Barbour, A.~D., Ethier, S.~N., and Griffiths, R.~C. (2000).
\newblock A transition function expansion for a diffusion model with selection.
\newblock {\em Ann. Appl. Probab.}, 10(1):123--162.

\bibitem[Bentkus, 2003]{Bentkus2003}
Bentkus, V. (2003).
\newblock On the dependence of the {B}erry-{E}sseen bound on dimension.
\newblock {\em J. Statist. Plann. Inference}, 113(2):385--402.

\bibitem[Bhaskar et~al., 2014]{Bhaskar2014}
Bhaskar, A., Clark, A.~G., and Song, Y.~S. (2014).
\newblock Distortion of genealogical properties when the sample is very large.
\newblock {\em Proc. Natl. Acad. Sci. USA}, 111(6):2385--2390.

\bibitem[Bhaskar et~al., 2012]{Bhaskar2012}
Bhaskar, A., Kamm, J.~A., and Song, Y.~S. (2012).
\newblock Approximate sampling formulae for general finite-alleles models of
  mutation.
\newblock {\em Adv. in Appl. Probab.}, 44(2):408--428.

\bibitem[Cannings, 1974]{Cannings1974}
Cannings, C. (1974).
\newblock The latent roots of certain {M}arkov chains arising in genetics: a
  new approach. {I}. {H}aploid models.
\newblock {\em Adv. in Appl. Probab.}, 6:260--290.

\bibitem[Chatterjee, 2014]{Chatterjee2014}
Chatterjee, S. (2014).
\newblock A short survey of {S}tein's method.
\newblock In Jang, S.~Y., Kim, Y.~R., Lee, D.-W., and Yie, I., editors, {\em
  Proceedings of the {I}nternational {C}ongress of {M}athematicians, {S}eoul
  2014, Volume {IV}, Invited Lectures}, pages 1--24, Seoul, Korea. KYUNG MOON
  SA Co. Ltd.

\bibitem[Chatterjee et~al., 2011]{Chatterjee2011}
Chatterjee, S., Fulman, J., and R{\"o}llin, A. (2011).
\newblock Exponential approximation by {S}tein's method and spectral graph
  theory.
\newblock {\em ALEA Lat. Am. J. Probab. Math. Stat.}, 8:197--223.

\bibitem[Chatterjee and Meckes, 2008]{Chatterjee2008}
Chatterjee, S. and Meckes, E. (2008).
\newblock Multivariate normal approximation using exchangeable pairs.
\newblock {\em ALEA Lat. Am. J. Probab. Math. Stat.}, 4:257--283.

\bibitem[Chatterjee and Shao, 2011]{Chatterjee2011a}
Chatterjee, S. and Shao, Q.-M. (2011).
\newblock Nonnormal approximation by {S}tein's method of exchangeable pairs
  with application to the {C}urie-{W}eiss model.
\newblock {\em Ann. Appl. Probab.}, 21(2):464--483.

\bibitem[Chen et~al., 2011]{Chen2011}
Chen, L. H.~Y., Goldstein, L., and Shao, Q.-M. (2011).
\newblock {\em Normal approximation by {S}tein's method}.
\newblock Probability and its Applications (New York). Springer, Heidelberg.

\bibitem[D\"obler, 2012]{Dobler2012}
D\"obler, C. (2012).
\newblock A rate of convergence for the arcsine law by {S}tein's method.
\newblock Preprint \url{http://arxiv.org/abs/1207.2401}.

\bibitem[D{\"o}bler, 2015]{Dobler2015}
D{\"o}bler, C. (2015).
\newblock Stein's method of exchangeable pairs for the beta distribution and
  generalizations.
\newblock {\em Electron. J. Probab.}, 20:no. 109, 1--34.

\bibitem[Ethier, 1976]{Ethier1976}
Ethier, S.~N. (1976).
\newblock A class of degenerate diffusion processes occurring in population
  genetics.
\newblock {\em Comm. Pure Appl. Math.}, 29(5):483--493.

\bibitem[Ethier and Kurtz, 1986]{Ethier1986}
Ethier, S.~N. and Kurtz, T.~G. (1986).
\newblock {\em Markov processes}.
\newblock Wiley Series in Probability and Mathematical Statistics: Probability
  and Mathematical Statistics. John Wiley \& Sons Inc., New York.
\newblock Characterization and convergence.

\bibitem[Ethier and Kurtz, 1992]{Ethier1992}
Ethier, S.~N. and Kurtz, T.~G. (1992).
\newblock On the stationary distribution of the neutral diffusion model in
  population genetics.
\newblock {\em Ann. Appl. Probab.}, 2(1):24--35.

\bibitem[Ethier and Norman, 1977]{Ethier1977}
Ethier, S.~N. and Norman, M.~F. (1977).
\newblock Error estimate for the diffusion approximation of the
  {W}right--{F}isher model.
\newblock {\em Proc. Natl. Acad. Sci. USA}, 74(11):5096--5098.

\bibitem[Fu, 2006]{Fu2006}
Fu, Y.-X. (2006).
\newblock Exact coalescent for the {W}right-{F}isher model.
\newblock {\em Theor. Popul. Biol.}, 69(4):385--394.

\bibitem[Fulman and Ross, 2013]{Fulman2013}
Fulman, J. and Ross, N. (2013).
\newblock Exponential approximation and {S}tein's method of exchangeable pairs.
\newblock {\em ALEA Lat. Am. J. Probab. Math. Stat.}, 10(1):1--13.

\bibitem[Goldstein and Reinert, 2013]{Goldstein2013}
Goldstein, L. and Reinert, G. (2013).
\newblock Stein's method for the beta distribution and the
  {P}{\'o}lya-{E}ggenberger urn.
\newblock {\em J. Appl. Probab.}, 50(4):1187--1205.

\bibitem[Gorham et~al., 2016]{Gorham2016}
Gorham, J., Duncan, A.~B., Vollmer, S.~J., and Mackey, L. (2016).
\newblock Measuring sample quality with diffusions.
\newblock Preprint \url{https://arxiv.org/abs/1611.06972}.

\bibitem[G{{\"o}}tze, 1991]{Gotze1991}
G{{\"o}}tze, F. (1991).
\newblock On the rate of convergence in the multivariate {CLT}.
\newblock {\em Ann. Probab.}, 19(2):724--739.

\bibitem[Griffiths and Tavare, 1994]{Griffiths1994}
Griffiths, R. and Tavare, S. (1994).
\newblock Simulating probability distributions in the coalescent.
\newblock {\em Theoret. Population Biol.}, 46(2):131--159.

\bibitem[Griffiths and Li, 1983]{Griffiths1983}
Griffiths, R.~C. and Li, W.-H. (1983).
\newblock Simulating allele frequencies in a population and the genetic
  differentiation of populations under mutation pressure.
\newblock {\em Theor. Popul. Biol.}, 23(1):19--33.

\bibitem[Kingman, 1982a]{Kingman1982}
Kingman, J. F.~C. (1982a).
\newblock The coalescent.
\newblock {\em Stochastic Process. Appl.}, 13(3):235--248.

\bibitem[Kingman, 1982b]{Kingman1982b}
Kingman, J. F.~C. (1982b).
\newblock Exchangeability and the evolution of large populations.
\newblock In {\em Exchangeability in probability and statistics ({R}ome,
  1981)}, pages 97--112. North-Holland, Amsterdam-New York.

\bibitem[Kingman, 1982c]{Kingman1982a}
Kingman, J. F.~C. (1982c).
\newblock On the genealogy of large populations.
\newblock {\em J. Appl. Probab.}, (Special Vol. 19A):27--43.
\newblock Essays in statistical science.

\bibitem[Lessard, 2007]{Lessard2007}
Lessard, S. (2007).
\newblock An exact sampling formula for the {W}right-{F}isher model and a
  solution to a conjecture about the finite-island model.
\newblock {\em Genetics}, 177(2):1249--1254.

\bibitem[Lessard, 2010]{Lessard2010}
Lessard, S. (2010).
\newblock Recurrence equations for the probability distribution of sample
  configurations in exact population genetics models.
\newblock {\em J. Appl. Probab.}, 47(3):732--751.

\bibitem[Mahmoud, 2009]{Mahmoud2009}
Mahmoud, H.~M. (2009).
\newblock {\em P{\'o}lya urn models}.
\newblock Texts in Statistical Science Series. CRC Press, Boca Raton, FL.

\bibitem[M{\"o}hle, 2000]{Mohle2000}
M{\"o}hle, M. (2000).
\newblock Total variation distances and rates of convergence for ancestral
  coalescent processes in exchangeable population models.
\newblock {\em Adv. in Appl. Probab.}, 32(4):983--993.

\bibitem[M{\"o}hle, 2004]{Mohle2004}
M{\"o}hle, M. (2004).
\newblock The time back to the most recent common ancestor in exchangeable
  population models.
\newblock {\em Adv. in Appl. Probab.}, 36(1):78--97.

\bibitem[M{\"o}hle and Sagitov, 2001]{Mohle2001}
M{\"o}hle, M. and Sagitov, S. (2001).
\newblock A classification of coalescent processes for haploid exchangeable
  population models.
\newblock {\em Ann. Probab.}, 29(4):1547--1562.

\bibitem[M{\"o}hle and Sagitov, 2003]{Mohle2003}
M{\"o}hle, M. and Sagitov, S. (2003).
\newblock Coalescent patterns in diploid exchangeable population models.
\newblock {\em J. Math. Biol.}, 47(4):337--352.

\bibitem[Morvan, 2008]{Morvan2008}
Morvan, J.-M. (2008).
\newblock {\em Generalized curvatures}, volume~2 of {\em Geometry and
  Computing}.
\newblock Springer-Verlag, Berlin.

\bibitem[Mukhopadhyay, 2012]{Mukhopadhyay2012}
Mukhopadhyay, S.~N. (2012).
\newblock {\em Higher order derivatives}, volume 144 of {\em Chapman \&
  Hall/CRC Monographs and Surveys in Pure and Applied Mathematics}.
\newblock CRC Press, Boca Raton, FL.
\newblock In collaboration with P. S. Bullen.

\bibitem[Pek{{\"o}}z et~al., 2014]{Pekoz2014a}
Pek{{\"o}}z, E.~A., R{{\"o}}llin, A., and Ross, N. (2014).
\newblock Joint degree distributions of preferential attachment random graphs.
\newblock Preprint \url{http://arxiv.org/abs/1402.4686}.

\bibitem[Reinert and R{{\"o}}llin, 2009]{Reinert2009}
Reinert, G. and R{{\"o}}llin, A. (2009).
\newblock Multivariate normal approximation with {S}tein's method of
  exchangeable pairs under a general linearity condition.
\newblock {\em Ann. Probab.}, 37(6):2150--2173.

\bibitem[Rinott and Rotar, 1997]{Rinott1997}
Rinott, Y. and Rotar, V. (1997).
\newblock On coupling constructions and rates in the {CLT} for dependent
  summands with applications to the antivoter model and weighted
  {$U$}-statistics.
\newblock {\em Ann. Appl. Probab.}, 7(4):1080--1105.

\bibitem[R{\"o}llin, 2008]{Rollin2008}
R{\"o}llin, A. (2008).
\newblock A note on the exchangeability condition in {S}tein's method.
\newblock {\em Statist. Probab. Lett.}, 78(13):1800--1806.

\bibitem[Ross, 2011]{Ross2011}
Ross, N. (2011).
\newblock Fundamentals of {S}tein's method.
\newblock {\em Probab. Surv.}, 8:210--293.

\bibitem[Russell, 1973]{Russell1973}
Russell, A.~M. (1973).
\newblock Functions of bounded {$k$}th variation.
\newblock {\em Proc. London Math. Soc. (3)}, 26:547--563.

\bibitem[Shiga, 1981]{Shiga1981}
Shiga, T. (1981).
\newblock Diffusion processes in population genetics.
\newblock {\em J. Math. Kyoto Univ.}, 21(1):133--151.

\bibitem[Stein, 1972]{Stein1972}
Stein, C. (1972).
\newblock A bound for the error in the normal approximation to the distribution
  of a sum of dependent random variables.
\newblock In {\em Proceedings of the {S}ixth {B}erkeley {S}ymposium on
  {M}athematical {S}tatistics and {P}robability ({U}niv. {C}alifornia,
  {B}erkeley, {C}alif., 1970/1971), {V}ol. {II}: {P}robability theory}, pages
  583--602. Univ. California Press, Berkeley, Calif.

\bibitem[Stein, 1986]{Stein1986}
Stein, C. (1986).
\newblock {\em Approximate computation of expectations}.
\newblock Institute of Mathematical Statistics Lecture Notes---Monograph
  Series, 7. Institute of Mathematical Statistics, Hayward, CA.

\bibitem[Tavar{{\'e}}, 1984]{Tavare1984}
Tavar{{\'e}}, S. (1984).
\newblock Line-of-descent and genealogical processes, and their applications in
  population genetics models.
\newblock {\em Theoret. Population Biol.}, 26(2):119--164.

\bibitem[Wright, 1949]{Wright1949}
Wright, S. (1949).
\newblock Adaptation and selection.
\newblock {\em Genetics, paleontology and evolution}, pages 365--389.

\end{thebibliography}

\end{document}